\pgfplotsset{compat=1.18}
\newtheorem{theorem}{Theorem}[section]
\newaliascnt{lemma}{theorem}
\newaliascnt{corollary}{theorem}
\newtheorem{corollary}[corollary]{Corollary}
\newaliascnt{proposition}{theorem}
\newtheorem{proposition}[proposition]{Proposition}
\theoremstyle{definition}
\newtheorem{remark}[theorem]{Remark}
\newtheorem*{remark*}{Remark}
\newaliascnt{algorithm}{theorem}
\newtheorem{algorithm}[algorithm]{Algorithm}
\newcommand{\N}{\ensuremath{\mathbb{N}}}
\newcommand{\NN}{\ensuremath{\mathbb{N}_0}}
\newcommand{\T}{\ensuremath{\mathbb{T}}}
\renewcommand{\S}{\ensuremath{\mathbb{S}}}
\newcommand{\M}{\ensuremath{\mathcal{M}}}
\newcommand{\cS}{\ensuremath{\mathcal{S}}}
\newcommand{\cT}{\mathcal{T}}
\newcommand{\cA}{\mathcal{A}}
\newcommand{\cZ}{\mathcal{Z}}
\renewcommand{\P}{\ensuremath{\mathcal{P}}}
\newcommand{\ac}{\ensuremath{\mathrm{ac}}}
\newcommand{\Pac}{\ensuremath{\mathcal{P}_{\ac}}}
\newcommand{\V}{\mathcal{V}}
\newcommand{\W}{\mathcal{W}}
\newcommand{\Z}{\ensuremath{\mathbb{Z}}}
\newcommand{\R}{\ensuremath{\mathbb{R}}}
\newcommand{\X}{\ensuremath{\mathbb{X}}}
\newcommand{\Y}{\ensuremath{\mathbb{Y}}}
\newcommand{\II}{\ensuremath{\mathbb{I}}}
\newcommand{\SO}{\ensuremath{\mathrm{SO}(3)}}
\newcommand{\abs}[1]{\ensuremath{\left\vert#1\right\vert}}
\newcommand{\inn}[1]{\ensuremath{\left\langle#1\right\rangle}}
\newcommand{\dx}{\mathrm{d}}
\newcommand{\ds}{\mathrm{ds}}
\newcommand{\e}{\mathrm{e}}
\renewcommand{\i}{\mathrm{i}}
\newcommand{\zb}[1]{\ensuremath{\boldsymbol{#1}}}
\DeclareMathOperator{\azi}{azi}
\DeclareMathOperator{\zen}{zen}
\DeclareMathOperator*{\argmin}{arg\,min}
\DeclareMathOperator*{\esssup}{ess\,sup}
\DeclareMathOperator{\sym}{sym}
\DeclareMathOperator{\proj}{proj}
\DeclareMathOperator{\prox}{prox}
\DeclareMathOperator{\Id}{Id}
\DeclareMathOperator{\KL}{KL}
\DeclareMathOperator{\sph}{\Phi}
\DeclareMathOperator{\eul}{\Psi}
\DeclareMathOperator{\VSW}{VSW}
\DeclareMathOperator{\SSW}{SSW}
\DeclareMathOperator{\WS}{W}
\DeclareMathOperator{\CDT}{CDT}
\DeclareMathOperator{\sCDT}{cCDT}
\renewcommand{\d}{\, \mathrm{d}}
\renewcommand{\Box}{\hspace*{0ex} \hfill \rule{1.5ex}{1.5ex} \\}
\newcommand{\bigo}{\ensuremath{\mathcal{O}}}
\newcommand{\norm}[1]{\left\lVert #1%\smash{#1} 
  \right\rVert}
\newcommand{\bg}{{\boldsymbol g}}
\newcommand{\bx}{{\boldsymbol x}}
\newcommand{\by}{{\boldsymbol y}}
\newcommand{\bw}{{\boldsymbol w}}
\newcommand{\bQ}{{\boldsymbol Q}}
\newcommand{\bxi}{\boldsymbol\xi}
\newcommand{\tT}{\top}
\begin{document}

\def\sectionautorefname{Section}
\def\subsectionautorefname{Section}

\title{Sliced Optimal Transport on the Sphere}

\author{
	Michael Quellmalz\textsuperscript{1,2}
	\and
	Robert Beinert\textsuperscript{1,2}
	\and
	Gabriele Steidl\textsuperscript{1}
}

\maketitle
\date{\today}

\footnotetext[1]{Institute of Mathematics,
	Technische Universität Berlin,
	Stra{\ss}e des 17.\ Juni 136, 
	10623 Berlin, Germany,
	\ttfamily{\{quellmalz, beinert, steidl\}@math.tu-berlin.de}
    \url{http://tu.berlin/imageanalysis}
	} 

\footnotetext[2]{These authors have contributed equally to the work.}

\begin{abstract}
Sliced optimal transport reduces
optimal transport on multi-dimensional domains
to transport on the line.
More precisely,
sliced optimal transport is the concatenation of 
the well-known Radon transform and
the cumulative density transform,
which analy\-ti\-cal\-ly yields the solutions of the reduced transport problems.
Inspired by this concept,
we propose two adaptions for optimal transport on the 2-sphere.
Firstly, 
as counterpart to the Radon transform, 
we introduce the vertical slice transform,
which integrates along all circles orthogonal to a given direction.
Secondly,
we introduce a semicircle transform,
which integrates along all half great circles with an appropriate weight function.
Both transforms are generalized to arbitrary measures on the sphere.
While the vertical slice transform can be combined with optimal transport on the interval and
leads to a sliced Wasserstein distance restricted to even probability measures,
the semicircle transform is related to optimal transport on the circle
and results in a different sliced Wasserstein distance for arbitrary probability measures.
The applicability of both novel sliced optimal transport concepts on the sphere
is demonstrated by proof-of-concept examples 
dealing with the interpolation and classification of spherical probability measures.
The numerical implementation
relies on the singular value decompositions of both transforms 
and
fast Fourier techniques.
For the inversion with respect to probability measures,
we propose the minimization of an entropy-regularized Kullback--Leibler divergence,
which can be numerically realized using a primal-dual proximal splitting algorithm.
\end{abstract}

%-----------------------------------------------
\section{Introduction}
%-----------------------------------------------
Optimal transport and in particular Wasserstein distances between measures have 
received much attention from a theoretical and practical 
point of view \cite{PeyCut19,San15,Vil03}
and recently became of interest in neural gradient flows \cite{FZTC2022,KSB2022,AHS2023}.
While Wasserstein distances are in general hard to compute, there exist
analytic formulas for optimal transport on the line.
Therefore sliced Wasserstein distances, 
which basically combine the Radon transform in Euclidean spaces
with optimal transport on the line, have become quite popular \cite{RabPeyDelBer12,San15,NRNRNH23}.
In particular, the related	Radon cumulative distribution transform has been applied for interpolation and classification as well as for model reduction \cite{KolParRoh16,GuaLiaDuYin19,RenWolMao21,ShiRub21,BRPP15}. 
The idea behind sliced optimal transport has been generalized and transferred to many related problems.
There exists sliced variants \cite{BC19,BSTK23} of partial optimal transport \cite{Fig10,CAG20},
where only a fraction of mass is transported,
and a sliced version \cite{CTP+21}
of multi-marginal optimal transport \cite{GS98,BLNS22,BBS22},
considering the transport between several measures
instead of only two.
For optimal transport on Riemannian manifolds,
sliced Wasserstein distances based on the push-forward of the eigenfunctions 
of the Laplacian have been proposed in \cite{RusMaj20}.
Especially for shape and graph analysis,
sliced optimal transport has been transferred to
the Gromov--Wasserstein setting \cite{VFC+19}, 
which more generally defines a metric 
between metric measure spaces \cite{Mem11,Stu20,BeBeSt22}.
Differently from the Wasserstein formulation with its analytic solution,
the Gromov--Wasserstein transport on the line is more involved \cite{BCS22,DLV23}.

In this paper, 
we transfer the slicing approach to
optimal transport on the two-dimensional sphere.
Spherical optimal transport has been intensely studied in recent years.
For instance,
the problem can be solved using a Monge--Ampère type equation \cite{HamTur22,McrCotBud18,WelBroBudCul16}
or a variational framework \cite{CuiQi19}.
The regularity of optimal maps has been investigated in \cite{Loe10}.
Spherical Wasserstein barycenters have been computed 
using a stochastic projected subgradient method \cite{StaClaSolJeg17} 
and have been estimated on random graphs \cite{TheKer22}.

To introduce slicing frameworks on the sphere,
we do not follow the Laplacian approach in \cite{RusMaj20},
but focus on spherical counterparts of the Radon transform.
A well-known one is the Funk--Radon transform \cite{Fun13,Hel11,LoRiSpSp11,QueWeiHubErc23}, which takes integrals along all great circles.
Integration along all circles of a fixed radius were studied in \cite{Sch69,Rub00}.
Further Radon-type transforms were considered based on intersections with planes containing a fixed point inside the sphere \cite{Pal16,Sal17,Qu17,Que20},
on the sphere \cite{AbDa93,Rub22},
and outside the sphere \cite{AgRu20}.
Moreover, 
transforms including derivatives were proposed in \cite{MaMaOd01,QuHiLo18}.
However, in the context of sliced optimal transport,
we require that
probability density functions on the sphere are mapped
to a family of probability density functions on one-dimensional domains.
For this purpose, 
we consider two specific spherical transforms, namely 
the vertical slice transform
and the normalized semicircle transform.

\begin{figure}
  \centering
  \begin{subfigure}[t]{.49\textwidth} \centering
  \includegraphics[width=.65\textwidth]{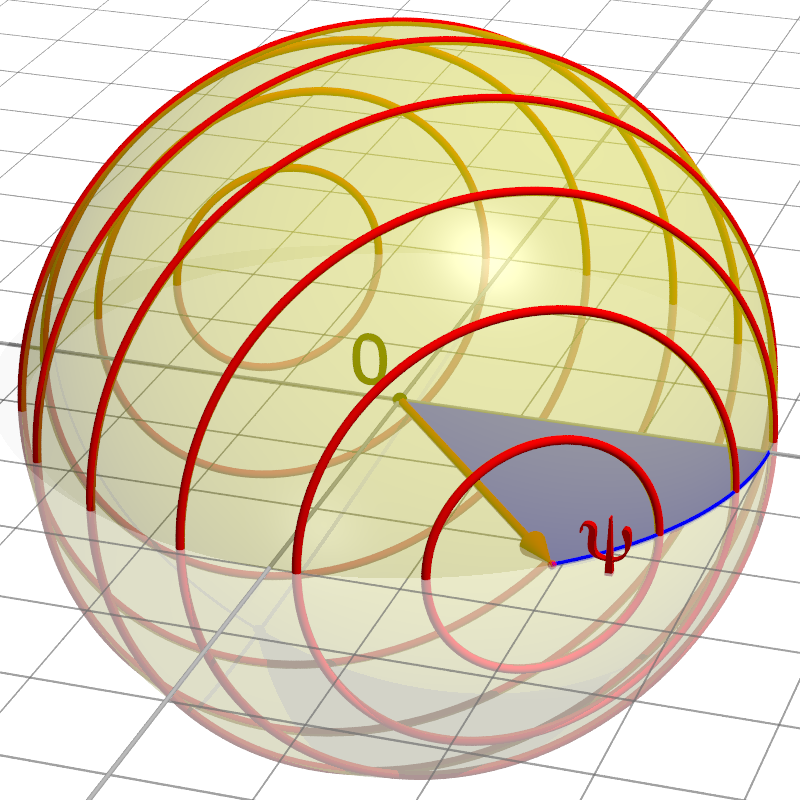}  
  \caption{
    Vertical slices for a fixed direction.}
     \label{fig:vertical_slice_1} 
   \end{subfigure}
  \begin{subfigure}[t]{.49\textwidth} \centering
  \includegraphics[width=.65\textwidth]{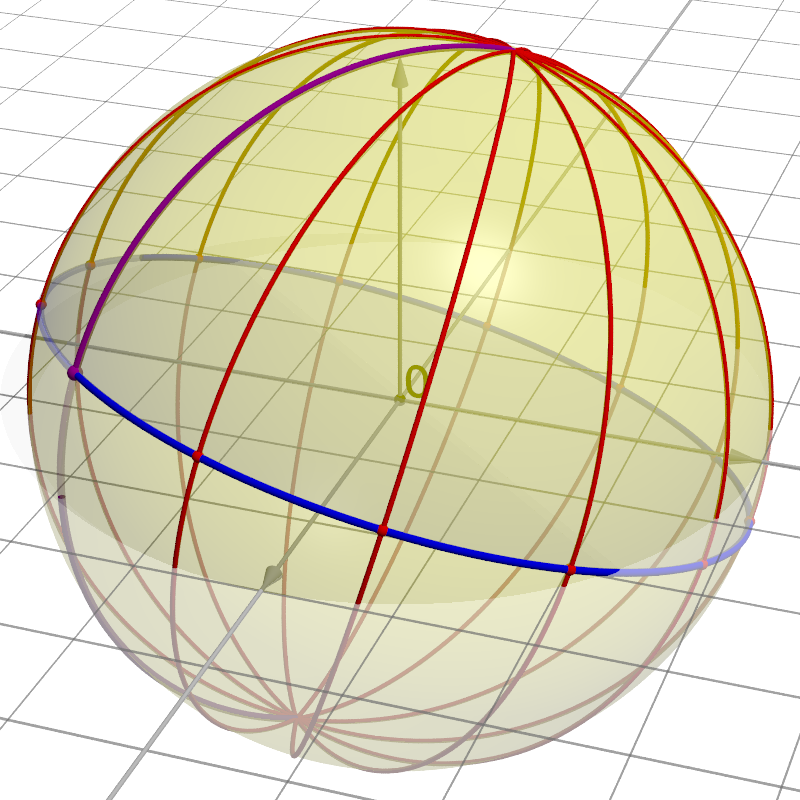}  
  \caption{
    Semicircles (red) starting in a fixed point. 
    }
   \label{fig:vertical_slice_2} 
   \end{subfigure}
   \caption{Areas of integration of the spherical transforms.}
\end{figure}

The vertical slice transform was first considered in \cite{GiReSh94} and applied in \cite{ZaSc10,HiQu15circav} for photoacoustic tomography.
The generalization to higher dimensions is due to \cite{Rub19}.
The basic idea is to take means along parallel circles, 
see \autoref{fig:vertical_slice_1},
which gives a probability density function on an interval. 
The process is then repeated for further directions.
Geometrically, the areas of integration for a fixed direction can be imagined like an \enquote{egg cutter} applied to the sphere.
We generalize the vertical slice transform to probability measures and use it to define a vertical sliced Wasserstein distance.
Radon transforms of measures have been considered in the context of a dual fibration,
cf.\ \cite{GelShm83,Pal95} and \cite[Chap.\ 2, § 2]{Hel11},
where they are defined via duality.
We will see that our definition via the push-forward of measures
can be also derived from that point of view.

An (unnormalized) semicircle transform 
was examined in \cite{Gro98,HiPoQu18}.
It takes integrals along semicircles starting in a fixed point,
see \autoref{fig:vertical_slice_2},
and yields a function defined on the one-dimensional unit circle.
The process is then repeated for further starting points.
This transform has been combined with optimal transport on the circle
to obtain a sliced Wasserstein distance \cite{Bon22}.
However, 
the crucial point is here that
the unnormalized semicircle transform does not 
map probability density functions
to probability density functions,
meaning that optimal transport techniques on the circle
cannot be applied.
In the numeric part of \cite{Bon22},
the authors restrict themselves to point measures, 
which then are projected onto great circles.
This approach corresponds to 
an appropriately \emph{normalized} semicircle transform instead, where the integrand is multiplied with a certain weight function.
In this paper, 
we introduce and study this normalized semicircle transform
in a rigours manner to obtain a semicircular sliced Wasserstein distance.

\paragraph{Main contributions}
\begin{itemize}
    \item We give rigorous definitions of 
    the vertical slice and the normalized semicircle transform, 
    which are originally considered only for functions,
    and generalize them to measures using an appropriate push-forward.
    For absolutely continuous measures,
    the generalized and initial definitions coincide in the sense that
    merely the density function has to be transformed.
    Furthermore, probability measures are transformed to probability measures.
    \item We prove a singular value decomposition of the normalized semicircle transform, which provides an approach for numerical computations. 
    Moreover, the singular value decompositions of the vertical slice and the
    normalized semicircle transform allow the inversion via their Moore--Penrose pseudoinverses.
    \item We define sliced Wasserstein distances on the sphere based on both transform.
    We show that the normalized semicircle transform is injective for all measures,
    and hence the sliced Wasserstein distance indeed fulfills the properties of a metric.
    Furthermore, the vertical sliced Wasserstein distance is a metric for even measures on the sphere.
    \item We propose a Tikhonov-type regularization which minimizes a variational model consisting of the entropy-regularized Kullback--Leibler divergence.
    This ensures that the inverse is a probability measure and in particular non-negative.
    Further, this allows to compute a sliced CDT interpolation between spherical probability measures to approximate Wasserstein barycenters.
\end{itemize}

\paragraph{Outline of the Paper}
We start in \autoref{sec:prelim} with the necessary preliminaries on optimal transport, 
the unit sphere and the rotation group on $\R^3$.
Then, 
we introduce the two counterparts of the Radon transform on $\S^2$, namely the
vertical slice transform in \autoref{sec:vert_slice} and the normalized semicircle transform in \autoref{sec:semicircle}.
First, we define the transforms for functions and
derive their adjoint operators and
singular value decompositions on $L^2(\S^2)$. 
In order to combine these transforms with optimal transport on the interval and the circle respectively,
we have to enlarge their definitions to measure spaces 
which we have not found in a mathematically rigorous form in the literature.
\autoref{sec:SOT} connects the above transforms with optimal transport to
introduce spherical sliced Wasserstein distances
for measures on the sphere.
\autoref{sec:disc-and-inv} deals with the discretization of the spherical transforms and their inversion, which is an ill-posed problem.
For an approximate inversion, we can use the truncated Moore--Penrose pseudoinverse.
However, when dealing with probability density functions, this inversion does not guarantee the non-negativity of
the reconstructed function.
Therefore, we suggest another reconstruction 
which minimizes a variational model consisting of an entropy-regularized Kullback--Leibler divergence, see \autoref{sec:reg}. 
The actual minimization can be done by a primal-dual splitting.
Numerical proof-of-concept results are reported in \autoref{sec:numerics},
where we provide two kinds of experiments.
First, we show in \autoref{sec:int-meas} that Wasserstein barycenters on the sphere
can be  approximated using sliced Wasserstein transforms
and Wasserstein interpolation on the interval and the circle respectively.
These results require in particular the inversion of the 
sliced spherical transforms.
Second, we demonstrate by a synthetic example that the binary classification of different measures  
is in principle possible in \autoref{sec:classification}.

% -----------------------------------------------
\section{Preliminaries} \label{sec:prelim}
%-----------------------------------------------
In this section, we first provide the notation and necessary preliminaries 
on optimal transport, in particular on the interval and the circle.
Then, we recall basic facts about the unit sphere and the rotation group on $\mathbb R^3$. 

%------------------------------------------------------
\subsection{Measures and Optimal Transport}
%------------------------------------------------------
Let $\X$ be a compact metric space with metric $d\colon \X \times \X \to \R$,
and let $\mathcal B(\X)$ be the Borel $\sigma$-algebra induced by $d$.
By $\M(\X)$,
we denote the Banach space of signed, finite measures, and 
by $\P(\X)$ the subset of probability measures
on $\X$. 
The pre-dual space of $\M(\X)$ is $C(\X)$.
Let $\Y$ be another compact metric space and $T\colon \X \to \Y$
be measurable. For $\mu \in \M(\X)$, 
we define the \emph{push-forward measure} 
$T_\# \mu \coloneqq \mu \circ T^{-1} \in \M(\mathbb Y)$.
For any measure $\pi \in \M(\X \times \Y)$ 
with first marginal $\mu \in \M(\X)$,
i.e.,
$\pi(B \times \Y) = \mu(B)$ for all $B \in \mathcal B(\X)$,
we call a collection of measures $\pi_x \in \M(\Y)$, $x \in \X$,
a \emph{disintegration family} if
\begin{equation}
    \int_{\X \times \Y} f(x,y) \d \pi(x,y)
    =
    \int_\X \int_\Y f(x,y) \d \pi_x(y) \d \mu(x)
\end{equation}
for all measurable functions $f$ on $\X \times \Y$.

The $p$-\emph{Wasserstein distance}, $p\in[1,\infty)$, of $\mu,\nu \in \mathcal P(\mathbb X)$ is given by
\begin{equation} \label{eq:Wp}
    \WS_p^p(\mu,\nu)
    \coloneqq
    \min_{\pi\in\Pi(\mu,\nu)} \int_{\mathbb X^2} d^p(x,y) \d \pi (x,y),
\end{equation}
with
$
\Pi(\mu,\nu) \coloneqq \{\pi \in\M(\mathbb X \times \mathbb X): 
\pi(B \times \mathbb  X) = \mu(B),
\pi(\mathbb  X\times B) = \nu(B)
\ \text{for all }  B \in \mathcal B(\mathbb X)\}
$. 
It defines a metric on $\mathcal P(\mathbb X)$.
The metric space $\mathcal P^p(\mathbb X) \coloneqq (\mathcal P(\mathbb X),W_p)$
is called $p$-Wasserstein space and, in case $p=2$, just Wasserstein space.
The above Wasserstein distance is just a special case of the more general optimal transport problem,
where $d^p(x,y)$ can be replaced by a more general cost function $c(x,y)$.
For $\delta\in[0,1]$, 
the \emph{$p$-Wasserstein barycenter} between $\mu,\nu\in\P^p(\X)$ is the minimizer of 
\begin{equation} \label{eq:W-bary}
    \min_{\omega\in\P(\X)}
    (1-\delta)\, \WS_p^p(\mu,\omega)
    +
    \delta\, \WS_p^p(\nu,\omega),
\end{equation}
see \cite{AguCar11}.
Note that the Wasserstein barycenter between absolutely continuous measures is unique,
cf.\ \cite{KimPas17}.

\paragraph{Optimal Transport on the Interval}

If $\X$ is the \emph{unit interval} $\II \coloneqq [-1,1]$ with the distance $d(x,y) = \abs{x-y}$, the optimal transport between two probability measures $\mu,\nu\in \mathcal P(\II)$ can be computed easily \cite{PeyCut19,San15,Vil03} 
using the \emph{cumulative distribution function} 
$F_\mu(x) \coloneqq \mu([-1,x])$, $x\in\II$,
which is non-decreasing and right-continuous.
Its pseudoinverse, 
the \emph{quantile function}
$
F_\mu^{-1}(r) \coloneqq \min\{ x\in\II: F_\mu(x)\ge r\}$,
$r\in[0,1]$,
is non-decreasing and left-continuous.
The measure $\mu$ can be recovered by $\mu = (F_\mu^{-1})_\# \sigma_{[0,1]}$,
where $\sigma_{[0,1]}$ denotes the Lebesque measure on $[0,1]$.
The $p$-Wasserstein distance \eqref{eq:Wp} between $\mu,\nu \in \P^p(\II)$
now equals 
$\WS_p(\mu,\nu)
=
\lVert F_\mu^{-1} - F_\nu^{-1} \rVert_{L^p([0,1])}$.
Moreover,
if $\mu \in \Pac(\II)$,
where $\Pac(\II)$ denotes the probability measures that are
\emph{absolutely continuous} with respect to the Lebesgue measure,
then the optimal transport plan $\pi$ in \eqref{eq:Wp}
is uniquely given by 
\begin{equation}  
    \pi = (\Id, T^{\mu,\nu})_\# \mu
    \quad\text{with}\quad 
    T^{\mu,\nu}(x) \coloneqq F_\nu^{-1}(F_\mu(x)),
    \quad
    x \in \II.
\end{equation}

Based on the \emph{optimal transport map} $T^{\mu,\nu}$,
the Wasserstein space $\P^p(\II)$ can be isometrically embedded 
into $L^p_\omega(\II)$ with $\omega \in \Pac(\II)$
\cite{KolParRoh16, PaKoSo18, BeBeSt22},
where $L^p_\omega(\II)$ consists of all $p$-integrable functions with respect to $\omega$.
More precisely,
for the reference measure $\omega \in \Pac(\II)$,
the \emph{cumulative distribution transform} (CDT)
is defined by 
$\CDT_\omega \colon \P^p(\II) \to L^p_\omega(\II)$ with
\begin{equation} \label{eq:cdt}
    \CDT_{\omega}[\mu] (x) 
    \coloneqq 
    (T^{\omega,\mu} - \Id)(x)
    =
    \bigl(F_{\mu}^{-1} \circ F_{\omega} \bigr) (x) - x,
    \quad
    x\in\II,
\end{equation}
and we especially have
$\WS_p(\mu,\nu) = \lVert \CDT_\omega[\mu] - \CDT_{\omega}[\nu] \rVert_{L^p_\omega(\II)}$.
The CDT is in fact a mapping from $\P^p(\II)$ into the tangent space of $\P^p(\II)$ at $\omega$,
see \cite[§~8.5]{AGS05}.
Due to the relation to the optimal transport map,
the CDT can be inverted by $\mu = \CDT^{-1}_\omega[h] \coloneqq (h + \Id)_\# \omega$ 
{for $h = \CDT_\omega[\mu]$}. 
If $\mu,\omega \in \Pac(\II)$ possess the density functions $f_\mu > 0$, $f_\omega > 0$,
then, by the transformation formula for push-forward measures, $f_\mu$ can be recovered by
\begin{equation} \label{eq:icdt}
        f_\mu(x)
        =
        \left( g^{-1} \right)'(x)\, f_\omega(g^{-1}(x))
        \quad
        \text{with} \quad
        g(x) = \CDT_\omega[\mu](x) + x
        ,\quad
        x \in \II.
\end{equation}
For $\mu,\nu \in \P(\II)$,
and an arbitrary reference measure $\omega \in \Pac(\II)$,
the 2-Wasserstein barycenter \eqref{eq:W-bary} has the form \begin{equation}
    \label{eq:cdt-bary}
    \CDT^{-1}_{\omega}\left(
    \delta \CDT_\omega [\nu] + (1-\delta) \CDT_\omega[\mu] \right),
\end{equation}
see \cite{KolParRoh16}.
In particular for $\omega = \mu$,
we have by \eqref{eq:cdt} that $\CDT_\mu[\mu](x) = F_\mu^{-1}(F_\mu(x))-x = 0$ and therefore the barycenter \eqref{eq:cdt-bary} becomes
\begin{equation}    \label{eq:cdt-bary_1}
    \CDT^{-1}_{\mu}\left(
    \delta \CDT_\mu [\nu] \right).
\end{equation}

\paragraph{Optimal Transport on the Circle}

On the \emph{circle} $\T \coloneqq \R/(2\pi\Z)$ 
equipped with the metric $d(x,y) \coloneqq \min_{k\in\Z} \abs{x-y+2\pi k}$,
the optimal transport can be computed in a similar manner
by incorporating the periodicity. 
Following \cite{DelSalSob10,RabDelGou11},
we define the \emph{(extended) cumulative distribution function} by
$\tilde F_\mu(x) \coloneqq \mu([0,x])$ for $x \in [0,2\pi]$
and extend it to $\mathbb R$ by the convention $\tilde F_\mu(x + 2\pi) \coloneqq \tilde F_\mu(x) + 1$.
Its pseudoinverse, 
the \emph{(extended) quantile function},
is defined as
$\tilde F_\mu^{-1}(r) \coloneqq \min\{ x\in\R: \tilde F_\mu(x)\ge r\}$ for
$r\in \R$. 
Note that $\tilde F$ and $\tilde F^{-1}$ are mappings defined on entire $\R$.
The $p$-Wasserstein distance between $\mu,\nu \in \P(\T)$ is given by
\begin{equation} \label{eq:Wp-circle}
    W_p^p(\mu,\nu)
    =
    \min_{\theta\in\R} 
    \int_{0}^{1} \lvert\tilde F_\mu^{-1}(r) - (\tilde F_\nu - \theta)^{-1}(r) \rvert^p \d r,
\end{equation}
where $(\tilde F_\nu - \theta)^{-1}$ is the pseudoinverse of the shifted cumulative distribution function
\cite{RabDelGou11}.
For $\mu \in \Pac(\T)$,
each minimizer $\theta$ of \eqref{eq:Wp-circle}
yields an optimal transport plan
\begin{equation}
    \pi = (\Id, \iota(\tilde T^{\mu,\nu}))_\# \mu
    \quad\text{with}\quad
    \tilde T^{\mu,\nu}(x) \coloneqq (\tilde F_\nu - \theta)^{-1} (\tilde F_\mu(x)),
    \quad x \in [0,2\pi),
\end{equation}
where $\iota \colon \R \to \T$ denotes the canonical projection from the line to the circle.
Note that $\tilde T^{\mu,\nu}(x) \in \R$ is the representative of 
$\iota(\tilde T^{\mu,\nu}(x)) \in \T$ with
\begin{equation}
    d(x, \iota(\tilde T^{\mu,\nu}(x)))
    =
    \lvert x - \tilde T^{\mu,\nu}(x) \rvert.
\end{equation}
If $p>1$ and $\mu,\nu\in\Pac(\T)$,
the minimizer $\theta$ of \eqref{eq:Wp-circle} is unique. This follows by the proof of \cite[Lem.~5.2]{DelSalSob10}, where it is shown that the objective of \eqref{eq:Wp-circle} is convex in $\theta$, but the argument even implies strict convexity.
In analogy to \eqref{eq:cdt}, we define
for $p\in(1,\infty)$, the \emph{circular CDT} (cCDT) of $\mu\in\Pac(\T)$ with reference measure $\omega\in\Pac(\T)$ by
$\sCDT_\omega \colon \P^p(\T) \to L^p_\omega(\T)$ with
\begin{equation} 
    \label{eq:cdt-tc}
    \sCDT_{\omega}[\mu] (x) 
    \coloneqq 
    (\tilde T^{\omega, \mu} - \Id)(x)
    =
    \bigl((\tilde F_{\mu}-\theta_{\omega,\mu})^{-1} \circ \tilde F_{\omega}\bigr) (x) - x,
    \quad x\in[0,2\pi),
\end{equation}
where $\tilde T^{\omega,\mu}$ is the optimal transport plan and 
$\theta_{\omega,\mu}$
the minimizer of \eqref{eq:Wp-circle}.
Note that the cCDT is no longer an isometric embedding.
The cCDT can be inverted by 
$\mu = \sCDT^{-1}_\omega[h] 
\coloneqq (\iota \circ(h + \Id))_\#\omega$ for $h = \sCDT_\omega[\mu]$.
If $\mu,\omega \in \Pac(\T)$ have densities $f_\mu > 0$, $f_\omega > 0$,
the density $f_\mu$ can be recovered similarly to \eqref{eq:icdt} via
\begin{equation} \label{eq:iccdt}
        f_\mu(x)
        =
        \left( g^{-1}\right)'(x)\, f_\omega(g^{-1}(x))
        \quad
        \text{with} \quad
        g(x) = \iota \left(\sCDT_\omega[\mu](x) + x \right)  ,\quad
        x \in \T.
\end{equation}
In analogy to \eqref{eq:cdt-bary} with $\omega = \mu$,
we interpolate between the measures $\mu, \nu \in \Pac(\T)$ by
\begin{equation} \label{eq:W2-bary-circle}
    \sCDT^{-1}_{\mu}[\delta \sCDT_{\mu}[\nu]].
\end{equation}

%---------------------------------------------
\subsection{Sphere and Rotation Group}
%---------------------------------------------
\paragraph{Unit  Sphere}
The two-dimensional unit sphere is defined as
$\S^{2} \coloneqq \{\bx \in\R^3: \norm{\bx}=1\}$.
The canonical unit vectors are henceforth denoted by $\zb e^j$,
$j=1,2,3$.
Points $\bxi \in \S^2$ can be parameterized in 
\emph{spherical coordinates} 
\begin{equation} \label{eq:xi}
  \bxi = \sph(\varphi,\vartheta)
  \coloneqq
  \left( \cos\varphi \,\sin\vartheta,\,
  \sin\varphi \,\sin\vartheta ,\,
  \cos\vartheta\right) \in \S^2
  ,\quad \varphi\in\T,\ \vartheta\in[0,\pi].
\end{equation}
The restriction
$\sph\colon \left(\T\times (0,\pi) \right) \cup \left(\{0\}\times\{0,\pi \} \right) \to \mathbb S^2$ is a bijective mapping.
We denote the first and second component of this restriction as \emph{azimuth angle} $\azi(\bxi)$ and \emph{zenith angle} $\zen(\bxi)$, respectively,
which are uniquely given by 
\begin{equation} \label{eq:phi}
  \azi(\sph(\varphi,\vartheta)) = \varphi
  \quad\text{and}\quad
  \zen(\sph(\varphi,\vartheta)) = \vartheta
\end{equation}
for all $(\varphi,\vartheta) \in \left(\T\times (0,\pi) \right) \cup \left(\{0\}\times\{0,\pi \} \right)$.
The \emph{surface measure} $\sigma_{\S^2}$ on the sphere is given by
\begin{equation} \label{eq:int_S}
    \int_{\S^2} f(\bxi) \d\sigma_{\S^2}(\bxi)
    =
    \int_{0}^{\pi} \int_{\T} f(\sph(\varphi,\vartheta))\, \sin\vartheta \d\varphi \d \vartheta.
\end{equation}
Normalizing $\sigma_{\S^2}$
yields the \emph{uniform measure} $u_{\S^2} \coloneqq (4\pi)^{-1} \sigma_{\S^2}$.
We denote by $L^{p}(\S^2)$, $p \in [1,\infty]$, the Banach space
of all (equivalence classes of) $p$-integrable functions on $\S^2$,
where we use the above surface measure.

We define the \emph{spherical harmonics} of degree $n\in\NN$ and order $k=-n,\dots,n$ by
\begin{equation} \label{eq:Y}
    Y_n^k(\sph(\varphi,t))
    \coloneqq
    \sqrt{\frac{2n+1}{4\pi} \frac{(n-k)!}{(n+k)!}}\, P_n^k(\cos\vartheta)\, \e^{\i k \varphi},
\end{equation}
where $P_n^k\colon [-1,1] \to \R$ denotes the \emph{associated Legendre functions}
defined by
\begin{equation} \label{eq:Pnk}
    P_n^k(t)
    \coloneqq
    \frac{(-1)^k}{2^n n!} (1-t^2)^{\frac{k}{2}} 
		\frac{ \text{d}^{n+k} (t^2-1)^n}{ \text{ d} t^{n+k}}
    ,\qquad \ n\in\N_0,\ k\in\{0,\dots,n\}
\end{equation}
and 
\begin{equation} \label{eq:Pn-k}
P_n^{-k} 
\coloneqq
(-1)^k \frac{(n-k)!}{(n+k)!} P_n^k .
\end{equation}
The spherical harmonics $\{Y_n^k:  n\in\NN,\ k= -n, ,\dots,n\}$ form an orthonormal basis
of $L^2(\mathbb S^2)$.
Finally, the \emph{Sobolev space} $H^s(\S^2)$ with  $s\ge 0$,
is defined as the completion of $C^\infty(\S^{2})$ with respect to the norm 
    \begin{equation} \label{eq:Hs-norm}
        \norm{f}_{H^s(\S^{2})}
        \coloneqq
        \sum_{n=0}^{\infty} \left(n+\tfrac{1}{2}\right)^{2s}
        \sum_{k=-n}^{n}
         \abs{\langle f,Y_n^k\rangle_{L^2(\S^{2})} }^2.
    \end{equation}

\paragraph{Rotation Group}
Next, we are interested in the \emph{rotation group}
$$\SO \coloneqq \{\bQ\in\R^{3\times3}: \bQ^\top \bQ=I, \det(\bQ)=1\}.$$
Any matrix in $\SO$ has an \emph{Euler angle} parameterization
\begin{equation} \label{eq:Q}
\eul(\alpha,\beta,\gamma)
\coloneqq
\zb R_3(\alpha) \zb R_2(\beta) \zb R_3(\gamma) \in\SO,
\qquad \alpha,\gamma\in\T,\ \beta\in[0,\pi],
\end{equation}
where 
\begin{equation} \label{eq:R3R2}
  \zb R_3(\alpha) \coloneqq
  \begin{pmatrix}
    \cos\alpha &-\sin\alpha &0\\
    \sin\alpha &\cos\alpha &0\\
    0&0&1
  \end{pmatrix},
  \;
  \zb R_2(\beta) \coloneqq
  \begin{pmatrix}
    \cos\beta&0 &\sin\beta \\
    0&1&0\\
    -\sin\beta&0 &\cos\beta
  \end{pmatrix}.
\end{equation}
The rotation group $\SO$ can be identified with the product $\S^2\times\T$ via the bijection 
$$
\S^2\times\T \ni (\bxi,\gamma) \mapsto \eul(\azi(\bxi), \zen(\bxi),\gamma) \in \SO,
$$
cf.\ \cite{GrPo09}.
In Euler angles, the  rotationally invariant measure $\sigma_{\SO}$ on $\SO$ is given by
\begin{align} \label{eq:SO_int}
    \int_{\SO} f(\bQ) \d\sigma_{\SO}(\bQ)
    &= 
    \int_{0}^{2\pi} \int_{0}^{\pi} \int_{0}^{2\pi}
    f(\eul(\alpha,\beta,\gamma)) \sin(\beta) \, \d\alpha \d\beta \d\gamma
    \\
    &= 
    \int_{\T} \int_{\S^2}
    f(\eul(\alpha,\beta,\gamma))
    \d\sigma_{\S^2} (\sph(\alpha,\beta)) \d\gamma.
\end{align}
The uniform measure on $\SO$ is $u_{\SO} \coloneqq (8\pi^{2})^{-1} \sigma_{\SO}$.

The \emph{rotational harmonics} or \emph{Wigner D-functions} 
$D_n^{k,j}$ of degree $n\in\NN$ and orders $k,j\in\{-n,\dots,n\}$ are
defined by 
\begin{equation} \label{eq:D}
D_n^{k,j} (\eul(\alpha,\beta,\gamma))
\coloneqq \e^{-\mathrm{i}k\alpha}\, d_n^{k,j} (\cos\beta)\, \e^{-\mathrm{i}j\gamma},
\end{equation}
where the \emph{Wigner d-functions} are given for $t\in[-1,1]$ by 
\begin{align*}
 d_n^{k,j}(t) 
&\coloneqq
\frac{(-1)^{n-j}}{2^n} 
\sqrt\frac{(n+k)!(1-t)^{j-k}}{(n-j)!(n+j)!(n-k)!(1+t)^{j+k}}
\frac{\dx^{n-k}}{\dx t^{n-k}} \frac{(1+t)^{n+j}}{(1-t)^{-n+j}},
\end{align*}
see \cite[chap.~4]{Varsha88}.
The rotational harmonics are the matrix entries of the left angular representations of $\SO$, i.e.,
\begin{equation} \label{eq:Y_D}
    Y_n^k(\bQ^\tT \bxi)
    =
    \sum_{j=-n}^{n} D_n^{j,k}(\bQ) Y_n^j(\bxi)
    ,\quad  \bQ\in\SO,\ \bxi\in\S^2.
\end{equation}
They satisfy the orthogonality relation
\begin{equation} \label{eq:D_ortho}
    \int_{\SO} D_n^{j,k}(\bQ) D_{n'}^{j',k'}(\bQ) \d \bQ
    =
    \frac{8\pi^2}{2n+1} \delta_{n,n'} \delta_{k,k'} \delta_{j,j'},
\end{equation}
for all $n,n'\in\N_0,$ $j,k=-n,\dots,n,$ and $j',k'=-n',\dots,n'$, 
where $\delta$ denotes the Kronecker symbol.
Then $\{ \big(\frac{2n+1}{8\pi^2}\big)^\frac12 D_n^{j,k}: n \in \mathbb N_0, j,k = -n,\ldots,n\}$ form an orthonormal basis
of $L^2(\SO)$.

Finally, the \emph{Sobolev space} $H^s(\SO)$ with $s\ge0$ 
is defined as the completion of $C^\infty(\SO)$ with respect to the Sobolev norm
\begin{equation}
    \norm{g}_{H^s(\SO)}^2
    \coloneqq
    \sum_{n=0}^\infty \left( n+\tfrac12\right)^{2s} \sum_{j,k=-n}^{n}
    \frac{8\pi^2}{2n+1} | \langle g, D_n^{j,k}\rangle|^2.
   \end{equation}

%---------------------------------------------------------------------
\section{Vertical Slice Transform}\label{sec:vert_slice}
%---------------------------------------------------------------------
%----------------------------------------------------------------------
\subsection{Vertical Slice Transform of Functions} 
% ---------------------------------------------------------------------
In analogy to the Radon transform,
the main idea
behind the vertical slice transform
is to integrate a given function $f \colon \S^2 \to \R$
along parallel vertical slices.
To describe these slices mathematically,
we define the \emph{slicing operator} $\cS_\psi \colon \S^2 \to \II$ for any fixed $\psi \in \T$ by
\begin{equation}
  \cS_\psi (\zb \xi)
  \coloneqq
  \langle \bxi, (\cos \psi, \sin \psi,0)^\tT \rangle
  =  
  \cos(\psi) \, \xi_1 + \sin(\psi) \, \xi_2,
\end{equation}
and the corresponding \emph{slice}/\emph{circle} by
\begin{equation}
  C_{\psi}^{t} 
  \coloneqq
  \cS_\psi^{-1}(t)
  =
  \{\zb \xi \in \mathbb S^2: \cS_\psi (\zb \xi) = t\},
  \quad
  \ t \in \II.
\end{equation} 
The slice $C_{\psi}^{t}$ is the intersection of $\mathbb S^2$
and the plane with normal $(\cos \psi, \sin \psi,0)^\tT$
and distance~$t$ from the origin,
An illustration of the slices $C_\psi^t$ for fixed $\psi$ is given in \autoref{fig:vertical_slice_1}.
The \emph{vertical slice transform} $\mathcal V$ is defined by
\begin{equation} \label{eq:V}
  \mathcal V f(\psi,t)
  \coloneqq
	  \frac{1}{2 \pi \sqrt{1-t^2}} 
  \int_{C_{\psi}^{t}}
  f(\zb\xi)
  \, \ds (\bxi),
  \qquad \psi\in\T,\ t\in (-1,1),
\end{equation}
where $\mathrm{d} \text{s}$ denotes the arc-length on $C_{\psi}^{t}$.
For $t = \pm 1$,
the \emph{vertical slice transform} is
\begin{equation}
  \mathcal V f (\psi,1) \coloneqq f(\cos\psi,\sin\psi,0)
  \quad\text{and}\quad
  \mathcal V f (\psi,-1) \coloneqq f(-\cos\psi,-\sin\psi,0).
\end{equation}
For fixed $\psi \in \T$,
we define the (normalized) restrictions
\begin{equation}
  \label{eq:Vs}
  \V_\psi\coloneqq 2\pi \, \V(\psi,\cdot).
\end{equation}
This corresponds to projecting the mean values of $f$ along $C_{\psi}^{t}$ to $t \in \II$. 
For an illustration see again \autoref{fig:vertical_slice_1}.
The different normalizations of $\V$ and $\V_\psi$ are chosen
with respect to the later generalization to measures
and ensure that
density functions are transformed to density functions
by $\V$ and $\V_\psi$.
By the following proposition,
both operators are well defined almost everywhere.

\begin{proposition}
  \label{prop:V_norm}
  Let $1 \le p \le \infty$.
  For every $f \in L^p(\S^2)$,
  it holds
  \begin{equation} \label{eq:V_int}
    \int_{\II} \V_\psi f(t) \, \dx t
    =
    \int_{\S^2} f(\zb\xi) \, \dx\sigma_{\S^2}(\zb\xi)
    \quad\text{and}\quad
    \int_{\T}\int_{\II} \V f(\psi, t) \, \dx t \d \psi
    =
    \int_{\S^2} f(\zb\xi) \, \dx\sigma_{\S^2}(\zb\xi).
  \end{equation}
  Let $\psi \in \T$.
  The operators
  $\V_\psi \colon L^p(\S^2) \to L^p(\II)$
  and
  $\V \colon L^p(\S^2) \to L^p(\T \times \II)$
  are bounded  with
  \begin{equation}
    \lVert \V_\psi \rVert_{L^p \to L^p} = (2\pi)^{1-1/p} 
    \quad\text{and}\quad
    \lVert \V \rVert_{L^p \to L^p} = 1.
  \end{equation}
  Moreover,
  it holds
  $\V_\psi \colon C(\S^2) \to C(\II)$
  and
  $\V \colon C(\S^2) \to C(\T \times \II)$.
\end{proposition}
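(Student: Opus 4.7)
The plan is to reduce the computation of $\V_\psi f$ to spherical coordinates in a rotated frame aligned with the slicing direction. I choose a rotation $\bQ_\psi \in \SO$ with $\bQ_\psi \zb e^3 = (\cos\psi,\sin\psi,0)^\tT$, for instance $\bQ_\psi = \eul(\psi,\pi/2,0)$. Then $\cS_\psi(\bxi) = (\bQ_\psi^\tT \bxi)_3$, so the slice $C_\psi^t$ corresponds, in the rotated coordinates $\bxi = \bQ_\psi \sph(\varphi',\vartheta')$, to the circle $\vartheta' = \arccos t$. A direct arc-length computation gives $\ds = \sqrt{1-t^2}\,\dx\varphi'$, so that
\begin{equation*}
\V_\psi f(t) = \int_0^{2\pi} f\bigl(\bQ_\psi \sph(\varphi',\arccos t)\bigr)\,\dx\varphi', \qquad t\in(-1,1).
\end{equation*}
At $t=\pm 1$ the circle collapses to the single point $\pm(\cos\psi,\sin\psi,0)^\tT$, and this formula is consistent with the boundary definition.

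From this representation, the first identity in \eqref{eq:V_int} follows by the substitution $t = \cos\vartheta'$, $\dx t = -\sin\vartheta'\,\dx\vartheta'$: the product measure $\dx\varphi'\,\dx t$ on $(0,2\pi)\times(-1,1)$ becomes the spherical surface measure $\sin\vartheta'\,\dx\varphi'\,\dx\vartheta'$ on $(0,2\pi)\times(0,\pi)$. Rotational invariance of $\sigma_{\S^2}$ then absorbs $\bQ_\psi$ and yields $\int_\II \V_\psi f\,\dx t = \int_{\S^2} f\,\dx\sigma_{\S^2}$. Integrating over $\psi\in\T$ and using the normalization $\V(\psi,\cdot) = (2\pi)^{-1}\V_\psi$ gives the second identity.

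For the $L^p$ bound on $\V_\psi$, I apply Jensen's inequality to the average $(2\pi)^{-1}\V_\psi f$ to obtain
\begin{equation*}
|\V_\psi f(t)|^p \le (2\pi)^{p-1} \int_0^{2\pi} \bigl|f\bigl(\bQ_\psi \sph(\varphi',\arccos t)\bigr)\bigr|^p\,\dx\varphi'.
\end{equation*}
Integrating in $t$ and applying the identity just proved to $|f|^p$ yields $\lVert\V_\psi f\rVert_{L^p(\II)}^p \le (2\pi)^{p-1}\lVert f\rVert_{L^p(\S^2)}^p$, i.e.\ $\lVert\V_\psi\rVert_{L^p\to L^p} \le (2\pi)^{1-1/p}$; the test function $f\equiv 1$, which gives $\V_\psi 1 \equiv 2\pi$, realizes equality. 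For $\V$, the same Jensen bound together with $\V f(\psi,t) = (2\pi)^{-1}\V_\psi f(t)$ and integration in $\psi$ produces $\lVert\V\rVert_{L^p\to L^p} \le 1$, again sharp at $f\equiv 1$ (where $\V 1 \equiv 1$). The endpoints $p=1$ (from Fubini applied directly to the identity) and $p=\infty$ (trivial pointwise bound) can be checked separately or obtained as limits.

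For the continuity statements, the integrand $(\psi,\varphi',t)\mapsto f(\bQ_\psi \sph(\varphi',\arccos t))$ is jointly continuous on $\T\times\T\times(-1,1)$ and uniformly bounded by $\lVert f\rVert_\infty$, so dominated convergence yields continuity of $\V_\psi f$ on $(-1,1)$ and of $\V f$ on $\T\times(-1,1)$. As $t\to\pm 1$, uniform continuity of $f$ on the compact sphere forces $f(\bQ_\psi\sph(\varphi',\arccos t))\to f(\pm(\cos\psi,\sin\psi,0))$ uniformly in $\varphi'$ and locally uniformly in $\psi$, which is exactly the boundary value prescribed. The only real obstacle is the bookkeeping of the normalization factor $\sqrt{1-t^2}$: once the arc-length factor and the prefactor in the definition of $\V$ and $\V_\psi$ cancel correctly, everything reduces to rotational invariance of $\sigma_{\S^2}$ together with Jensen's inequality.
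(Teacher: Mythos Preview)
Your proof is correct and reaches the same conclusions, but the route differs from the paper's in two places worth noting.

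First, for the integral identity the paper parameterizes the two hemispheres directly via
\[
H_\psi^\pm(s,t) = \bigl(t\cos\psi - s\sin\psi,\; t\sin\psi + s\cos\psi,\; \pm\sqrt{1-t^2-s^2}\bigr),
\]
computes the Jacobian $\ds = (1-t^2-s^2)^{-1/2}\,\dx s$, and obtains the identity by Fubini. You instead rotate into a frame where the slicing direction becomes the north pole and use ordinary spherical coordinates plus rotational invariance of $\sigma_{\S^2}$. Your route is cleaner conceptually: the cancellation of $\sqrt{1-t^2}$ is transparent, and the key representation $\V_\psi f(t) = \int_0^{2\pi} f(\bQ_\psi\sph(\varphi',\arccos t))\,\dx\varphi'$ makes the later steps immediate.

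Second, for the $L^p$ bounds the paper establishes only the endpoint cases $p=1$ (from the integral identity) and $p=\infty$ (from the trivial pointwise bound) and then invokes the Riesz--Thorin interpolation theorem. You bypass interpolation entirely by applying Jensen's inequality to the probability average $(2\pi)^{-1}\int_0^{2\pi}\cdots\,\dx\varphi'$, which gives the sharp bound $(2\pi)^{1-1/p}$ directly for every $p$. This is more elementary and self-contained; the paper's approach is the standard harmonic-analysis reflex but is not needed here since the averaging structure already delivers the full range of $p$. Your treatment of the boundary $t\to\pm1$ in the continuity argument is also more explicit than the paper's one-line appeal to dominated convergence.
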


\begin{proof}
  We parameterize the \emph{upper} and \emph{lower hemispheres} by
  \begin{equation}
    H_{\psi}^\pm (s,t)
    \coloneqq
    \begin{pmatrix}
      t \cos(\psi) - s \sin(\psi)\\
      t \sin(\psi) + s \cos(\psi)\\
      \pm \sqrt{1- t^2 - s^2}
    \end{pmatrix}, \quad s \in \sqrt{1-t^2} \; \II, \ t \in \II.
  \end{equation}
  Then the upper and lower semicircle of 
  $C_{\psi}^{t}$ can be parameterized via $H_{\psi}^\pm(\cdot,t)$.
  Thus we obtain
  \begin{align}
    \int_{\mathbb S^2} f(\zb \xi) \, \dx\sigma_{\S^2}(\zb \xi)
    &= 
      \int_{\II} \int_{\sqrt{1-t^2}\,\II}
      \left( f(H_{\psi}^+(s,t) + f(H_{\psi}^-(s,t) )\right) 
      \frac{1}{\sqrt{1-t^2-s^2} } \, \dx s \, \dx t
    \\
    &=
      \int_{\II} 
      \frac{1}{\sqrt{1-t^2}}
      \int_{C_{\psi}^{t}}
      f(\zb \xi) \,
      \ds(\bxi) \, \dx t
    =
    \int_{\II} \V_\psi f(t) \, \dx t.
  \end{align}
  Using \eqref{eq:Vs} and integrating over $\psi$ 
  immediately yields the second identity in \eqref{eq:V_int}.
  By Fubini's theorem, $\V_\psi$ and $\V$ are well defined. 
  
  Following the above computation for the absolute value of $f$, we obtain with the triangle inequality
  $\lVert \V_\psi \rVert_{L^1 \to L^1} = \lVert \V \rVert_{L^1 \to L^1} = 1$.
  Since the vertical slice transform is essentially bounded by
  \begin{equation}
    \label{eq:6}
    \lvert \V_\psi f(t) \rvert
    \le
    \frac{1}{\sqrt{1-t^2}}
    \int_{C_{\psi}^{t}} \lvert f( \bxi ) \rvert \ds(\bxi)
    \le
    2\pi \esssup_{\bxi \in \S^2} \, \lvert f(\bxi) \rvert,
  \end{equation}
  we further have $\lVert \V_\psi \rVert_{L^\infty \to L^\infty} = 2\pi$
  and $\lVert \V \rVert_{L^\infty \to L^\infty} = 1$.
  Now the second assertion follows from the Riesz--Thorin interpolation theorem.
  
  The last assertion is an immediate consequence of Lebesgue's dominated convergence theorem.
\end{proof}

Since all circles $C_{\psi}^{t}$ are symmetric with respect to the $\xi_1$-$\xi_2$ plane,
$\mathcal V f$ vanishes for functions~$f$ which are odd in the third coordinate,
i.e., $f(\xi_1,\xi_2,\xi_3) = - f(\xi_1,\xi_2,-\xi_3)$.
For brevity,
we call these functions \emph{odd}.
In \cite{GiReSh94},
an explicit inversion formula for \emph{even} functions, i.e., $f(\xi_1,\xi_2,\xi_3) = f(\xi_1,\xi_2,-\xi_3)$, is derived. 
However,
as for the Radon inversion formula,
this formula leads to instable practical computations
if we leave the range of $\V$.
For numerical simulation,
we will invert $\V$ using its singular value decomposition.
For this purpose,
notice that
the spherical harmonics $Y^k_n$ with even $k+n$ are even functions, while
those with odd $k+n$ are odd functions.

\begin{theorem}[\!{\cite[Thm.~3.3]{HiQu15circav}}] \label{prop:V-inj}
  The vertical slice transform \eqref{eq:V} fulfills
  \begin{equation} \label{eq:VY}
    \V Y_n^k (\psi,t)
    =
    \mathrm{v}_n^k\, \sqrt{\tfrac{2n+1}{4 \pi}} \, \e^{\i k \psi}\, P_n(t)
    ,\qquad n\in\NN,\ k\in\{-n,\dots,n\},\ n+k\text{ even},
  \end{equation}
  where
  \begin{equation}
    \mathrm{v}_n^k \coloneqq (-1)^{\frac{n+k}2} \sqrt{\frac{(n-k)!}{(n+k)!}} \frac{(n+k-1)!!} {(n-k)!!} .
  \end{equation}
  There exist constants $C_1,C_2>0$ such that for all $n\in\NN$, $k\in\{-n,\dots,n\}$ with $n+k$ even,
  \begin{equation} \label{eq:V-sv-bound}
    C_1(n+1/2)^{-1/2}
    \le
    \lvert \mathrm{v}_n^k\rvert
    \le
    C_2(n+1/2)^{-1/4}.
  \end{equation}
\end{theorem}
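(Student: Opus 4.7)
The plan is to exploit the rotational symmetry of $\V$ around the $\xi_3$-axis and reduce the claim to a one-dimensional integration, which can then be evaluated via the representation theory \eqref{eq:Y_D}. First, I would show that $\V Y_n^k(\psi,t) = \e^{\i k\psi}\,\V Y_n^k(0,t)$: since $\bR_3(\alpha)^\tT$ maps $C_{\psi}^t$ onto $C_{\psi-\alpha}^t$, one has $\V(f\circ\bR_3(\alpha)^\tT)(\psi,t) = \V f(\psi-\alpha,t)$, while $Y_n^k\circ\bR_3(\alpha)^\tT = \e^{-\i k\alpha}Y_n^k$. Comparing both identities gives the $\psi$-dependence and reduces everything to computing $\V Y_n^k(0,t)$.

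Second, I would rotate the slicing normal $\be^1$ onto $\be^3$: pick $\bQ_0\in\SO$ with $\bQ_0\be^1=\be^3$, so that the circle $C_0^t$ is parameterised by $\bQ_0^\tT\sph(\phi,\arccos t)$, $\phi\in\T$. Expanding via \eqref{eq:Y_D} and using \eqref{eq:Y} gives
\begin{equation}
\V Y_n^k(0,t) = \sum_{j=-n}^n D_n^{j,k}(\bQ_0)\sqrt{\tfrac{2n+1}{4\pi}\tfrac{(n-j)!}{(n+j)!}}\,P_n^j(t)\,\frac{1}{2\pi}\int_0^{2\pi}\e^{\i j\phi}\,\d\phi.
\end{equation}
Only $j=0$ survives, yielding $\V Y_n^k(0,t) = D_n^{0,k}(\bQ_0)\sqrt{\tfrac{2n+1}{4\pi}}\,P_n(t)$. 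Applying \eqref{eq:Y_D} once more at the point $\be^3$ and using $P_n^j(1)=\delta_{j,0}$ gives the identity $D_n^{0,k}(\bQ_0) = \sqrt{\tfrac{(n-k)!}{(n+k)!}}\,P_n^k(0)$. The explicit value of $P_n^k(0)$ from the Rodrigues-type formula \eqref{eq:Pnk}---which vanishes for $n+k$ odd and otherwise equals $(-1)^{(n+k)/2}\frac{(n+k)!}{2^n((n+k)/2)!\,((n-k)/2)!}$---combined with the double-factorial identity $\frac{(n+k-1)!!}{(n-k)!!} = \frac{(n+k)!}{2^n((n+k)/2)!\,((n-k)/2)!}$ for $n\pm k$ even, identifies the coefficient with $\mathrm{v}_n^k$ and proves \eqref{eq:VY}.

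Third, for the two-sided bound \eqref{eq:V-sv-bound}, I would set $a=(n+k)/2$, $b=(n-k)/2$ and rewrite
\begin{equation}
|\mathrm{v}_n^k| = \frac{\sqrt{(2a)!\,(2b)!}}{2^{a+b}\,a!\,b!}.
\end{equation}
Using the elementary estimate $\binom{2m}{m}\asymp 4^m/\sqrt{m+1}$, each factor $\sqrt{(2m)!}/(2^m m!)$ behaves like $(m+1)^{-1/4}$, so $|\mathrm{v}_n^k|\asymp((a+1)(b+1))^{-1/4}$. Since $a+b=n$ with $a,b\ge 0$, we have $(a+1)(b+1)\ge n+1$, giving the upper bound of order $(n+\tfrac12)^{-1/4}$, and $(a+1)(b+1)\le(\tfrac{n}{2}+1)^2$, giving the lower bound of order $(n+\tfrac12)^{-1/2}$.

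The main obstacle is the bookkeeping step that matches $\sqrt{(n-k)!/(n+k)!}\,P_n^k(0)$ to the closed form of $\mathrm{v}_n^k$; the Rodrigues formula and the double-factorial identity must be handled with consistent sign conventions (also consistent with \eqref{eq:Pn-k} for the case $k<0$). A secondary care point is the bound at the extreme orders $k=\pm n$, where one of the Stirling factors degenerates to $1$ and the estimate must be read off directly, rather than from the generic asymptotic.
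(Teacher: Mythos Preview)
The paper does not give its own proof of this theorem; it simply cites \cite[Thm.~3.3]{HiQu15circav}. So there is no in-paper argument to compare against. That said, your proof is correct and complete, and it is very much in the spirit of how the paper handles the analogous singular value decomposition for the semicircle transform in \autoref{thm:svd2} (Appendix~B): there too one expands $Y_n^k(\bQ\,\cdot)$ via \eqref{eq:Y_D}, integrates out one variable so that a single order survives, and is left with an explicit evaluation of $P_n^j$ at a special argument.

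Two small remarks. First, your computation of $D_n^{0,k}(\bQ_0)$ via \eqref{eq:Y_D} evaluated at $\be^3$ is a nice trick; note that it automatically shows the value is independent of the particular $\bQ_0$ with $\bQ_0\be^1=\be^3$, since the right-hand side $\sqrt{(n-k)!/(n+k)!}\,P_n^k(0)$ does not see $\bQ_0$ at all. Second, your bound argument is cleaner than one might expect: writing $|\mathrm v_n^k|=\sqrt{\binom{2a}{a}\binom{2b}{b}}\,2^{-(a+b)}$ with $a+b=n$ and using the uniform two-sided Stirling estimate for $\binom{2m}{m}4^{-m}$ (valid also at $m=0$, where the quotient equals $1$) handles the extreme orders $k=\pm n$ without a separate case distinction, so the ``secondary care point'' you flag is in fact already covered by your own formulation.
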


Noting that the functions
\begin{equation} \label{eq:Bnk}
  B_n^k(\psi,t)
  \coloneqq
  \sqrt{\frac{2n+1}{4\pi}}\, P_n(t) \, \e^{\i k \psi}
  ,\qquad \forall (\psi,t) \in \T\times \II,
\end{equation}
form an orthonormal basis of $L^2(\T \times \II)$ and that
$\mathrm{v}_n^k \to 0$ as $n \to \infty$,
we deduce that
$\V\colon L^2(\S^2) \to L^2(\T \times \II)$ is a compact operator 
with singular value decomposition
\begin{equation} \label{eq:V_svd}
  \V f (\psi,t) = \sum_{n \in {\mathbb N_0}}
  \sum_{\substack{k=-n\\n+k \,  \text{even}} }^n \mathrm{v}_n^k \, B_n^k(\psi,t) 
  .
\end{equation}
Restricting $\V$ to even functions $L^2_{\sym}(\S^2)$,
where
$L^p_{\sym}(\S^2)$ with $1 \le p \le \infty$ is defined as
\begin{equation}
  L^p_{\sym}(\S^2)
  \coloneqq
  \bigl\{f\in L^p(\S^2) : f(\xi)= \check f(\xi)
  \text{ a.e. on }\S^2 \bigr\}
\end{equation}
and $\check f(\bxi) \coloneqq f(\xi_1,\xi_2,-\xi_3)$ is
the reflection at the $\xi_1$-$\xi_2$ plane,
the operator $\V\colon L^2_{\sym}(\S^2) \to L^2(\T \times \II)$ is injective.
Its \emph{Moore--Penrose pseudoinverse}, cf.\ \cite{EnHaNe96}, is given by
\begin{equation}\label{MP_V}
  \V^\dagger \colon \mathcal R(\V)\oplus \mathcal R(\V)^\perp \to L^2_{\sym}(\S^2)
  ,\qquad
  \V^\dagger g 
  = \sum_{n \in {\mathbb N_0}} 
  \sum_{\substack{k=-n\\n+k \,  \text{even}} }^n
  (\mathrm{v}_n^k)^{-1} \,
  \langle g, B_n^k \rangle \,
  Y^k_n,
\end{equation}
where $\mathcal R(\V)$ denotes the range of $\V$.
We will further need the adjoint operator of $\mathcal V$.

\begin{proposition}
  Let $1 \le p,q \le \infty$ with $1/p + 1/q = 1$.
  For $1 \le p < \infty$,
  the adjoint $\V^*\colon L^q(\T \times \II) \to L^q(\S^2)$
  of $\V\colon L^p(\S^2) \to L^p(\T \times \II)$ is given by
  \begin{equation}
    \label{eq:V*}
    \mathcal V^*g(\bxi)
    = 
    \frac{1}{2\pi}
    \int_{\T} g(\psi,\xi_1\cos\psi+\xi_2\sin\psi) \d\psi,
  \end{equation}
  and the adjoint $\V_\psi^*\colon L^q(\II) \to L^q(\S^2)$
  of $\V_\psi \colon L^p(\S^2) \to L^p(\II)$ by
  \begin{equation}
    \label{eq:Vs*}
    \mathcal V_\psi^*g(\bxi)
    = 
    g(\xi_1\cos\psi+\xi_2\sin\psi).
  \end{equation}
  Moreover,
  it holds $\V^* \colon C(\T \times \II) \to C(\S^2)$
  and $\V_\psi^* \colon C(\II) \to C(\S^2)$.
\end{proposition}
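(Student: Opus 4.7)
The plan is to derive both adjoint formulas directly from the defining duality pairing, by combining Fubini's theorem with the parameterization of the sphere via upper and lower hemispheres that was already used in the proof of \autoref{prop:V_norm}. For the simpler operator $\V_\psi$, I would unwind the definitions in
\begin{equation*}
    \langle \V_\psi f, g \rangle_{L^2(\II)}
    = \int_{\II} \frac{1}{\sqrt{1-t^2}} \int_{C_\psi^t} f(\bxi) \,\ds(\bxi)\; \overline{g(t)} \dx t
\end{equation*}
and simply reverse the slicing identity established earlier. Since every $\bxi \in C_\psi^t$ satisfies $t = \cS_\psi(\bxi) = \xi_1\cos\psi + \xi_2\sin\psi$, the right-hand side equals $\int_{\S^2} f(\bxi)\, \overline{g(\cS_\psi(\bxi))} \dx\sigma_{\S^2}(\bxi)$, which yields \eqref{eq:Vs*} immediately.

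For $\V$, the argument is analogous except that the duality pairing now involves an additional integral over $\psi$. After reversing the slicing identity fiber by fiber, I would invoke Fubini's theorem to swap the $\psi$-integral with the spherical integral, collecting the prefactor $1/(2\pi)$ inherited from the normalization $\V = \V_\psi/(2\pi)$ in \eqref{eq:Vs}. This directly produces \eqref{eq:V*}. The use of Fubini is legitimate because the iterated absolute-value integral is finite by the $L^p$-boundedness established in \autoref{prop:V_norm}.

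For the mapping properties, the $L^q$-boundedness of $\V^*$ and $\V^*_\psi$ follows from standard Banach-space duality applied to the bounded operators in \autoref{prop:V_norm}, so no additional computation is required. The continuity claims $\V^* \colon C(\T\times\II) \to C(\S^2)$ and $\V^*_\psi \colon C(\II) \to C(\S^2)$ follow directly from the closed-form expressions \eqref{eq:V*}, \eqref{eq:Vs*}: for continuous $g$, the integrand $(\psi,\bxi) \mapsto g(\psi, \xi_1\cos\psi + \xi_2\sin\psi)$ is jointly continuous on the compact set $\T \times \S^2$, and integration in $\psi$ yields a continuous function of $\bxi$ via Lebesgue's dominated convergence theorem.

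The main obstacle is essentially bookkeeping: tracking the $1/(2\pi)$ factor between $\V$ and $\V_\psi$, as well as the $1/\sqrt{1-t^2}$ weight that is absorbed when passing from the slice parameterization back to the surface measure on $\S^2$. Once these normalizations are carefully aligned and Fubini is justified, the proof reduces to a direct transposition of the slicing identity from \autoref{prop:V_norm}.
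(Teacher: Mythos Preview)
Your proposal is correct and follows essentially the same approach as the paper: both compute the duality pairing by inserting the definition of $\V$ (resp.\ $\V_\psi$), invoke the slicing identity from \autoref{prop:V_norm} to pass from the circle integrals back to the surface integral on $\S^2$ while using $t=\cS_\psi(\bxi)$ on each slice, and apply Fubini together with dominated convergence for the continuity statement. The only cosmetic difference is that the paper treats $\V$ first and then remarks that $\V_\psi$ is analogous, whereas you proceed in the reverse order.
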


\begin{proof}
  The assertion follows from \autoref{prop:V_norm},
  which yields
  \begin{align}
    \langle \V f,g \rangle 
    &=
      \int_{\T} \int_{\II}
      \V f (\psi,t) \, g(\psi,t) \,
      \d t \,\d\psi
      =
      \int_{\T} \int_{\II}
      \frac{1}{2\pi\sqrt{1-t^2}}
      \int_{C_{\psi}^{t}} f(\bxi)
      \, g(\psi,t) \, \ds(\bxi) \d t \d\psi
    \\
    &=
      \frac{1}{2\pi}
      \int_{\T} \int_{\S^2}
      f(\bxi) \, g(\psi,\xi_1\cos\psi+\xi_2\sin\psi)
      \d\sigma_{\S^2}(\bxi) \d\psi
      =
      \langle f,  \V^*g  \rangle
  \end{align}
  for all $f \in L^p(\S^2)$, $g \in L^q(\T \times \II)$.
  The adjoint of $\V_\psi$ can be established analogously%
  ---without the integral over $\T$ and the factor $(2\pi)^{-1}$.
  The last assertion again follows from
  Lebesgue's dominated convergence theorem
  and by the definition of the adjoint.
\end{proof}

%------------------------------------------------------------------
\subsection{Vertical Slice Transform of Measures}
\label{sec:vert-slice-meas}
% ------------------------------------------------------------------
For functions $f\colon \S^2\to\R$, the vertical slice transform $\V f(\psi,t)$ in \eqref{eq:V}
and its restriction $\V_\psi f(t)$ in \eqref{eq:Vs}
are integrals of $f$ along the slices $\cS_\psi^{-1}(t)$.
Heuristically,
the related concept for measures $\mu \in \M(\S^2)$
would be to consider $\mu(\cS_\psi^{-1}(t))$.
In this manner,
for a fixed angle $\psi \in \T$,
we generalize the (restricted) \emph{vertical slice transform} $\V_\psi$ by
\begin{equation}
  \label{eq:Vs_measure}
  \V_\psi \colon \M(\S^2) \to \M(\II),\quad
  \mu \mapsto (\cS_\psi)_\# \mu = \mu \circ \cS_\psi^{-1}.
\end{equation}
In the function setting,
we figuratively obtain $\V f$ by gluing the (rescaled) functions $\frac{1}{2\pi}\V_\psi f$
together along the angle $\psi$.
In the measure setting,
the corresponding concept is to consider $\V_\psi$
as disintegration family.
We define the \emph{vertical slice transform} $\V\colon \M(\S^2) \to \M(\T \times \II)$ by
\begin{equation}
  \label{def:V_measure}
  \V \mu
  \coloneqq
  (T_{\V})_\# (u_{\T} \times \mu)
  \quad\text{with}\quad
  T_{\V}(\psi, \zb\xi) \coloneqq (\psi, \cS_\psi(\zb\xi)).
\end{equation}
The disintegration aspect becomes clear in the following proposition.

\begin{proposition}
  \label{prop:V-dis}
  Let $\mu \in \M(\S^2)$.
  Then $\V \mu$ can be disintegrated into the family $\V_\psi \mu$
  with respect to the uniform measure $u_\T$,
  i.e., for all $g \in C(\T\times\II)$, it holds
  \begin{equation}
    \label{eq:13}
    \int_{\T \times \II}
    g(\psi, t) 
    \d \V\mu (\psi, t)
    =
    \int_{\T} \int_{\II}
    g(\psi, t)
    \d \V_\psi \mu(t) \d u_\T(\psi).
  \end{equation}
\end{proposition}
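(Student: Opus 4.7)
The plan is to unfold the definition \eqref{def:V_measure} of $\V\mu$ as a push-forward, apply the change-of-variables formula for push-forward measures, and then use Fubini to recognize the inner integral as integration against $\V_\psi\mu = (\cS_\psi)_\#\mu$. Since every ingredient is well-defined and the statement is essentially a bookkeeping result, I do not anticipate any serious obstacle; the main thing to check is that the integrand is jointly measurable so that Fubini can be applied.

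First I would write, for $g\in C(\T\times\II)$,
\begin{equation}
    \int_{\T\times\II} g(\psi,t)\d\V\mu(\psi,t)
    = \int_{\T\times\S^2} g(T_\V(\psi,\bxi))\d(u_\T\times\mu)(\psi,\bxi)
    = \int_{\T\times\S^2} g(\psi,\cS_\psi(\bxi))\d(u_\T\times\mu)(\psi,\bxi),
\end{equation}
using the standard push-forward identity $\int h\d(T_\#\nu) = \int h\circ T\d\nu$ applied to the product measure $u_\T\times\mu$ and the map $T_\V(\psi,\bxi)=(\psi,\cS_\psi(\bxi))$.

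Next I would invoke Fubini's theorem for the finite product measure $u_\T\times\mu$ to split this into an iterated integral
\begin{equation}
    \int_\T\int_{\S^2} g(\psi,\cS_\psi(\bxi))\d\mu(\bxi)\d u_\T(\psi).
\end{equation}
Here I would briefly justify the hypotheses: the map $(\psi,\bxi)\mapsto\cS_\psi(\bxi)=\cos\psi\,\xi_1+\sin\psi\,\xi_2$ is continuous on $\T\times\S^2$, so $g(\psi,\cS_\psi(\bxi))$ is continuous and hence Borel measurable; moreover $\mu$ is finite and $g$ is bounded on the compact domain, so the integrability hypothesis is trivially satisfied.

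Finally I would apply the push-forward identity a second time, now for the fixed slicing map $\cS_\psi\colon\S^2\to\II$ and the measure $\mu$, recognizing by \eqref{eq:Vs_measure} that $\V_\psi\mu=(\cS_\psi)_\#\mu$. This converts the inner integral into $\int_\II g(\psi,t)\d\V_\psi\mu(t)$, and we obtain \eqref{eq:13}. To conclude I would briefly note that this identity, together with the fact that $u_\T$ is a probability measure, exhibits $\{\V_\psi\mu\}_{\psi\in\T}$ as a disintegration of $\V\mu$ with respect to the first-marginal projection (whose image measure is indeed $u_\T$, as one sees by taking $g$ depending only on $\psi$).
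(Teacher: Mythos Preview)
Your proposal is correct and follows essentially the same approach as the paper: unfold the push-forward definition of $\V\mu$, apply Fubini to the product $u_\T\times\mu$, and then recognize the inner integral as integration against $(\cS_\psi)_\#\mu=\V_\psi\mu$. Your version is slightly more detailed in justifying the Fubini hypotheses and in noting that the first marginal is $u_\T$, but the argument is the same.
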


\begin{proof}
  Incorporating \eqref{def:V_measure},
  and using Fubini's theorem,
  we obtain
  \begin{equation}
    \langle \V\mu, g \rangle
    =
    \int_{\T} \int_{\S^2}
    g(\psi, \cS_\psi (\zb \xi))
    \d \mu(\zb \xi) \d u_\T(\psi)
    =
    \int_{\T} \int_{\II}
    g(\psi, t)
    \d ( (\cS_\psi)_\# \mu )(t) \d u_\T(\psi)
  \end{equation}
  for every $g  \in C(\T \times \II)$.
  By \eqref{eq:Vs_measure} this implies the assertion.
\end{proof}

The defined measure-valued versions of $\V$ and $\V_\psi$
are in fact the adjoints of
$\V^* \colon C(\T \times \II) \to C(\S^2)$ in \eqref{eq:V*}
and
$\V_\psi^* \colon C(\II) \to C(\S^2)$ in \eqref{eq:Vs*},
which explains the generalizations from the duality point of view.

\begin{proposition}
  \label{prop:V-as-adj}
  The vertical slice transforms
  \eqref{def:V_measure} and \eqref{eq:Vs_measure}
  satisfy
  \begin{align}
    \label{eq:V_measure_adj}
    \langle \V\mu, g \rangle
    &=
      \langle \mu, \V^*g \rangle
      \quad \text{for all } g \in C(\T \times \II) \quad \text{and}
    \\
    \langle \V_\psi \mu, g \rangle
    &=
      \langle \mu, \V_\psi^*g \rangle
      \quad \text{for all } g \in C(\II),\, \psi\in\T
  \end{align}
  with the adjoint operators from \eqref{eq:V*} and \eqref{eq:Vs*}.
\end{proposition}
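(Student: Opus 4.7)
The plan is to compute both pairings directly from the push-forward definitions and recognize the resulting iterated integrals as $\V^{*}g$ and $\V_{\psi}^{*}g$ applied under $\mu$. The two identities are essentially parallel, with the one for $\V_{\psi}$ being a single change of variables and the one for $\V$ requiring an additional application of Fubini's theorem in the angular variable.

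First I would handle $\V_{\psi}$. By the defining formula \eqref{eq:Vs_measure}, $\V_{\psi}\mu = (\cS_{\psi})_{\#}\mu$, and the change-of-variables formula for push-forwards gives
\begin{equation}
    \langle \V_{\psi}\mu, g\rangle
    = \int_{\II} g(t) \d((\cS_{\psi})_{\#}\mu)(t)
    = \int_{\S^{2}} g(\cS_{\psi}(\bxi)) \d\mu(\bxi).
\end{equation}
Substituting the explicit expression \eqref{eq:Vs*} for $\V_{\psi}^{*}g$ immediately yields $\langle \mu, \V_{\psi}^{*}g\rangle$. This part uses only that $\mu$ is a finite Borel measure on the compact space $\S^{2}$ and that $g\circ\cS_{\psi}$ is continuous, hence $\mu$-integrable.

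Next I would treat $\V$ analogously. Using \eqref{def:V_measure} together with the push-forward change of variables, I get
\begin{equation}
    \langle \V\mu, g\rangle
    = \int_{\T\times\S^{2}} g(T_{\V}(\psi,\bxi)) \d(u_{\T}\times\mu)(\psi,\bxi)
    = \int_{\T}\int_{\S^{2}} g(\psi,\cS_{\psi}(\bxi)) \d\mu(\bxi) \d u_{\T}(\psi).
\end{equation}
Here I would invoke Fubini's theorem---justified because $g$ is continuous and hence bounded on the compact domain $\T\times\II$, while $u_{\T}\times\mu$ is a finite product measure---to swap the order of integration, obtaining
\begin{equation}
    \langle \V\mu, g\rangle
    = \int_{\S^{2}} \left(\int_{\T} g(\psi,\cS_{\psi}(\bxi))\d u_{\T}(\psi)\right) \d\mu(\bxi).
\end{equation}
Recognizing $u_{\T} = (2\pi)^{-1}\d\psi$ and $\cS_{\psi}(\bxi) = \xi_{1}\cos\psi+\xi_{2}\sin\psi$, the inner integral is exactly $\V^{*}g(\bxi)$ by \eqref{eq:V*}, which finishes the identification with $\langle\mu,\V^{*}g\rangle$.

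There is no real obstacle here: the only thing to be careful about is the normalization convention, namely that $u_{\T}$ appearing in \eqref{def:V_measure} is the \emph{normalized} uniform measure on $\T$ (total mass one), which matches the prefactor $(2\pi)^{-1}$ in the definition \eqref{eq:V*} of $\V^{*}$. Noting this correspondence, the computation collapses and both duality identities hold with no further regularity required beyond continuity of $g$ and finiteness of $\mu$.
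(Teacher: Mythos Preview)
Your proof is correct and follows essentially the same approach as the paper: both arguments unwind the push-forward definitions via the change-of-variables formula and (for $\V$) apply Fubini to identify the inner integral with $\V^*g$. Your version is slightly more explicit about justifying Fubini and the normalization of $u_\T$, but the substance is identical.
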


\begin{proof}
  For $\mu \in \M(\S^2)$ and $g \in C(\T \times \II)$,
  the conjecture can be established by
  \begin{equation} 
    \langle \V\mu, g \rangle
    =
    \int_{\T \times \II} 
    g(\psi,t) \, 
    \d (T_{\V})_\# (u_{\T} \times \mu)(\psi,t)
    =
    \int_{\S^2} \int_{\T} 
    g(\psi, \cS_\psi(\zb \xi)) 
    \d u_{\T}(\psi) \d \mu(\zb \xi)
    = 
    \langle \mu, \V^*g \rangle
  \end{equation}
  and,
  for $\mu \in \M(\S^2)$, $g \in C(\II)$, and fixed $\psi \in \T$,
  by
  \begin{equation*} 
    \langle \V_\psi\mu, g \rangle
    =
    \int_{\II} 
    g(t) \, 
    \d (\cS_\psi)_\#  \mu (t)
    =
    \int_{\S^2}  
    g(\cS_\psi(\zb \xi)) 
    \d \mu(\zb \xi)
    = 
    \langle \mu, \V_\psi^*g \rangle. \qedhere
  \end{equation*}
\end{proof}

One could equivalently use the identity \eqref{eq:V_measure_adj} to define the vertical slice transform of a measure, analogously as it was done for the Radon transform in \cite[Chap.\ 2, § 2]{Hel11}.
For absolutely continuous measures with respect to $\sigma_{\S^2}$,
the measure- and function-valued vertical slice transforms coincide,
which now justify the different scalings in \eqref{eq:V} and \eqref{eq:Vs}.

\begin{proposition}
  \label{cor:abs1}
  For $f \in L^1(\S^2)$,
  the vertical slice transforms satisfy
  \begin{equation*}
    \V[f \sigma_{\S^2}] = (\V f) \, \sigma_{\T \times \II}
    \quad\text{and}\quad
    \V_\psi[f \sigma_{\S^2}] = (\V_\psi f) \, \sigma_{\II}.
  \end{equation*}
  In particular,
  the transformed measures are again absolutely continuous.
\end{proposition}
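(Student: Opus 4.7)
The plan is to apply the adjoint characterization from \autoref{prop:V-as-adj} and reduce both identities to the $L^p$–$L^q$ adjoint relation for the function-level transforms established in the proof of the preceding adjoint proposition. Since $C(\II)$ and $C(\T\times\II)$ are the preduals of $\M(\II)$ and $\M(\T\times\II)$, two finite measures coincide as soon as they induce the same functional on continuous test functions, so the task reduces to checking pairings.

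First I would handle the restricted transform. Fixing $\psi\in\T$ and an arbitrary $g\in C(\II)$, \autoref{prop:V-as-adj} combined with $\d(f\sigma_{\S^2}) = f\,\d\sigma_{\S^2}$ and the adjoint formula \eqref{eq:Vs*} gives
\begin{equation}
  \langle \V_\psi[f\sigma_{\S^2}], g\rangle
  = \int_{\S^2} f(\bxi)\, g(\cS_\psi(\bxi))\, \d\sigma_{\S^2}(\bxi).
\end{equation}
This same integral already appeared in the function-level adjoint derivation as $\int_\II \V_\psi f(t)\, g(t)\, \d t = \langle (\V_\psi f)\sigma_\II, g\rangle$. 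Because \autoref{prop:V_norm} guarantees $\V_\psi f \in L^1(\II)$, the measure $(\V_\psi f)\sigma_\II$ is a legitimate finite measure, and agreement of the pairings on $C(\II)$ forces the identity $\V_\psi[f\sigma_{\S^2}] = (\V_\psi f)\sigma_\II$.

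For the unrestricted transform the argument runs in parallel: testing against $g\in C(\T\times\II)$ and invoking \eqref{eq:V*} yields
\begin{equation}
  \langle \V[f\sigma_{\S^2}], g\rangle
  = \frac{1}{2\pi} \int_\T \int_{\S^2} f(\bxi)\, g(\psi,\cS_\psi(\bxi))\, \d\sigma_{\S^2}(\bxi)\, \d\psi,
\end{equation}
which by the coarea-type computation inside the proof of \autoref{prop:V_norm} equals $\int_\T\int_\II \V f(\psi,t)\, g(\psi,t)\,\d t\,\d\psi = \langle (\V f)\sigma_{\T\times\II}, g\rangle$. The one thing that needs genuine attention is that the factor $(2\pi)^{-1}$ appearing in \eqref{eq:V*} matches precisely the normalization $u_\T = (2\pi)^{-1}\,\d\psi$ baked into the measure-valued definition of $\V$; this consistency of normalizations is the main—though entirely routine—bookkeeping point.

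Absolute continuity of the transformed measures is then immediate, since each has been exhibited as an $L^1$-density against the Lebesgue reference measure on $\II$ or $\T\times\II$. There is no substantial analytic obstacle: the proof is a short duality argument provided the scaling constants in the function- and measure-level definitions are tracked carefully.
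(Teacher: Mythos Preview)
Your proposal is correct and follows essentially the same duality argument as the paper: both use \autoref{prop:V-as-adj} to pass from the measure pairing to the $L^1$--$L^\infty$ adjoint relation $\langle f, \V^* g\rangle = \langle \V f, g\rangle$, and then reinterpret the result as a density times Lebesgue measure. The paper's proof is just a terser chain of equalities, while you spell out the intermediate integrals and the normalization bookkeeping explicitly.
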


\begin{proof}
Let $\langle \cdot, \cdot \rangle_\M$
  denotes the dual pairing for measures and continuous function
  and $\langle \cdot, \cdot \rangle_{L}$
  the dual pairing between $L^1$ and $L^\infty$ functions.
  Then the identity follows directly from \autoref{prop:V-as-adj} by
  \begin{equation}
    \label{eq:2}
    \langle \V[f \sigma_{\S^2}], g \rangle_{\M}
    =
    \langle f \sigma_{\S^2}, \V^* g \rangle_{\M}
    =
    \langle f, \V^* g \rangle_{L}
    =
    \langle \V f, g \rangle_{L}
    =
    \langle (\V f)\, \sigma_{\T \times \II}, g \rangle_{\M}
  \end{equation}
  for all $g \in C(\T\times\II)$.
  For $\V_\psi$, the identity follows analogously.
\end{proof}

By the following theorem, we see that similarly to the function setting,
the vertical slice transform is injective
when restricted to the even measures
(with respect to the $\xi_1$-$\xi_2$ plane)
given by
\begin{equation}
  \M_{\sym}(\S^2)
  \coloneqq
  \{\mu\in\M(\S^2) : \inn{\mu,f} = \inn{\mu,\check f} \text{ for all } f\in C(\S^2) \}.
  \label{eq:Msym}
\end{equation}

\begin{theorem} \label{thm:V_inj} 
  The vertical slice transform
  $\V \colon \M_\mathrm{sym}(\S^2)\to \M(\T\times \II)$ 
  is injective. 
\end{theorem}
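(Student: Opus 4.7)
The plan is to convert injectivity into a polynomial density statement on $\S^2$. Suppose $\mu \in \M_{\sym}(\S^2)$ satisfies $\V\mu = 0$; I aim to show $\mu = 0$. By \autoref{prop:V-as-adj} this is equivalent to $\int_{\S^2} \V^* g \d\mu = 0$ for every $g \in C(\T \times \II)$, so it suffices to exhibit a family of test functions whose images under $\V^*$ have complex-linear span uniformly dense in $C(\S^2)$.

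As test functions I would take the separated trigonometric-polynomial ansatz $g(\psi,t) = \e^{-\i k \psi}\, t^n$ with $n \in \NN$ and $k \in \Z$. Writing the slicing operator in the form $\cS_\psi(\bxi) = \tfrac12\bigl( \e^{-\i\psi}(\xi_1+\i\xi_2) + \e^{\i\psi}(\xi_1-\i\xi_2)\bigr)$ and expanding via the binomial theorem, the $\psi$-integral in \eqref{eq:V*} reduces to a single Fourier mode and yields
\begin{equation}
\V^* g(\bxi) = \tfrac{1}{2^n} \binom{n}{(n-k)/2} (\xi_1 + \i\xi_2)^{(n-k)/2} (\xi_1-\i\xi_2)^{(n+k)/2}
\end{equation}
whenever $|k|\le n$ and $n+k$ is even, and $0$ otherwise. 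As $(n,k)$ ranges over this admissible set, the binomial prefactor is always nonzero, and the resulting monomials $(\xi_1+\i\xi_2)^a(\xi_1-\i\xi_2)^b$ span all polynomials in $\xi_1, \xi_2$ restricted to $\S^2$. The vanishing hypothesis therefore forces $\int_{\S^2} \xi_1^a \xi_2^b \d\mu(\bxi) = 0$ for every $a,b \in \NN$.

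The evenness assumption now supplies the missing coordinate. Using the sphere identity $\xi_3^2 = 1 - \xi_1^2 - \xi_2^2$, every monomial $\xi_1^a \xi_2^b \xi_3^{2c}$ reduces on $\S^2$ to a polynomial in $\xi_1, \xi_2$ and therefore integrates to zero against $\mu$. On the other hand, monomials $\xi_1^a \xi_2^b \xi_3^{2c+1}$ change sign under the reflection $\bxi \mapsto (\xi_1, \xi_2, -\xi_3)$, and are annihilated by $\mu$ by virtue of the definition \eqref{eq:Msym}. Thus $\mu$ kills every polynomial on $\S^2$, and by the Stone--Weierstrass theorem together with the Riesz representation theorem, $\mu = 0$.

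The main obstacle is the Fourier--binomial identification of $\V^* g$ and the verification that the prefactor never vanishes on the admissible set; this is precisely where the parity condition $n+k$ even enters, and it matches the evenness constraint imposed on $\mu$ perfectly. A direct Hilbert-space argument based on the singular value decomposition \eqref{eq:V_svd} is not available here, since signed measures do not embed into $L^2(\S^2)$; the duality identity of \autoref{prop:V-as-adj} is what bridges this gap and allows the density argument to replace the spectral one.
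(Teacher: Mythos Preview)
Your argument is correct and considerably more elementary than the paper's. The paper shows that the range $\{\V^* g : g \in C(\T\times\II)\}$ is dense in $C_{\sym}(\S^2)$ by a Sobolev-type argument: for $f \in H^s_{\sym}(\S^2)$ with $s>2$ it constructs $g = (\V\V^*)^{-1}\V f$ and verifies, via the singular value decomposition \eqref{eq:V_svd} and the bound \eqref{eq:V-sv-bound}, that the defining series converges uniformly, hence $g$ is continuous and $\V^* g = f$. Your route bypasses both the SVD and the Sobolev machinery: the explicit Fourier--binomial computation of $\V^*[\e^{-\i k\psi} t^n]$ produces all monomials $(\xi_1+\i\xi_2)^a(\xi_1-\i\xi_2)^b$, and the symmetry assumption plus the sphere relation $\xi_3^2 = 1-\xi_1^2-\xi_2^2$ upgrade this to all polynomials on $\S^2$, after which Stone--Weierstrass finishes. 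What the paper's approach buys is a density statement for the full range of $\V^*$ in $C_{\sym}(\S^2)$, which is of independent interest; what yours buys is a self-contained argument requiring no spectral input.

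One small wording issue: your opening claim that it suffices for the span of $\V^* g$ to be dense in $C(\S^2)$ is a sufficient condition that can never hold, since every $\V^* g$ depends only on $(\xi_1,\xi_2)$ and is therefore even in $\xi_3$; the range lies in $C_{\sym}(\S^2)$. You do not actually use that claim---you instead show that $\mu$ annihilates all polynomials in $\xi_1,\xi_2$, and then invoke the symmetry of $\mu$ separately to handle odd powers of $\xi_3$---so the proof is unaffected, but the sentence should be rephrased (e.g., ``dense in $C_{\sym}(\S^2)$'', which is what the subsequent argument in effect establishes).
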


The proof is given in \autoref{app:thm:V_inj}.

%-----------------------------------------------
\section{Normalized Semicircle Transform}\label{sec:semicircle}
%----------------------------------------------
\begin{figure}
  \centering
    \includegraphics[width=.4\textwidth]{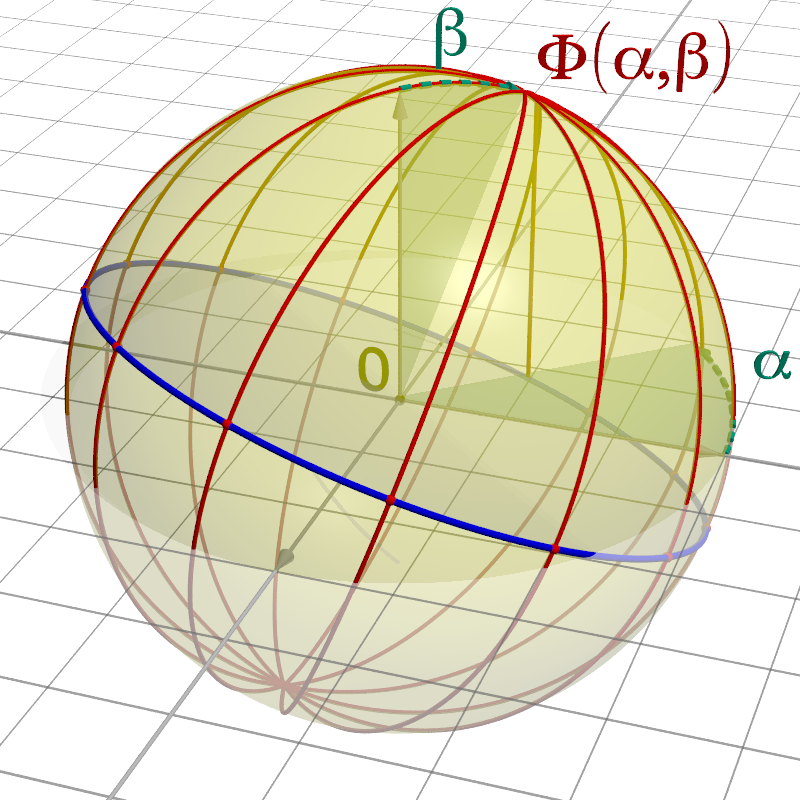}
  \caption{
    Semicircles $M_{\alpha,\beta}^\gamma$ (red) 
    starting at a fixed point $\Phi(\alpha,\beta)$ and with varying $\gamma \in\T$.
    Here $\beta$ is the angle of $\Phi(\alpha,\beta)$ to the north pole and $\alpha$ the angle of its projection in the $\xi_1$-$\xi_2$ plane to the $\xi_1$ axis.
    The blue circle is orthogonal to the semicircles.}	
  \label{fig:semicircles} 
\end{figure}	

\subsection{Normalized Semicircle Transform of Functions} 
% --------------------------------------------------------------
Instead of integrating over parallel slices,
the semicircle transform
integrates a function along all meridians
with respect to a fixed zenith on the sphere.
For any \emph{zenith} $\sph(\alpha,\beta)\in\S^2$ with $\alpha \in \T$ and $\beta \in [0,\pi]$,
we define the \emph{azimuth operator} $\cA_{\alpha,\beta}\colon \S^2 \to \T$
and the \emph{zenith operator} $\cZ_{\alpha,\beta}\colon \S^2 \to [0, \pi]$ as
\begin{align}
   \label{eq:4}
  \cA_{\alpha,\beta}(\bxi)
  &\coloneqq
  \azi(\eul(\alpha,\beta,0)^\tT \, \bxi)
  ,\\
  \cZ_{\alpha,\beta}(\bxi)
  &\coloneqq
  \zen(\eul(\alpha,\beta,0)^\tT \, \bxi),
\end{align}
i.e.,
we rotate the zenith back to the north pole
and take the azimuth and zenith angle,
see \autoref{fig:semicircles}.
For the zenith $\sph(\alpha,\beta)$
and fixed $\gamma \in \T$,
we consider the \emph{semicircles/meridians}
\begin{equation}
  M_{\alpha,\beta}^\gamma
  \coloneqq
  \cA_{\alpha,\beta}^{-1}(\gamma)
  =
  \{\bxi \in \mathbb S^2 : \cA_{\alpha,\beta}(\bxi) = \gamma \}.
\end{equation}%
If $\gamma\neq0$, we have
\begin{equation}
 M_{\alpha,\beta}^\gamma
 = 
 \{
\bxi=\eul(\alpha,\beta,0) \sph(\gamma, \vartheta) 
= 
\eul(\alpha,\beta,\gamma) \sph(0,\vartheta): \vartheta \in (0,\pi) 
\}.
\end{equation}
Otherwise, if $\gamma=0$, we need to replace the open interval by a closed one, i.e., $\vartheta \in [0,\pi]$.
Figuratively,
$M_{\alpha,\beta}^\gamma$ is a rotation of
the meridian $\{\sph(\gamma, \vartheta) : \vartheta \in (0, \pi)\}$
with azimuth $\gamma$ by $\eul(\alpha,\beta,0)$.
The \emph{normalized semicircle transform} $\mathcal W$
of $f \colon \S^2 \to \R$ is defined by
\begin{align}
  \label{eq:7}
  \mathcal W f (\alpha,\beta,\gamma)
  &\coloneqq
  \frac{1}{4\pi}
  \int_{M_{\alpha,\beta}^\gamma}
  f(\bxi) \, \sin\left(\cZ_{\alpha,\beta} ( \bxi) \right)\,
  \ds(\bxi)\\
  &=
  \frac{1}{4\pi}
  \int_0^\pi
  f(\eul(\alpha,\beta,0) \sph(\gamma,\vartheta))
  \sin(\vartheta)
  \dx \vartheta.
\end{align}
We may interpret $M_{\alpha,\beta}^\gamma$ as rotation of the prime median by $\eul(\alpha,\beta,\gamma)$.
Based on the substitution $\bQ = \eul(\alpha,\beta,\gamma)$, the normalized semicircle transform defines a function on $\SO$ via
\begin{equation}
  \label{eq:B}
  \mathcal W f (\bQ)
  \coloneqq
  \frac{1}{4\pi}
  \int_{0}^{\pi}
  f\left(\bQ \sph(0,\vartheta) \right) \, \sin(\vartheta) \,
  \dx \vartheta.
\end{equation}
Henceforth,
we will not distinguish between
$\W f(\alpha, \beta, \gamma)$ and $\W f(\bQ)$.
Especially
for the inversion formula by the singular value decomposition,
we will make use of the latter definition.
The multiplication with $(4\pi)^{-1}\, \sin (\vartheta)$ in the latitude
ensures that
density functions are mapped to density functions
allowing the later generalization to measures.
For the zenith $\sph(\alpha,\beta)$,
we define the (normalized) restriction
\begin{equation}
  \label{eq:5}
  \W_{\alpha,\beta} f
  \coloneqq
  4\pi \, \W f(\alpha,\beta,\cdot).
\end{equation}

\begin{remark}
  The \emph{(unnormalized) semicircle transform} $\widetilde {\mathcal W}\colon C(\S^2)\to C(\SO)$ is defined 
  by
  \begin{equation} \label{eq:A}
    \widetilde {\mathcal W} f(\bQ) \coloneqq \int_{-\pi/2}^{\pi/2} f\left(\smash{\bQ^\tT (\sph(\varphi,\tfrac\pi2))}\right) \d\varphi
    ,\qquad \bQ\in\SO,
  \end{equation}
  see \cite{HiPoQu18}.
  It computes the mean values of $f$ along all half great circles of the sphere, i.e., without the weight $\sin(\vartheta)$ of \eqref{eq:B}.
  The injectivity of $\widetilde {\mathcal W}$ was shown in \cite{Gro98}.
  A singular value decomposition and inversion algorithms were provided in \cite{HiPoQu18}.
  The authors of \cite{Bon22} reinvented this transform with another parameterization using the plane through $\Psi(\alpha,\beta,0) \zb e^1$ and $\Psi(\alpha,\beta,0) \zb e^2$.
More precisely, their notation was not clear to us since it seems that they have applied the normalized transform
in the numerical examples, but certain parts in their analysis rely on the unnormalized transform.
\end{remark}

The semicircle transforms $\W$ and $\W_{\alpha,\beta}$
are well defined for continuous functions
as well as
for $p$-integrable functions.
Moreover,
both transforms are continuous operators.

\begin{proposition}
  \label{prop:important}
  Let $1 \le p \le \infty$,
  and let $\sph(\alpha,\beta) \in \S^2$.
  For every $f \in L^p(\S^2)$,
  it holds 
  \begin{equation}
    \int_{\T} \W_{\alpha,\beta} f(\gamma) \, \dx \gamma
    =
    \int_{\S^2} f(\zb\xi) \, \dx\sigma_{\S^2}(\zb\xi)
    \quad\text{and}\quad
    \int_{\SO} \W f(\bQ) \d\sigma_{\SO}(\bQ)
    =
    \int_{\S^2} f(\zb\xi) \, \dx\sigma_{\S^2}(\zb\xi).
  \end{equation}
  The operators
  $\W_{\alpha,\beta} \colon L^p(\S^2) \to L^p(\T)$
  and
  $\W \colon L^p(\S^2) \to L^p(\SO)$
  are bounded  with
  \begin{equation}
    \lVert \W_{\alpha,\beta} \rVert_{L^p \to L^p} \le 2^{1-1/p} 
    \quad\text{and}\quad
    \lVert \W \rVert_{L^p \to L^p} \le (2\pi)^{1/p - 1}.
  \end{equation}
  Moreover,
  it holds
  $\W_{\alpha,\beta} \colon C(\S^2) \to C(\T)$
  and
  $\W \colon C(\S^2) \to C(\SO)$.
\end{proposition}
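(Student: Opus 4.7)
The plan is to mimic the strategy used in \autoref{prop:V_norm}, replacing the hemispherical parameterization with the Euler angle parameterization together with the rotation invariance of $\sigma_{\S^2}$. Starting from the representation
\begin{equation}
\W_{\alpha,\beta} f(\gamma) = \int_0^\pi f\bigl(\eul(\alpha,\beta,0)\,\sph(\gamma,\vartheta)\bigr)\,\sin\vartheta \d\vartheta,
\end{equation}
I would integrate over $\gamma \in \T$, swap the order of integration, recognize the inner double integral via \eqref{eq:int_S} as $\int_{\S^2} f(\eul(\alpha,\beta,0)\bxi)\d\sigma_{\S^2}(\bxi)$, and then use rotation invariance of $\sigma_{\S^2}$ to drop the rotation. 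This yields the first identity. For the second identity I would write $\W f(\bQ)$ in Euler angles, apply \eqref{eq:SO_int}, and use the factorization $\eul(\alpha,\beta,\gamma)\sph(0,\vartheta) = \eul(\alpha,\beta,0)\sph(\gamma,\vartheta)$ (which follows because $\bR_3(\gamma)$ acts on the meridian parameter as a shift of the azimuth). Swapping integrals reduces the triple integral over $(\gamma,\vartheta,\alpha,\beta)$ to $\int_0^{2\pi}\!\int_0^\pi \sin\beta \d\beta \d\alpha \cdot \int_{\S^2} f \d\sigma_{\S^2}$, and the prefactor $(4\pi)^{-1}$ cancels the $4\pi$ coming from the $(\alpha,\beta)$ integration.

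Once the identity $\int \W_{\alpha,\beta}f = \int f$ is established, the $L^1$ bound is immediate by applying the same computation to $|f|$, giving $\lVert\W_{\alpha,\beta}\rVert_{L^1\to L^1}\le 1$ and $\lVert\W\rVert_{L^1\to L^1}\le 1$, which are compatible with the stated bounds at $p=1$. For the $L^\infty$ bound, I would estimate pointwise, using $\int_0^\pi \sin\vartheta \d\vartheta = 2$, to obtain $\lvert \W_{\alpha,\beta}f(\gamma)\rvert \le 2\,\esssup\lvert f\rvert$ and $\lvert\W f(\bQ)\rvert \le (2\pi)^{-1}\esssup\lvert f\rvert$, giving the operator norms $2$ and $(2\pi)^{-1}$ at $p=\infty$.

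The general case follows from the Riesz--Thorin interpolation theorem applied between $p=1$ and $p=\infty$: the resulting bound is $1^{1/p}\cdot 2^{1-1/p} = 2^{1-1/p}$ for $\W_{\alpha,\beta}$ and $1^{1/p}\cdot(2\pi)^{-(1-1/p)} = (2\pi)^{1/p-1}$ for $\W$, matching the claim. Finally, continuity of $\W_{\alpha,\beta}f$ and $\W f$ for $f\in C(\S^2)$ is a direct consequence of Lebesgue's dominated convergence theorem, since the integrands depend continuously on $(\alpha,\beta,\gamma)$ (respectively $\bQ$) and are uniformly bounded by $\lVert f\rVert_\infty\sin\vartheta$.

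The only genuinely non-routine step is the Euler-angle computation for $\W$ on $\SO$, in particular the identification $\eul(\alpha,\beta,\gamma)\sph(0,\vartheta) = \eul(\alpha,\beta,0)\sph(\gamma,\vartheta)$ and the bookkeeping of the normalization factors; everything else is a straightforward transcription of the proof of \autoref{prop:V_norm}.
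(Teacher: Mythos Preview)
Your proposal is correct and follows essentially the same approach as the paper: rotation invariance of $\sigma_{\S^2}$ for the integral identities, the triangle inequality for the $L^1$ bound, the pointwise estimate $\int_0^\pi \sin\vartheta\,\dx\vartheta = 2$ for the $L^\infty$ bound, Riesz--Thorin for intermediate $p$, and dominated convergence for continuity. The paper is slightly more concise for the second identity, deriving it directly from the first via $\W_{\alpha,\beta} f = 4\pi\,\W f(\alpha,\beta,\cdot)$ and integration over $(\alpha,\beta)$ rather than unpacking the Euler-angle factorization explicitly, but this is the same computation.
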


\begin{proof}
  Since the surface measure on $\S^2$ is invariant under rotations, we have
  \begin{align}
    \int_{\T} \mathcal W_{\alpha,\beta} f(\gamma) \, \dx\gamma
    &=
      \int_{\T} \int_{0}^{\pi}
      f ( \eul(\alpha,\beta,0)\, \sph(\gamma,\vartheta) ) \sin(\vartheta)
      \d \vartheta \d\gamma
    \\
    &=
      \int_{\S^2} f ( \eul(\alpha,\beta,0) \, \bxi ) \d\sigma_{\S^2} (\bxi)
      =
      \int_{\S^2} f (\bxi) \d\sigma_{\S^2}(\bxi). 
  \end{align}
  The definition of $\W_{\alpha,\beta}$
  and integration over $\alpha$ and $\beta$
  gives the second identity.
  Thus $\W_{\alpha,\beta}$ and $\W$ are well defined almost everywhere
  by Fubini's theorem.
  Using absolute values and the triangle inequality
  in the above computation
  yields $\lVert \W_{\alpha,\beta} \rVert_{L^1 \to L^1} = \lVert \W \rVert_{L^1 \to L^1} = 1$,
  where we deduce a lower bound of the norm by inserting the constant $f=1$.
  The semicircle transform is further essentially bounded by
  \begin{equation}
    \label{eq:8}
    \lvert \W_{\alpha,\beta} f(\gamma) \rvert
    \le
    \int_{0}^\pi \lvert 
    f\left( \eul(\alpha,\beta,0) \sph(\gamma,\vartheta ) \right)\rvert \sin(\vartheta) \dx\vartheta
    \le
    2 \esssup_{\bxi \in \S^2} \, \lvert f(\bxi) \rvert;
  \end{equation}
  so $\lVert \W_{\alpha,\beta} \rVert_{L^\infty \to L^\infty} = 2$
  and $\lVert \W \rVert_{L^\infty \to L^\infty} = (2\pi)^{-1}$.
  The second assertion now follows from the Riesz--Thorin interpolation theorem.
  The last assertion is an immediate consequence of Lebesgue's dominated convergence theorem.
\end{proof}

Considering the semicircle transform in the Hilbert space setting,
i.e.,
$\W \colon L^2(\S^2) \to L^2(\SO)$,
we are interested in its singular value decomposition.

\begin{theorem} \label{thm:svd2}
  The normalized semicircle transform fulfills 
  \begin{equation} \label{eq:B_svd}
    \mathcal W Y_n^k = \mathrm{w}_n Z_n^k
    ,\quad  n\in\NN,\ k\in\{-n,\dots,n\},
  \end{equation}
  with the singular values
  $\mathrm{w}_n \coloneqq
  \lVert\mathcal W Y_n^k\rVert_{L^2(\S^2)}
  $
  and the orthonormal functions
  \begin{equation} \label{eq:Z}
    Z_n^k \coloneqq \mathrm{w}_n^{-1} \sum_{j=-n}^n \lambda_n^j\, \overline{D_n^{k,j}} \in L^2(\SO),
  \end{equation}
  where $\lambda_0^0 \coloneqq 2 (4\pi)^{-3/2}$ and, for $n\in\N$ and $j\in\{1,\dots,n\}$ with $n+j$ even,
  \begin{equation} \label{eq:B_sv}
    \lambda_n^j
    \coloneqq
    \frac{(-1)^j}{4\pi} \sqrt{\frac{2n+1}{4\pi} \frac{(n-j)!}{(n+j)!}}\, \frac{j\, (n-2)!!\, (n+j-1)!!}{ (n-j)!!\, (n+1)!!} 
    \begin{cases}
      2: & n \text{ even},\\
      {\pi}: & n \text{ odd},
    \end{cases}
  \end{equation}
  $\lambda_n^{-j} \coloneqq (-1)^j \lambda_n^j$, and $\lambda_n^j=0$ otherwise.
  Here $Y_n^k$ denote the spherical harmonics \eqref{eq:Y} and $D_n^{k,j}$ the rotational harmonics \eqref{eq:D}.
  Moreover,
  there are constants $C_1,C_2>0$ such that
  \begin{equation} \label{eq:mu-bound}
    C_1\ (n+1)^{-1/2}
    \le
    \mathrm{w}_n
    \le
    C_2\ (n+1)^{-1/2}
    \quad \text{for all } n\in\NN.
  \end{equation}
\end{theorem}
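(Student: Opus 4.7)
The plan is to reduce $\W Y_n^k$ to an explicit linear combination of rotational harmonics by combining the rotation rule \eqref{eq:Y_D} with the defining integral \eqref{eq:B}, and then to identify the resulting coefficients with the $\lambda_n^j$ in \eqref{eq:B_sv}. First I would use \eqref{eq:Y_D} together with $\bQ^{-1}=\bQ^\tT$ and the unitarity $D_n^{j,k}(\bQ^\tT)=\overline{D_n^{k,j}(\bQ)}$ of the Wigner matrices to rewrite $Y_n^k(\bQ\,\sph(0,\vartheta))=\sum_{j=-n}^{n}\overline{D_n^{k,j}(\bQ)}\,Y_n^j(\sph(0,\vartheta))$. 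Inserted into \eqref{eq:B}, this pulls the $\bQ$-dependence out of the integral and gives
\begin{equation}
  \W Y_n^k(\bQ) = \sum_{j=-n}^{n} \lambda_n^j \, \overline{D_n^{k,j}(\bQ)},
  \qquad
  \lambda_n^j = \frac{1}{4\pi}\sqrt{\tfrac{2n+1}{4\pi}\tfrac{(n-j)!}{(n+j)!}} \int_{-1}^{1} P_n^j(t)\,\dx t,
\end{equation}
after the substitution $t=\cos\vartheta$.

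Next, I would evaluate the scalar integrals $I_n^j := \int_{-1}^{1}P_n^j(t)\,\dx t$. Parity of $P_n^j$ under $t\mapsto -t$ (which follows immediately from \eqref{eq:Pnk}) shows $I_n^j=0$ whenever $n+j$ is odd, matching $\lambda_n^j=0$ in that case. For $n+j$ even, I would insert the Rodrigues-type expression \eqref{eq:Pnk} and integrate by parts $n{-}j$ times (pushing derivatives off $(t^2-1)^n$ and onto $(1-t^2)^{j/2}$, with vanishing boundary terms for $0<j<n$); the remaining integral reduces to a Beta integral whose value is expressible through double factorials, producing the $(n-2)!!$, $(n+j-1)!!$, $(n-j)!!$, $(n+1)!!$ combination of \eqref{eq:B_sv}. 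The split into the two constants $2$ and $\pi$ according to the parity of $n$ comes from the different even/odd Beta integrals $\int_{-1}^1 (1-t^2)^{j/2}t^{n-j}\dx t$; the case $j=0$ reproduces the orthogonality $\int_{-1}^1 P_n(t)\dx t = 2\delta_{n,0}$, which explains the separate definition of $\lambda_0^0$ and $\lambda_n^0=0$ for $n\ge1$ (the factor $j$ in \eqref{eq:B_sv}). The identity $\lambda_n^{-j}=(-1)^j\lambda_n^j$ follows from \eqref{eq:Pn-k}.

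With the coefficients in hand, I would set $\mathrm{w}_n := \lVert\W Y_n^k\rVert_{L^2(\SO)}$ and verify that this does not depend on $k$, using the orthogonality relation \eqref{eq:D_ortho} applied to the pair $(\overline{D_n^{k,j}},\overline{D_n^{k,j'}})$, which yields
\begin{equation}
  \mathrm{w}_n^2 = \frac{8\pi^2}{2n+1}\sum_{j=-n}^{n}\lvert\lambda_n^j\rvert^2.
\end{equation}
Defining $Z_n^k$ as in \eqref{eq:Z}, the same orthogonality relation, combined with the fact that different pairs $(n,k)$ produce disjoint $\overline{D_n^{k,j}}$-supports in the orthonormal basis of $L^2(\SO)$, gives $\langle Z_n^k,Z_{n'}^{k'}\rangle_{L^2(\SO)}=\delta_{n,n'}\delta_{k,k'}$. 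The formula \eqref{eq:B_svd} is then immediate from the expansion of $\W Y_n^k$ and the definition of $\mathrm{w}_n$.

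Finally, the bounds \eqref{eq:mu-bound} follow from asymptotic analysis of $\sum_j|\lambda_n^j|^2$. Using the Stirling-type estimate $(2k)!!\sim\sqrt{\pi k}\,2^{k+1/2}(k/\mathrm{e})^k/\sqrt{\pi k}$ (together with the analogous expansion for $(2k-1)!!$), the ratio $\sqrt{(n-j)!/(n+j)!}\cdot(n+j-1)!!/(n-j)!!$ can be shown to be bounded uniformly in $j$, while the combinatorial factor $j\,(n-2)!!/(n+1)!! \asymp n^{-1/2}$ contributes the overall size. Combining this with the prefactor $(2n+1)/(8\pi^2)$ from the orthogonality constant and summing the $\mathcal O(n)$ non-vanishing terms yields $\mathrm{w}_n^2 \asymp 1/(n+1)$, giving both the upper and lower constants. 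I expect this last asymptotic step to be the main technical obstacle, since care is needed to show that the dominant $j$-contributions neither cancel nor blow up and that the bound is tight; the explicit double-factorial form of $\lambda_n^j$ (in particular the factor $j$) is what prevents the sum from being of order $1$ or from vanishing.
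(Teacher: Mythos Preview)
Your overall architecture matches the paper exactly: use \eqref{eq:Y_D} and the unitarity of the Wigner matrices to pull the $\bQ$-dependence out of \eqref{eq:B}, reduce to the scalar integrals $I_n^j=\int_{-1}^1 P_n^j(t)\,\dx t$, read off orthonormality of the $Z_n^k$ from \eqref{eq:D_ortho}, and then estimate $\mathrm{w}_n^2=\tfrac{8\pi^2}{2n+1}\sum_j|\lambda_n^j|^2$. Two of your technical steps, however, do not go through as written.

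First, the integration-by-parts argument for $I_n^j$ is flawed: the boundary terms do \emph{not} all vanish for $0<j<n$. After $\ell$ integrations by parts the boundary contribution is $[(1-t^2)^{j/2}]^{(\ell-1)}\cdot[(t^2-1)^n]^{(n+j-\ell)}\big|_{-1}^{1}$; the first factor vanishes only while $\ell-1<j/2$, the second only once $n+j-\ell<n$, i.e.\ $\ell>j$, and for $j/2<\ell\le j$ neither does (take $j=2$, $n$ even: at the second step you get $2\,[\,t\cdot((t^2-1)^n)^{(n)}\,]_{-1}^{1}=4\,n!\,2^n\neq 0$). The paper avoids this entirely by first computing $\int_{-1}^1 P_n^n$ directly via $P_n^n(t)=(-1)^n(2n-1)!!(1-t^2)^{n/2}$ and the Wallis integral, and then deriving a two-step recursion $\,(n-j)(n+1)\int P_n^j = (n-2)(n+j-1)\int P_{n-2}^j\,$ from the standard Legendre recurrences, which telescopes down to the $j=n$ case and produces the double factorials in \eqref{eq:B_sv}.

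Second, your asymptotic sketch is too coarse to yield \eqref{eq:mu-bound}. Saying the ratio $\sqrt{(n-j)!/(n+j)!}\,(n+j-1)!!/(n-j)!!$ is ``bounded uniformly'' and that $j\,(n-2)!!/(n+1)!!\asymp n^{-1/2}$ (which only holds for $j\asymp n$) gives, after summing $\mathcal O(n)$ terms, at best $\mathrm{w}_n^2=\mathcal O(1)$, not $\mathcal O(n^{-1})$. The paper first simplifies algebraically, using $(m)!=(m)!!(m-1)!!$, to
\[
\mathrm{w}_n^2 \;\asymp\; \Bigl(\tfrac{(n-2)!!}{(n+1)!!}\Bigr)^{2}\sum_{\substack{j=1\\ n+j\text{ even}}}^n j^2\,\frac{(n-j-1)!!}{(n-j)!!}\,\frac{(n+j-1)!!}{(n+j)!!},
\]
applies the sharp Wallis-type bound $\tfrac{(2m-1)!!}{(2m)!!}\asymp (2m+1)^{-1/2}$ to each of the three quotients, and then brackets the resulting sum $\sum_j j^2/\sqrt{(n+1)^2-j^2}$ between two explicit integrals (plus a tail term) to get matching upper and lower bounds of order $n^{-3}\cdot n^{2}=n^{-1}$. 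The factor $(n^2-j^2)^{-1/2}$ that you suppressed is exactly what makes the sum $\asymp n^2$ rather than $\asymp n^3$, and it is essential for both directions of the estimate.
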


The proof is given in \autoref{sec:W-svd}.
Analogously to \cite[Thm.\ 3.13]{quellmalzdiss}, we see that
the semicircle transform is a smoothing operator.

\begin{corollary}
For  $s\ge0$, 
the operator $\mathcal W\colon H^s(\S^2) \to H^{s+1/2}(\SO)$ is continuous.
\end{corollary}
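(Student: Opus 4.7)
The plan is to combine the singular value decomposition from \autoref{thm:svd2} with the standard Peter--Weyl characterisation of the Sobolev norm on $\SO$ and show that the decay $\mathrm{w}_n=\bigo((n+1)^{-1/2})$ exactly accounts for the $1/2$ gain in regularity. Specifically, I will expand a generic $f\in H^s(\S^2)$ in spherical harmonics, apply $\mathcal W$ mode by mode, and then evaluate the resulting series in the $H^{s+1/2}(\SO)$ norm.

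First I would write $f=\sum_{n,k}\hat f_n^k Y_n^k$ with $\hat f_n^k=\langle f,Y_n^k\rangle_{L^2(\S^2)}$, so that by definition of the Sobolev norm, $\|f\|_{H^s(\S^2)}^2=\sum_{n,k}(n+\tfrac12)^{2s}|\hat f_n^k|^2$. Continuity of $\mathcal W\colon L^2(\S^2)\to L^2(\SO)$ together with \eqref{eq:B_svd} gives
\begin{equation}
    \mathcal W f=\sum_{n\in\NN}\sum_{k=-n}^{n}\hat f_n^k\,\mathrm{w}_n\,Z_n^k
\end{equation}
in $L^2(\SO)$. By construction, each $Z_n^k=\mathrm{w}_n^{-1}\sum_j\lambda_n^j\,\overline{D_n^{k,j}}$ lies entirely in the finite-dimensional subspace of degree $n$ in the Peter--Weyl decomposition, and by \autoref{thm:svd2} the family $\{Z_n^k\}$ is orthonormal in $L^2(\SO)$.

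The key observation is that the Sobolev norm on $\SO$ acts as a multiplier on each degree: for any $g$ supported in the degree-$n$ subspace $\mathrm{span}\{D_n^{j,k}:j,k=-n,\dots,n\}$ one reads off from the defining formula that $\|g\|_{H^{s+1/2}(\SO)}=(n+\tfrac12)^{s+1/2}\|g\|_{L^2(\SO)}$, since only the $n$-th outer sum contributes and the inner sum reproduces $\|g\|_{L^2(\SO)}^2$ by Parseval on that finite-dimensional subspace. Moreover $Z_n^k$ and $Z_{n'}^{k'}$ are orthogonal in $H^{s+1/2}(\SO)$ whenever $(n,k)\ne(n',k')$ (for $n\ne n'$ they sit in different degree subspaces, and for $n=n'$ they remain $L^2$-orthogonal while the $H^{s+1/2}$ inner product coincides with the $L^2$ inner product up to the factor $(n+\tfrac12)^{2s+1}$). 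Consequently
\begin{equation}
    \|\mathcal W f\|_{H^{s+1/2}(\SO)}^2
    =\sum_{n,k}|\hat f_n^k|^2\,\mathrm{w}_n^{2}\,(n+\tfrac12)^{2s+1}.
\end{equation}

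Finally, inserting the upper bound $\mathrm{w}_n\le C_2(n+1)^{-1/2}$ from \eqref{eq:mu-bound} gives $\mathrm{w}_n^{2}(n+\tfrac12)^{2s+1}\le C\,(n+\tfrac12)^{2s}$ for a constant $C>0$ depending only on $s$ and $C_2$, hence
\begin{equation}
    \|\mathcal W f\|_{H^{s+1/2}(\SO)}^2
    \le C\sum_{n,k}(n+\tfrac12)^{2s}|\hat f_n^k|^2
    =C\|f\|_{H^s(\S^2)}^2,
\end{equation}
which proves continuity. The main delicate point is the second paragraph, namely verifying that the Peter--Weyl decomposition of $H^s(\SO)$ collapses to a scalar multiple of the $L^2$ norm on each degree-$n$ subspace and that $Z_n^k$ truly sits in that subspace; once this is established, the SVD bound converts the mismatch between the exponents $2s$ and $2s+1$ into the required derivative gain.
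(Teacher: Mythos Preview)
Your argument is correct and is precisely the approach the paper has in mind: it does not spell out a proof but only points to an analogous result, and the natural way to obtain it is exactly via the singular value decomposition of \autoref{thm:svd2}, the fact that each $Z_n^k$ lies in the degree-$n$ block of the Peter--Weyl decomposition, and the bound $\mathrm{w}_n\le C_2(n+1)^{-1/2}$. One small remark: with the Sobolev norm on $\SO$ as literally written in the paper, the identity $\|g\|_{H^{s+1/2}(\SO)}=(n+\tfrac12)^{s+1/2}\|g\|_{L^2(\SO)}$ on the degree-$n$ block is only true up to a harmless $n$-dependent normalisation factor (coming from $\tfrac{8\pi^2}{2n+1}$ versus the Parseval weight $\tfrac{2n+1}{8\pi^2}$); this does not affect the continuity conclusion, but you may want to phrase that step as ``comparable up to universal constants'' rather than as an exact equality.
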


Theorem \ref{thm:svd2}
implies that
$\mathcal W\colon L^2(\S^2) \to L^2(\SO)$ is an injective, compact operator with
\begin{equation}
  \mathcal W f  = \sum_{n \in \NN} \mathrm{w}_n
  \sum_{k=-n}^n \langle f, Y^k_n \rangle Z^k_n.
  \label{eq:9}
\end{equation}
Its \emph{Moore--Penrose pseudoinverse} is given by
\begin{align}
  \mathcal W^\dagger g
  &=
    \sum_{n=0}^{\infty} \frac{1}{\mathrm{w}_n} \sum_{k=-n}^{n}
    \inn{g, Z_n^{k}}
    Y_n^k
  =
  \sum_{n=0}^{\infty} \frac{1}{(\mathrm{w}_n)^2} \sum_{k=-n}^{n}
  \sum_{\substack{{j}=-n 
  }}^n
  {\lambda_n^j}
  \inn{g, \overline{D_n^{k,j}}}
  Y_n^k
  .    \label{eq:Bdag}
\end{align}

We will also need the adjoint operator.

\begin{proposition}
  Let $1 \le p,q \le \infty$ with $1/p + 1/q = 1$.
  For $1 \le p < \infty$,
  the adjoint $\W^*\colon L^q(\SO) \to L^q(\S^2)$
  of $\W\colon L^p(\S^2) \to L^p(\SO)$ is given by
  \begin{equation}
    \label{eq:B*}
    \mathcal W^* g (\bxi)
    =
    \frac1{4\pi} \int_{\T} \int_{0}^{\pi}
    g (\eul(\alpha, \beta, \cA_{\alpha,\beta} \, \bxi))
    \,\sin(\beta) \d\beta \d\alpha,
  \end{equation}
  and the adjoint $\W_{\alpha,\beta}^*\colon L^q(\T) \to L^q(\S^2)$
  of $\W_{\alpha,\beta} \colon L^p(\S^2) \to L^p(\T)$ by
  \begin{equation}
    \label{eq:Bab*}
    \W_{\alpha,\beta}^*g(\bxi)
    = 
    g( \cA_{\alpha,\beta} \, \bxi).
  \end{equation}
  Moreover,
  it holds $\W^* \colon C(\SO) \to C(\S^2)$,
  but $\W_{\alpha,\beta}^* \colon C(\T) \not\to C(\S^2)$.
\end{proposition}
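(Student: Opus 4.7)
The plan is to mimic the adjoint computation already used for the vertical slice transform, writing the duality pairing as an integral on $\SO$ via the Euler angle parameterization and then changing variables so that the inner double integral becomes an integral over $\S^2$.

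First I would expand, using \eqref{eq:SO_int} and the definition \eqref{eq:B},
\begin{equation}
\langle \W f, g \rangle
=
\int_{\T}\!\int_{0}^{\pi}\!\int_{\T}
\W f(\eul(\alpha,\beta,\gamma))\, g(\eul(\alpha,\beta,\gamma))\sin\beta\,\d\gamma\d\beta\d\alpha,
\end{equation}
and then substitute the inner integral
\begin{equation}
\W f(\eul(\alpha,\beta,\gamma))
=
\frac{1}{4\pi}\int_{0}^{\pi} f(\eul(\alpha,\beta,0)\sph(\gamma,\vartheta))\sin\vartheta\,\d\vartheta.
\end{equation}
For fixed $(\alpha,\beta)$, the map $(\gamma,\vartheta)\mapsto \bxi=\eul(\alpha,\beta,0)\sph(\gamma,\vartheta)$ is a measure-preserving parameterization of $\S^2$ (with $\sin\vartheta\,\d\vartheta\d\gamma=\d\sigma_{\S^2}(\bxi)$ by \eqref{eq:int_S}), and by the definition of $\cA_{\alpha,\beta}$ in \eqref{eq:4} one has $\gamma=\cA_{\alpha,\beta}(\bxi)$ on the image. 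Hence the $(\gamma,\vartheta)$-integral becomes
\begin{equation}
\int_{\S^2} f(\bxi)\, g(\eul(\alpha,\beta,\cA_{\alpha,\beta}(\bxi)))\,\d\sigma_{\S^2}(\bxi).
\end{equation}
Swapping the order of integration by Fubini (justified using \autoref{prop:important} together with $g\in L^q$ or $g\in C(\SO)$) yields $\langle \W f,g\rangle=\langle f,\W^* g\rangle$ with $\W^*$ as claimed. The calculation for $\W_{\alpha,\beta}^*$ is the same with the outer integral over $(\alpha,\beta)$ and the normalizing factor removed, so only the point evaluation $g(\cA_{\alpha,\beta}(\bxi))$ remains.

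For the continuity claims I would invoke Lebesgue's dominated convergence theorem. The function $\cA_{\alpha,\beta}$ is continuous on $\S^2$ except at the two poles $\pm\sph(\alpha,\beta)$, so for $g\in C(\SO)$ the integrand in \eqref{eq:B*} depends continuously on $\bxi$ for $u_\T\times u_{[0,\pi]}$-almost every $(\alpha,\beta)$; since the set of $(\alpha,\beta)$ for which a given $\bxi$ is a pole has measure zero, dominated convergence gives $\W^* g\in C(\S^2)$. By contrast, for $\W_{\alpha,\beta}^*$ the angle $(\alpha,\beta)$ is fixed, so the discontinuity of $\cA_{\alpha,\beta}$ at the two antipodal points $\pm\sph(\alpha,\beta)$ is not averaged out; I would exhibit a concrete continuous $g$ on $\T$ (e.g., $g(\gamma)=\cos\gamma$) for which $g\circ\cA_{\alpha,\beta}$ has the two-sided azimuthal discontinuity at the rotated poles, proving $\W_{\alpha,\beta}^* \colon C(\T)\not\to C(\S^2)$.

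I do not foresee a serious obstacle: the main subtlety is correctly identifying $\gamma$ with $\cA_{\alpha,\beta}(\bxi)$ after the change of variables and handling the measure-zero singularities of $\cA_{\alpha,\beta}$ at the poles when transferring continuity. Fubini is harmless because all integrands are bounded by $\|f\|_p\|g\|_q$-controlled quantities via \autoref{prop:important}, so the $L^p$--$L^q$ duality and the $C$-$\M$ duality versions are both covered by the same computation.
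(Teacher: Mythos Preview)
Your proposal is correct and follows essentially the same route as the paper: expand $\langle \W f,g\rangle$ via the Euler-angle parameterization \eqref{eq:SO_int}, insert the definition \eqref{eq:B}, change variables $(\gamma,\vartheta)\mapsto\bxi=\eul(\alpha,\beta,0)\sph(\gamma,\vartheta)$ to recognize $\gamma=\cA_{\alpha,\beta}(\bxi)$, and apply Fubini; the continuity claims are handled the same way (dominated convergence for $\W^*$, the pole discontinuity of $\cA_{\alpha,\beta}$ for $\W_{\alpha,\beta}^*$). The only cosmetic difference is that the paper notes discontinuity of $\W_{\alpha,\beta}^*g$ for \emph{any} non-constant $g\in C(\T)$, whereas you exhibit a specific one.
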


\begin{proof}
  Let $f\in L^p(\S^2)$ and $g\in L^q(\SO)$.
  Based on \eqref{eq:SO_int} and \eqref{eq:B},
  and using the substitution $\zb\eta \coloneqq \sph(\gamma,\vartheta)$
  with $\gamma = \azi(\zb\eta)$,
  we compute the adjoint by 
  \begin{align}
    \langle \W f, g\rangle
    &=
      \frac1{4\pi}
      \int_{\T} \int_{0}^{\pi} \int_{\T} \int_0^\pi
      f(\eul(\alpha,\beta,0) \sph(\gamma,\vartheta))\,
      g(\eul(\alpha,\beta,\gamma))\,
      \sin(\vartheta) \sin(\beta) \d\vartheta \d \alpha  \d\beta \d\gamma
    \\
    &=
      \frac1{4\pi}
      \int_{\T} \int_{0}^{\pi} \int_{\S^2} f(\eul(\alpha,\beta,0)\, \zb\eta)\,
      g(\eul(\alpha,\beta,\azi(\zb\eta)) \sin(\beta) \d\sigma_{\S^2}(\zb\eta)  \d\beta  \d\alpha 
    \\
    &=
      \frac1{4\pi}
      \int_{\S^2} \int_{\T} \int_{0}^{\pi}  f(\zb\xi)\,
      g(\eul(\alpha,\beta,\cA_{\alpha,\beta}(\zb\xi))
      \sin(\beta)  \d\beta \d\alpha \d\sigma_{\S^2}(\zb\xi)
      =
      \langle f, \W^* g \rangle,
  \end{align}
  where we used  $\zb\xi \coloneqq \eul(\alpha,\beta,0)\, \zb\eta$ in the last line.
  The adjoint of $\W_{\alpha,\beta}$ follows analogously.
  The continuity of $\W^* g$ for $g \in C(\SO)$ can be established by
  Lebesgue's dominated convergence theorem.
  For non-constant $g\in C(\T)$,
  the adjoint $\W_{\alpha,\beta}^*$ is discontinuous at $\sph(\alpha,\beta)$.
\end{proof}

%----------------------------------------------------------------------------
\subsection{Normalized Semicircle Transform of Measures}
%---------------------------------------------------------------------------
The generalization from functions to measures can be done
analogously to \autoref{sec:vert-slice-meas}.
For the zenith $\sph(\alpha,\beta)$,
we generalize the (restricted) \emph{semicircle transform} $\W_{\alpha,\beta}$ by
\begin{equation}
  \label{eq:Wab-meas}
  \W_{\alpha,\beta} \colon \M(\S^2) \to \M(\T)
  ,\quad
  \mu \mapsto (\cA_{\alpha,\beta})_\# \mu
  = \mu \circ \cA_{\alpha,\beta}^{-1}.
\end{equation}
Considering the measures $\W_{\alpha,\beta} \, \mu$ as disintegration family,
we define the \emph{(normalized) semicircle transform}
$\W \colon \M(\S^2) \to \M(\SO)$ by
\begin{equation} \label{def:sctm}
  \mathcal W \mu
  \coloneqq
  (T_{\mathcal W})_\# (u_{\S^2} \times \mu)
  \quad\text{with}\quad
  T_{\mathcal W}(\sph(\alpha,\beta), \zb \xi)
  \coloneqq
  \eul(\alpha,\beta, \cA_{\alpha,\beta}(\zb\xi)).
\end{equation}

\begin{proposition}
  Let $\mu \in \M(\S^2)$.
  Then $\W \mu$ can be disintegrated
  into the family $\W_{\alpha,\beta} \, \mu$ with respect to the uniform measure $u_{\S^2}$,
  i.e.,
  for all $g \in C(\SO)$,
  it holds
  \begin{equation}
    \int_{\SO}
    g(\bQ) \d(\W\mu)(\bQ)
    =
    \int_{\S^2} \int_\T
    g(\eul(\alpha,\beta,\gamma))
    \d (\W_{\alpha,\beta} \mu)(\gamma)
    \d \sigma_{\S^2}(\sph(\alpha,\beta)).
  \end{equation}
\end{proposition}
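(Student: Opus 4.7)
The plan is to mirror the proof of \autoref{prop:V-dis} for the vertical slice case, since the definition \eqref{def:sctm} of $\W\mu$ as a push-forward of a product measure has exactly the same structure as \eqref{def:V_measure}. The only structural difference is that the "parameter" space is now $\S^2$ instead of $\T$, and the slicing map $\cS_\psi$ is replaced by $\cA_{\alpha,\beta}$ together with the re-attachment to $\SO$ via $\eul(\alpha,\beta,\cdot)$.

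First I would take an arbitrary $g \in C(\SO)$ and unfold the push-forward definition: by \eqref{def:sctm},
\begin{equation}
\int_{\SO} g(\bQ) \d(\W\mu)(\bQ)
= \int_{\S^2 \times \S^2} g(T_\W(\sph(\alpha,\beta),\bxi)) \d(u_{\S^2} \times \mu)(\sph(\alpha,\beta),\bxi),
\end{equation}
and then invoke Fubini's theorem, which is applicable because $g \circ T_\W$ is bounded (and measurable by the continuity of $\eul$ and $\cA_{\alpha,\beta}$) and both factor measures are finite. Substituting $T_\W(\sph(\alpha,\beta),\bxi) = \eul(\alpha,\beta,\cA_{\alpha,\beta}(\bxi))$ gives
\begin{equation}
\int_{\SO} g(\bQ) \d(\W\mu)(\bQ)
= \int_{\S^2} \int_{\S^2} g(\eul(\alpha,\beta,\cA_{\alpha,\beta}(\bxi))) \d\mu(\bxi) \d u_{\S^2}(\sph(\alpha,\beta)).
\end{equation}

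Next, for each fixed zenith $\sph(\alpha,\beta)$, I would apply the push-forward formula again, this time for the inner integral, using the definition \eqref{eq:Wab-meas} of $\W_{\alpha,\beta}\mu = (\cA_{\alpha,\beta})_\#\mu$. Setting $h_{\alpha,\beta}(\gamma) \coloneqq g(\eul(\alpha,\beta,\gamma)) \in C(\T)$, the change-of-variables identity for push-forwards yields
\begin{equation}
\int_{\S^2} h_{\alpha,\beta}(\cA_{\alpha,\beta}(\bxi)) \d\mu(\bxi) = \int_\T h_{\alpha,\beta}(\gamma) \d(\W_{\alpha,\beta}\mu)(\gamma).
\end{equation}
Inserting this into the previous display and rewriting $\d u_{\S^2}$ (up to the normalization $(4\pi)^{-1}$ absorbed as needed to match $\d\sigma_{\S^2}$ in the statement) gives the claimed disintegration identity.

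The computation is entirely mechanical; there is no real obstacle beyond checking measurability and the applicability of Fubini. The only small subtlety is the behaviour of $\cA_{\alpha,\beta}$ at the north and south poles of the rotated frame, where the azimuth is not uniquely defined, but this concerns only a $u_{\S^2}$-null set of $(\alpha,\beta)$ and a $\mu$-measure consideration at those poles, which contributes nothing to the integrals by the convention fixed in \eqref{eq:phi}. Hence the identity is established for every $g\in C(\SO)$, which suffices since $C(\SO)$ is the pre-dual of $\M(\SO)$.
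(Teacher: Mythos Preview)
Your proof is correct and follows essentially the same route as the paper: unfold the push-forward definition \eqref{def:sctm}, apply Fubini, and then recognize the inner integral as the push-forward $(\cA_{\alpha,\beta})_\#\mu = \W_{\alpha,\beta}\mu$. Your remark about the normalization is apt: the paper's own proof ends with $u_{\S^2}$ rather than the $\sigma_{\S^2}$ appearing in the displayed statement, so the $(4\pi)^{-1}$ discrepancy you flag is an inconsistency in the paper, not in your argument.
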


\begin{proof}
  Inserting \eqref{def:sctm}
  and using Fubini's theorem,
  we obtain
  \begin{align}
    \langle \W \mu, g \rangle
    &=
      \int_{\S^2} \int_{\S^2}
      g(\eul(\alpha,\beta,\cA_{\alpha,\beta} \, \bxi))
      \d \mu(\zb \xi) \d u_{\mathbb S^2} (\sph(\alpha,\beta))\\
    &=
      \int_{\S^2} \int_{\T}
      g(\eul(\alpha, \beta, \gamma))
      \d (\cA_{\alpha,\beta})_\# \mu(\gamma) \d u_{\mathbb S^2}(\sph(\alpha,\beta)).
  \end{align}
  for every $g\in C(\SO)$
  establishing the asserstion.
\end{proof}

While $\W$ can be interpreted as the adjoint of $\W^*\colon C(\SO) \to C(\S^2)$ in \eqref{eq:B*},
the same reasoning does not hold for $\W_{\alpha,\beta}$ by the lack of continuity of $\W_{\alpha,\beta}^*g$ for continuous $g$.

\begin{proposition}
  \label{prop:W-adj-meas}
  The semicircle transforms \eqref{def:sctm} and \eqref{eq:Wab-meas} satisfy 
  \begin{align}
    \langle \W \mu, g \rangle
    &=
      \langle \mu, \W^* g \rangle
      \quad
      \text{for all } g \in C(\SO) \text{ and}
    \\
    \langle \W_{\alpha,\beta} \mu, g \rangle
    &=
      \int_{\S^2} \W_{\alpha,\beta}^* \, g (\bxi) \d \mu(\bxi)
      \quad
      \text{for all } g \in C(\T)
      \label{eq:Wab-meas-adj}
  \end{align}
  with the adjoint operator from \eqref{eq:B*} and \eqref{eq:Bab*}.
\end{proposition}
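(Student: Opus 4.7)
The plan is to mimic the strategy of \autoref{prop:V-as-adj}: unfold the push-forward definitions of $\W \mu$ and $\W_{\alpha,\beta}\mu$ and apply Fubini's theorem to recognize the adjoint operators from \eqref{eq:B*} and \eqref{eq:Bab*}. The proof splits into two independent arguments, one for each identity.

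For the first identity, I would start from definition \eqref{def:sctm} and the push-forward change-of-variables formula to write
\begin{equation}
    \langle \W\mu, g \rangle = \int_{\S^2}\int_{\S^2} g\bigl(\eul(\alpha,\beta,\cA_{\alpha,\beta}(\bxi))\bigr) \d \mu(\bxi) \d u_{\S^2}(\sph(\alpha,\beta)).
\end{equation}
Since $g\in C(\SO)$ is bounded on the compact group $\SO$ and $u_{\S^2}\times\mu$ is a finite measure, Fubini's theorem applies and lets me swap the order of integration. Writing out the uniform measure $u_{\S^2} = (4\pi)^{-1}\sigma_{\S^2}$ in spherical coordinates using \eqref{eq:int_S}, the inner $(\alpha,\beta)$-integral over $\S^2$ reads
\begin{equation}
    \frac{1}{4\pi}\int_{\T}\int_0^\pi g\bigl(\eul(\alpha,\beta,\cA_{\alpha,\beta}(\bxi))\bigr)\, \sin(\beta)\d\beta\d\alpha,
\end{equation}
which is precisely $\W^* g(\bxi)$ by \eqref{eq:B*}. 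This establishes $\langle \W \mu, g\rangle = \langle \mu, \W^* g\rangle$.

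For the second identity, the argument is shorter and essentially tautological: by definition \eqref{eq:Wab-meas} and the push-forward change-of-variables formula,
\begin{equation}
    \langle \W_{\alpha,\beta}\mu, g \rangle = \int_\T g(\gamma) \d (\cA_{\alpha,\beta})_\# \mu(\gamma) = \int_{\S^2} g(\cA_{\alpha,\beta}(\bxi)) \d \mu(\bxi),
\end{equation}
and the integrand is $\W_{\alpha,\beta}^* g(\bxi)$ by \eqref{eq:Bab*}. The only subtlety to flag is that the right-hand side cannot be written as a $C(\S^2)$-$\M(\S^2)$ dual pairing $\langle \mu, \W_{\alpha,\beta}^* g\rangle$, because $\W_{\alpha,\beta}^* g = g \circ \cA_{\alpha,\beta}$ fails to be continuous at the zenith $\sph(\alpha,\beta)$ and its antipode. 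Nevertheless, $\cA_{\alpha,\beta}$ is Borel measurable, so $\W_{\alpha,\beta}^* g$ is a bounded Borel function, and the integral against the finite measure $\mu$ is well defined, which is why the statement is formulated in its integral form \eqref{eq:Wab-meas-adj}. This measurability/continuity issue is the only real obstacle; once it is noted, both identities reduce to routine applications of the push-forward definition and Fubini's theorem.
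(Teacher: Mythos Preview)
Your proof is correct and follows essentially the same approach as the paper: unfold the push-forward definitions and recognize the adjoint operators, with Fubini justifying the swap of integration order for $\W$. Your extra remark about the discontinuity of $\W_{\alpha,\beta}^* g$ at the zenith is exactly the content of the paper's Remark following the proposition.
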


\begin{proof}
  Plugging in the push-forward definition \eqref{def:sctm}, we obtain
  \begin{equation}
    \langle \mathcal W \mu, g \rangle
    = 
    \int_{\S^2} \int_{\S^2}
    g (\eul(\alpha,\beta,\cA_{\alpha,\beta} \, \zb\xi))
    \d \mu(\zb \xi) \d u_{\S^2}(\sph(\alpha,\beta)) 
    =
    \inn{\mu, \mathcal W^*g}.
  \end{equation}
  The second identity follows analogously.
\end{proof}

\begin{remark}
  The restricted semicircle transform $\W_{\alpha,\beta}: \M(\S^2) \to \M(\T)$
  is indeed related to the $L^p$ adjoint $W_{\alpha,\beta}^*$ in \eqref{eq:Bab*}.
  Although the integral on the right-hand side of \eqref{eq:Wab-meas-adj}
  is always well defined, i.e., 
  $W_{\alpha,\beta}^* \, g \in L^\infty(\S^2)$,
  the integral is no dual pairing in the measure/continuous function sense
  since $W_{\alpha,\beta}^* \, g$ is discontinuous at the zenith $\sph(\alpha,\beta)$ in general.
  Therefore, unlike in \autoref{prop:V-as-adj} for the vertical slice transform, 
  equation \eqref{eq:Wab-meas-adj} does not constitute a proper definition of $W_{\alpha,\beta}$ for measures via the dual pairing.
\end{remark}

The definitions
of the semicircle transform in the function and measure setting
are consistent 
in the sense that both coincide for absolutely continuous measures.

\begin{proposition}
  For $f \in L^1(\S^2)$,
  the semicircle transforms satisfy
  \begin{equation}
    \W [f \sigma_{\S^2}] = (\W f) \, \sigma_{\SO}
    \quad\text{and}\quad
    \W_{\alpha,\beta}[f \sigma_{\S^2}] = (\W_{\alpha,\beta} f) \, \sigma_\T.
  \end{equation}
  In particular,
  the transformed measures are again absolutely continuous.
\end{proposition}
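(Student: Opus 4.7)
The plan is to mirror the argument used for \autoref{cor:abs1} in the vertical slice setting, exploiting \autoref{prop:W-adj-meas} together with the boundedness of the adjoint operators established earlier. For both identities, it suffices to test against continuous functions and use the fact that a finite Borel measure on a compact metric space is uniquely determined by its action on $C(\cdot)$.

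For the first identity, I would fix $g \in C(\SO)$ and compute
\begin{equation}
    \langle \W[f \sigma_{\S^2}], g \rangle_{\M}
    = \langle f \sigma_{\S^2}, \W^* g \rangle_{\M}
    = \langle f, \W^* g \rangle_{L}
    = \langle \W f, g \rangle_{L}
    = \langle (\W f)\, \sigma_{\SO}, g \rangle_{\M},
\end{equation}
where the first equality is \autoref{prop:W-adj-meas}, the second is simply rewriting the pairing of a function with an absolutely continuous measure, the third is the $L^p/L^q$-duality of $\W$ and $\W^*$ (using $\W f \in L^1(\SO)$ from \autoref{prop:important} and $\W^* g \in L^\infty(\S^2) \cap C(\S^2)$), and the last rewrites the pairing again. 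Uniqueness of the measure representation yields $\W[f \sigma_{\S^2}] = (\W f)\, \sigma_{\SO}$.

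For the second identity, a subtle point is that $\W_{\alpha,\beta}^* g$ is \emph{not} continuous for general $g \in C(\T)$, so the duality step is not immediate from the Riesz representation. Here the correct tool is \eqref{eq:Wab-meas-adj}, which gives
\begin{equation}
    \langle \W_{\alpha,\beta}[f \sigma_{\S^2}], g \rangle
    = \int_{\S^2} \W_{\alpha,\beta}^* g(\bxi)\, f(\bxi) \d \sigma_{\S^2}(\bxi)
    = \langle f, \W_{\alpha,\beta}^* g \rangle_{L},
\end{equation}
which is well defined since $\W_{\alpha,\beta}^* g \in L^\infty(\S^2)$ and $f \in L^1(\S^2)$. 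The $L^1/L^\infty$ duality of $\W_{\alpha,\beta}$ and $\W_{\alpha,\beta}^*$ then gives $\langle f, \W_{\alpha,\beta}^* g \rangle_L = \langle \W_{\alpha,\beta} f, g \rangle_L = \langle (\W_{\alpha,\beta} f)\, \sigma_\T, g \rangle_\M$, and the uniqueness argument closes the proof.

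The main obstacle is only the bookkeeping of dual pairings in the second identity, where the discontinuity of $\W_{\alpha,\beta}^* g$ forces one to use the $L^p$-duality rather than the measure/continuous-function duality directly; everything else is a routine consequence of \autoref{prop:important} and \autoref{prop:W-adj-meas}. Absolute continuity of the transformed measures is then immediate from the displayed identities.
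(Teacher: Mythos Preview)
Your argument is correct and follows exactly the same route as the paper's proof: both identities are obtained by testing against continuous $g$, invoking \autoref{prop:W-adj-meas}, and then passing through the $L^1/L^\infty$ duality, with the only wrinkle being that for $\W_{\alpha,\beta}$ the measure-side pairing must be written as the integral in \eqref{eq:Wab-meas-adj} since $\W_{\alpha,\beta}^* g$ is merely bounded, not continuous. Your write-up is in fact more detailed than the paper's, which just points to \autoref{cor:abs1} and remarks that ``the second dual pairing has to be replaced by an integral.''
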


\begin{proof}
  Both identities directly follow from \autoref{prop:W-adj-meas}
  in analogy to the proof of \autoref{cor:abs1}.
  For $\W_{\alpha,\beta}$ the second dual pairing has to be replaced by an integral.
\end{proof}

By the following theorem, the injectivity of $\W\colon L^2(\S^2) \to L^2(\SO)$ generalizes to $\W\colon\M(\S^2) \to \M(\SO)$,
which also implies the injectivity of $\W\colon L^p(\S^2) \to L^p(\SO)$.

\begin{theorem} \label{thm:B_inj}
  The semicircle transform
  $\mathcal W \colon \M(\S^2)\to \M(\SO)$ defined by \eqref{def:sctm} is injective.
\end{theorem}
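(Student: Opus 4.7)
The plan is to leverage the adjoint relationship $\langle\W\mu,g\rangle=\langle\mu,\W^\ast g\rangle$ from \autoref{prop:W-adj-meas} together with the singular value decomposition of \autoref{thm:svd2} to reduce injectivity on $\M(\S^2)$ to a density statement about $\W^\ast(C(\SO))$ inside $C(\S^2)$.

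First, suppose $\W\mu=0$ for some $\mu\in\M(\S^2)$. By \autoref{prop:W-adj-meas}, this gives
\begin{equation}
\langle\mu,\W^\ast g\rangle = 0\qquad\text{for all } g\in C(\SO).
\end{equation}
Since $C(\S^2)^\ast=\M(\S^2)$, it suffices to show that $\W^\ast\bigl(C(\SO)\bigr)$ is dense in $C(\S^2)$.

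Next, I would verify that every spherical harmonic lies in $\W^\ast\bigl(C(\SO)\bigr)$. The function $Z_n^k$ defined in \eqref{eq:Z} is a finite linear combination of (conjugates of) Wigner D-functions, hence smooth, and in particular $Z_n^k\in C(\SO)$. From the SVD identity $\W Y_n^k=\mathrm{w}_n Z_n^k$ of \autoref{thm:svd2} and the $L^2$-adjoint relationship, the orthonormality of $\{Y_n^k\}$ and $\{Z_n^k\}$ yields $\W^\ast Z_n^k=\mathrm{w}_n Y_n^k$ in $L^2(\S^2)$. Both sides are continuous — the right by definition of $Y_n^k$, the left by the mapping property $\W^\ast\colon C(\SO)\to C(\S^2)$ — so the identity holds pointwise. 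The bound \eqref{eq:mu-bound} guarantees $\mathrm{w}_n>0$ for every $n$, so every $Y_n^k$ lies in the range of $\W^\ast$.

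Finally, the linear span of $\{Y_n^k:n\in\NN,\,-n\le k\le n\}$ coincides with the space of polynomials restricted to $\S^2$, which is dense in $C(\S^2)$ by the Stone--Weierstrass theorem. Consequently $\W^\ast\bigl(C(\SO)\bigr)$ is dense in $C(\S^2)$, and the orthogonality condition forces $\langle\mu,f\rangle=0$ for all $f\in C(\S^2)$, i.e., $\mu=0$.

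The only subtle point is the upgrade from $\W^\ast Z_n^k=\mathrm{w}_n Y_n^k$ in $L^2$ to a pointwise identity of continuous functions, but this is automatic once both representatives are known to be continuous; no further delicate analysis (such as the parity obstruction encountered in \autoref{thm:V_inj} for the vertical slice transform) is needed here because the semicircle transform is already injective on the full $L^2(\S^2)$.
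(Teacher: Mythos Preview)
Your proof is correct and, in fact, more elementary than the paper's argument. Both proofs reduce injectivity to showing that $\W^*\bigl(C(\SO)\bigr)$ is dense in $C(\S^2)$, but they establish this density differently.

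The paper proceeds by a Sobolev-embedding type estimate: for any $f\in H^s(\S^2)$ with $s>2$ (a dense subspace of $C(\S^2)$), it sets $g \coloneqq (\W\W^*)^{-1}\W f$ and verifies that the expansion $g=\sum_{n,k}\mathrm{w}_n^{-1}\langle f,Y_n^k\rangle Z_n^k$ converges uniformly on $\SO$. This requires bounding $|Z_n^k(\bQ)|$ via Cauchy--Schwarz and the addition formula $\sum_{j,k}|D_n^{k,j}(\bQ)|^2 = n+1$, then combining with the decay of $\mathrm{w}_n^{-1}$ and the $H^s$-norm of $f$.

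You bypass this entirely by noting that each $Z_n^k$ is a \emph{finite} linear combination of Wigner $D$-functions and hence already in $C(\SO)$; then $\W^*Z_n^k=\mathrm{w}_n Y_n^k$ (with $\mathrm{w}_n>0$) puts every spherical harmonic into the range, and Stone--Weierstrass does the rest. Your approach is shorter and needs no convergence estimate. The paper's approach, on the other hand, yields the slightly stronger statement that every $H^s$ function (not just every polynomial) has a continuous preimage under $\W^*$, which could be useful for range characterizations but is not needed for bare injectivity.
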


The proof is given in \autoref{app:abc}.

%-------------------------------------------------
\section{Spherical Sliced Wasserstein Distances} \label{sec:SOT}
%-------------------------------------------------
The computation of the Wasserstein distance on the sphere
consists in determining a transport plan
between the considered probability measures.
To avoid the occurring optimization problem,
the general idea behind so-called sliced Wasserstein distances
\cite{KolNadSimBadRoh19, Bon22} is
to transform the measures first to one-dimensional domains,
and to exploit the explicit solution formula of the one-dimensional transport.
Based on the vertical slice and the normalized semicircle transform,
we can define two kinds of spherical sliced distances.
For $p \in [1, \infty)$ and $\mu, \nu \in \M(\S^2)$, we define the \emph{vertical sliced Wasserstein distance}
\begin{equation} 
    \label{eq:VSW}
    \VSW_p^p(\mu,\nu)
    \coloneqq
    \int_{\T} 
    \WS_p^p( \V_{\psi}\mu , \V_{\psi}\nu ) \d \psi,
\end{equation}
and the \emph{semicircular sliced Wasserstein distance}
\begin{equation} \label{eq:SSW}
    \SSW_p^p(\mu,\nu)
    \coloneqq
    \int_{\S^2}
    \WS_p^p( \W_{\alpha,\beta}\mu , \W_{\alpha,\beta}\nu )
    \d \sigma_{\S^2}(\sph(\alpha,\beta)),
\end{equation}
which are integrals over Wasserstein distances on  $\II$ and $\T$, respectively.

\begin{theorem}
    For every $1 \le p < \infty$,
    the vertical sliced Wasserstein distance $\VSW_p$ is a metric on $\M_{\sym}(\S^2)$, which was defined in \eqref{eq:Msym},
	and the semicircular Wasserstein distance $\SSW_p$
    is a metric on $\M(\S^2)$.
\end{theorem}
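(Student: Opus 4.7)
The plan is to verify the four metric axioms. Non-negativity and symmetry are immediate from the corresponding properties of $\WS_p$ on $\P(\II)$ and $\P(\T)$, so I will focus on the triangle inequality and the identity of indiscernibles.

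For the triangle inequality, I would first fix $\mu,\nu,\omega$ and apply the pointwise triangle inequality for $\WS_p$ to obtain, for every $\psi\in\T$,
\begin{equation}
    \WS_p(\V_\psi\mu,\V_\psi\nu)\le \WS_p(\V_\psi\mu,\V_\psi\omega)+\WS_p(\V_\psi\omega,\V_\psi\nu).
\end{equation}
Raising to the $p$-th power, integrating in $\psi$, and applying Minkowski's inequality in $L^p(\T)$ then gives $\VSW_p(\mu,\nu)\le \VSW_p(\mu,\omega)+\VSW_p(\omega,\nu)$. An identical argument in $L^p(\S^2,\sigma_{\S^2})$ handles $\SSW_p$. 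Before this step, one should briefly note that $\psi\mapsto \WS_p(\V_\psi\mu,\V_\psi\nu)$ is measurable: $\V_\psi\mu$ depends weakly-$*$ continuously on $\psi$ through the push-forward by $\cS_\psi$, and $\WS_p$ is continuous on $\P(\II)$ with respect to weak-$*$ convergence (since $\II$ is compact, the $p$-th moments are uniformly bounded). The analogous statement for $\W_{\alpha,\beta}\mu$ on $\S^2$ is handled the same way.

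For the identity of indiscernibles, one direction is trivial: $\mu=\nu$ forces $\V_\psi\mu=\V_\psi\nu$ (resp.\ $\W_{\alpha,\beta}\mu=\W_{\alpha,\beta}\nu$), and thus the integral vanishes. The converse is where the real work lies and will be the main obstacle. Suppose $\VSW_p(\mu,\nu)=0$. Since the integrand is non-negative, this yields $\WS_p(\V_\psi\mu,\V_\psi\nu)=0$ for $\psi$ outside some null set $N\subset\T$; because $\WS_p$ is a metric on $\P(\II)$, we conclude $\V_\psi\mu=\V_\psi\nu$ for all $\psi\in\T\setminus N$. I would then invoke the disintegration formula of \autoref{prop:V-dis}: for every $g\in C(\T\times\II)$,
\begin{equation}
    \langle \V\mu,g\rangle=\int_\T\!\int_\II g(\psi,t)\d\V_\psi\mu(t)\d u_\T(\psi)
    =\int_\T\!\int_\II g(\psi,t)\d\V_\psi\nu(t)\d u_\T(\psi)=\langle\V\nu,g\rangle,
\end{equation}
where the middle equality holds because the inner integrals differ only on the $u_\T$-null set $N$. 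Hence $\V\mu=\V\nu$ in $\M(\T\times\II)$, and the injectivity of $\V$ on $\M_{\sym}(\S^2)$ (\autoref{thm:V_inj}) yields $\mu=\nu$. For $\SSW_p$, the argument is structurally identical: $\SSW_p(\mu,\nu)=0$ gives $\W_{\alpha,\beta}\mu=\W_{\alpha,\beta}\nu$ for $\sigma_{\S^2}$-a.e.\ zenith, the disintegration of $\W\mu$ via the uniform measure on $\S^2$ propagates this to $\W\mu=\W\nu$, and injectivity of $\W$ on $\M(\S^2)$ (\autoref{thm:B_inj}) concludes $\mu=\nu$.

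A last remark on the setting: the distances $\VSW_p$ and $\SSW_p$ involve $\WS_p$ between the pushed-forward measures, so the statement is meaningful on probability measures (or, more generally, on positive measures of fixed total mass within $\M_{\sym}(\S^2)$ and $\M(\S^2)$); the push-forwards preserve total mass, so the one-dimensional Wasserstein distances on the respective slices are well defined, and the argument above then goes through verbatim.
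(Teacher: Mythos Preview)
Your proposal is correct and follows essentially the same approach as the paper: symmetry and the triangle inequality come from the corresponding properties of $\WS_p$ together with Minkowski's inequality in $L^p$, and positive definiteness is reduced to the injectivity of $\V$ on $\M_{\sym}(\S^2)$ (\autoref{thm:V_inj}) and of $\W$ on $\M(\S^2)$ (\autoref{thm:B_inj}). You supply more detail than the paper does---in particular the measurability of the integrand and the use of the disintegration formulae (\autoref{prop:V-dis} and its semicircle analogue) to pass from slicewise equality almost everywhere to equality of the full transformed measures---and your final remark about the need for equal total mass is a valid observation on the scope of the statement.
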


\begin{proof}
    The symmetry and the triangle inequality follow from
    the corresponding properties of the Wasserstein distance
    and the $p$-norm on $\T$ and $\S^2$.
    The positive definiteness follows from the injectivity of $\V$ and $\W$
    in \autoref{thm:V_inj} and \autoref{thm:B_inj}.
\end{proof}

Since the geodesic distance $d(\zb\xi,\zb\eta) = \arccos(\zb\xi^\top\zb\eta)$ on the sphere $\S^2$ is rotationally invariant,
i.e., $d(\zb Q \zb\xi, \zb Q \zb\eta) = d(\zb\xi,\zb\eta)$ for all $\zb Q \in\SO$,
the Wasserstein distance \eqref{eq:Wp} on $\S^2$ inherits this property,
i.e., 
$\WS_p(\mu,\nu) = \WS_p(\mu\circ \zb Q,\nu\circ \zb Q)$
 for all $\zb Q\in\SO$.
 The vertical sliced Wasserstein distance is only 
 partially rotation invariant.

\begin{proposition}
    For any $p\in[1,\infty)$, the vertical sliced Wasserstein distance $\VSW_p$ is
    invariant with respect to rotations \eqref{eq:R3R2} around the vertical axis,
    i.e.,
    for all $\mu, \nu \in \M(\S^2)$ and $\alpha \in \T$, it holds
    \begin{equation}
        \VSW_p(\mu, \nu) 
        =
        \VSW_p(\mu \circ \zb R_3(\alpha), \mu \circ \zb R_3(\alpha)).
    \end{equation}
\end{proposition}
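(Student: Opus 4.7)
The plan is to reduce the invariance of $\VSW_p$ to the invariance of the Lebesgue/Haar measure on $\T$ under translation. The key observation is that a rotation around the vertical axis simply reparametrizes the family of slicing directions.

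First I would establish the pointwise identity $\cS_\psi \circ \zb R_3(\alpha) = \cS_{\psi - \alpha}$ on $\S^2$. This is a short trigonometric calculation: expanding
\begin{equation}
 \cos\psi(\cos\alpha\,\xi_1 - \sin\alpha\,\xi_2) + \sin\psi(\sin\alpha\,\xi_1 + \cos\alpha\,\xi_2) = \cos(\psi-\alpha)\,\xi_1 + \sin(\psi-\alpha)\,\xi_2
\end{equation}
gives the claim by the addition formulas. Applying the functorial property of the push-forward and the measure-valued definition \eqref{eq:Vs_measure} of $\V_\psi$ then yields
\begin{equation}
 \V_\psi\bigl(\mu \circ \zb R_3(\alpha)\bigr) = (\cS_\psi \circ \zb R_3(\alpha))_\# \mu = (\cS_{\psi-\alpha})_\# \mu = \V_{\psi-\alpha} \mu,
\end{equation}
and likewise for $\nu$. (Interpreted either as pullback or push-forward, the identity is the same up to the sign of $\alpha$, which is harmless below.)

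Plugging into the definition \eqref{eq:VSW} of the vertical sliced Wasserstein distance, I obtain
\begin{equation}
 \VSW_p^p\bigl(\mu \circ \zb R_3(\alpha),\, \nu \circ \zb R_3(\alpha)\bigr) = \int_\T \WS_p^p\bigl(\V_{\psi-\alpha}\mu,\, \V_{\psi-\alpha}\nu\bigr) \d\psi.
\end{equation}
A substitution $\psi \mapsto \psi + \alpha$, which preserves the Lebesgue measure on $\T$, converts the right-hand side into $\VSW_p^p(\mu,\nu)$, proving the claim.

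There is no genuine obstacle: the only non-mechanical input is the identification $\cS_\psi \circ \zb R_3(\alpha) = \cS_{\psi-\alpha}$, which expresses the geometric fact that rotating $\S^2$ about the $\xi_3$-axis is the same as rotating the family of vertical slicing directions by the opposite angle. Care is needed only with the convention governing $\mu \circ \zb Q$ (pullback vs.\ push-forward), but either convention gives the same conclusion after translation of the $\psi$-variable.
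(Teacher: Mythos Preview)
Your proposal is correct and follows essentially the same route as the paper: the paper also reduces the claim to the identity $\cS_\psi \circ \zb R_3(\alpha)^\tT = \cS_{\psi+\alpha}$ (your $\cS_\psi \circ \zb R_3(\alpha) = \cS_{\psi-\alpha}$ with the opposite convention for $\mu \circ \zb R_3(\alpha)$), and then concludes via translation invariance of the integral over $\T$. Your remark about the sign of $\alpha$ depending on the pullback/push-forward convention is exactly the only delicate point, and you handle it correctly.
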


\begin{proof}
    We have
    \begin{align}
        \VSW_p^p(\mu \circ \zb R_3(\alpha), \mu \circ \zb R_3(\alpha))
        &=
        \int_{\T} 
        \WS_p^p( \V_{\psi}[\mu \circ \zb R_3(\alpha)] , \V_{\psi}[\nu\circ \zb R_3(\alpha)] ) 
        \d \psi
        \\
        &=
        \int_{\T} 
        \WS_p^p( [\cS_\psi \circ \zb R_3(\alpha)^\tT]_\#\mu, [\cS_\psi \circ \zb R_3(\alpha)^\tT]_\# \nu ) 
        \d \psi.
        \end{align}
Since
    $[\cS_\psi \circ \zb R_3(\alpha)^\tT](\bxi) 
    = 
    \langle \bxi, \zb R_3(\alpha) \, (\cos \psi, \sin \psi,0)^\tT \rangle
    =
    \cS_{\psi + \alpha}(\bxi)$,
we further obtain
\begin{equation*}
        \VSW_p^p(\mu \circ \zb R_3(\alpha), \mu \circ \zb R_3(\alpha))      
        =
        \int_{\T} 
        \WS_p^p( \V_{\psi + \alpha} \mu , \V_{\psi + \alpha} \nu ) 
        \d \psi
        =
        \VSW_p^p(\mu, \nu).\qedhere
    \end{equation*}
\end{proof}

In contrast to the vertical sliced Wasserstein distance,
the semicircular sliced Wasserstein distance is invariant to general rotations.

\begin{proposition}
    For any $p\in[1,\infty)$, the semicircular sliced Wasserstein distance $\SSW_p$ is
    rotationally invariant,
    i.e.,
    for every $\mu, \nu \in \M(\S^2)$ and $\zb Q \in \SO$, it holds
    \begin{equation}
        \SSW_p(\mu, \nu) 
        =
        \SSW_p(\mu \circ \zb Q, \mu \circ \zb Q).
    \end{equation}
\end{proposition}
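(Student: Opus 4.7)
The plan is to mirror the strategy used for $\VSW_p$: I first reduce $\W_{\alpha,\beta}[\mu \circ \bQ]$ to a translate of $\W_{\alpha',\beta'}\mu$ for angles $(\alpha',\beta')$ depending on $\bQ$ and $(\alpha,\beta)$, then absorb the translation via the translation invariance of the circle Wasserstein distance, and finally apply the rotational invariance of $\sigma_{\S^2}$ to the outer integral in \eqref{eq:SSW}.

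For the first step, the push-forward definition \eqref{eq:Wab-meas} gives $\W_{\alpha,\beta}[\mu \circ \bQ] = (\cA_{\alpha,\beta} \circ \bQ^\tT)_\# \mu$, exactly as in the vertical case. Using the Euler parameterization \eqref{eq:Q}, I decompose
\begin{equation}
\bQ \, \eul(\alpha,\beta,0) = \eul(\alpha',\beta',\gamma')
\end{equation}
for suitable $(\alpha',\beta',\gamma')\in \T\times[0,\pi]\times\T$. Since $\eul(\alpha',\beta',\gamma')^\tT = \bR_3(-\gamma')\bR_2(-\beta')\bR_3(-\alpha')$ and $\bR_3(-\gamma')$ simply shifts the azimuth by $-\gamma'$, it follows that
\begin{equation}
\cA_{\alpha,\beta}(\bQ^\tT \bxi) = \azi\bigl(\bR_3(-\gamma')\bR_2(-\beta')\bR_3(-\alpha')\bxi\bigr) = \cA_{\alpha',\beta'}(\bxi) - \gamma' \pmod{2\pi}.
\end{equation}
Hence $\W_{\alpha,\beta}[\mu \circ \bQ] = (\tau_{-\gamma'})_\# \W_{\alpha',\beta'}\mu$, where $\tau_\theta$ denotes the translation $x \mapsto x+\theta$ on $\T$.

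Since the circle metric, and therefore $\WS_p$ on $\P(\T)$, is translation invariant, the same shift $\tau_{-\gamma'}$ applied to both marginals cancels, yielding $\WS_p(\W_{\alpha,\beta}[\mu \circ \bQ], \W_{\alpha,\beta}[\nu \circ \bQ]) = \WS_p(\W_{\alpha',\beta'}\mu, \W_{\alpha',\beta'}\nu)$. To finish, I evaluate both sides of $\bQ \, \eul(\alpha,\beta,0) = \eul(\alpha',\beta',\gamma')$ at $\be^3$; since $\eul(\alpha,\beta,0)\be^3 = \sph(\alpha,\beta)$ and $\gamma'$ does not appear on the right (as $\bR_3(\gamma')\be^3=\be^3$), this gives $\bQ\sph(\alpha,\beta) = \sph(\alpha',\beta')$. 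The reparameterization $(\alpha,\beta) \mapsto (\alpha',\beta')$ on $\S^2$ is therefore exactly the action of $\bQ$, and invariance of $\sigma_{\S^2}$ under $\SO$ lets me substitute the $(\alpha',\beta')$-integral back to $\SSW_p^p(\mu,\nu)$.

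The main obstacle is the Euler-angle bookkeeping, namely verifying that $\bQ\,\eul(\alpha,\beta,0)$ really factors as $\eul(\alpha',\beta',\gamma')$ with $\sph(\alpha',\beta') = \bQ\sph(\alpha,\beta)$ and that the residual $\bR_3(\gamma')$ produces precisely a clean azimuthal shift on $\T$ (rather than interacting nontrivially with the subsequent $\bR_2$ and $\bR_3$ factors). The gimbal-lock set $\{\beta'\in\{0,\pi\}\}$ has $\sigma_{\S^2}$-measure zero, so it does not interfere with the integral. Once this decomposition is in hand, the remaining three steps---push-forward manipulation, translation invariance of $\WS_p$ on $\T$, and rotational invariance of $\sigma_{\S^2}$---are essentially routine.
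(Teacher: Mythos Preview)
Your proof is correct and rests on the same two ingredients as the paper's---the azimuth-shift identity $\cA_{\alpha,\beta}\circ\bQ^\tT = \cA_{\alpha',\beta'} - \gamma'$ (which is exactly the paper's key relation $\cT_\gamma\circ\cA_{\alpha,\beta} = \azi(\eul(\alpha,\beta,\gamma)^\tT\,\cdot\,)$) and the translation invariance of $\WS_p$ on $\T$. The execution differs slightly: you work directly on $\S^2$, fixing the Euler decomposition $\bQ\,\eul(\alpha,\beta,0) = \eul(\alpha',\beta',\gamma')$ and then invoking the rotation invariance of $\sigma_{\S^2}$ under the reparameterization $\sph(\alpha,\beta)\mapsto\bQ\,\sph(\alpha,\beta)$. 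The paper instead lifts the outer integral to $\SO$ by inserting an extra $\frac{1}{2\pi}\int_\T \d\gamma$, uses the rotation invariance of $\sigma_{\SO}$ to absorb $\bQ$, and then collapses back. Your route is more concrete and avoids the auxiliary lift; the paper's route sidesteps having to track the specific Euler angles $(\alpha',\beta',\gamma')$ and the associated measure-zero degeneracy at $\beta'\in\{0,\pi\}$. Both arguments are short and essentially equivalent.
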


\begin{proof}
    For $\gamma \in \T$,
    let $\cT_\gamma \colon \T \to \T$
    be the shift operator given by 
    $\cT_\gamma(\psi) \coloneqq \psi - \gamma$.
    The key observation to show the statement is the identity
    \begin{equation}
        \label{eq:shift-azi}
        \cT_\gamma \circ \cA_{\alpha,\beta}(\bxi)
        =
        \azi(\eul(\alpha,\beta,0)^\tT \, \bxi) - \gamma
        =
        \azi(\eul(\alpha,\beta,\gamma)^\tT \, \bxi).
    \end{equation}
    Exploiting the shift invariance of the Wasserstein distance on $\T$,
    the identity \eqref{eq:shift-azi},
    and the rotation invariance of the surface measure on $\SO$,
    we have
    \begin{align}
        &\SSW_p^p(\mu \circ \zb Q, \nu \circ \zb Q)
        =
        \int_{\S^2}
        \WS_p^p( \W_{\alpha,\beta}[\mu \circ \zb Q] , 
        \W_{\alpha,\beta}[\nu \circ \zb Q] )
        \d \sigma_{\S^2}(\sph(\alpha,\beta))
        \\
        &\quad=
        \frac{1}{2\pi} 
        \int_\T \int_{\S^2}
        \WS_p^p( [\cT_\gamma \circ \cA_{\alpha,\beta}]_\# (\mu \circ \zb Q) , 
        [\cT_\gamma \circ \cA_{\alpha,\beta}]_\#(\nu \circ \zb Q) )
        \d \sigma_{\S^2}(\sph(\alpha,\beta)) \d \gamma
        \\
        &\quad=
        \frac{1}{2\pi} 
        \int_{\SO}
        \WS_p^p( [\azi(\eul(\alpha,\beta,\gamma)^\tT \zb Q^\tT ]_\# \mu, 
        [\azi(\eul(\alpha,\beta,\gamma)^\tT \zb Q^\tT \cdot)]_\# \nu )
        \d \sigma_{\SO}(\eul(\alpha,\beta,\gamma))
        \\
        &\quad=
        \frac{1}{2\pi} 
        \int_{\SO}
        \WS_p^p( [\azi(\eul(\tilde\alpha,\tilde\beta,\tilde\gamma)^\tT \cdot)]_\# \mu, 
        [\azi(\eul(\tilde\alpha,\tilde\beta,\tilde\gamma)^\tT \cdot)]_\# \nu )
        \d \sigma_{\SO}(\eul(\tilde\alpha,\tilde\beta,\tilde\gamma))
        \\
        &\quad=
        \frac{1}{2\pi} 
        \int_\T \int_{\S^2}
        \WS_p^p( [\cT_{\tilde\gamma} \circ \cA_{\tilde\alpha,\tilde\beta}]_\# \mu, 
        [\cT_{\tilde\gamma} \circ \cA_{\tilde\alpha,\tilde\beta}]_\#\nu)
        \d \sigma_{\S^2}(\sph(\tilde\alpha,\tilde\beta)) \d \tilde\gamma
        =
        \SSW_p^p(\mu,\nu). \qedhere
    \end{align}
\end{proof}

%------------------------------------------------------------------
\section{Discrete Spherical Transforms and Inversion}\label{sec:discrete}
%-----------------------------------------------------------------

%-----------------------------
\subsection{Discretization and Inversion via Moore--Penrose Pseudoinverse}
\label{sec:disc-and-inv}
%-----------------------------
We will compute the sliced spherical transforms of (probability density) functions
numerically based on the singular value decomposition in a similar way as in \cite{HiPoQu18}.
To this end, we need an appropriate discretization.
In particular we need quadrature formulas on $\S^2$ as well as on the image domains $\T \times \II$ 
and $\SO$ of $\V$ and $\W$, respectively.

Let $N\in\N$.
We choose quadrature nodes $\bxi^m\in\S^2$ and respective weights $w_m>0$, $m \in [M] \coloneqq \{1,\dots,M\}$
such that all  spherical harmonics of degree $\le 2N$ are exactly integrated by the corresponding quadrature rule, see \cite{GrPo10,HiQu15}.
To be more precise,
we use the equispaced nodes $\varphi_i = i \pi/(N+1)$, $i\in[2N+2]$,
and the Gauss--Legendre nodes $t_j \in [-1,1]$, $j\in[N+1]$,
given by the roots of the $(N+1)$st Legendre polynomial.
We denote the corresponding Gauss--Legendre weights by $r_j$.
Now we obtain the quadrature
\begin{equation}
    \bxi^{m(i,j)} \coloneqq \sph(\varphi_i, \arccos t_j)
    \quad\text{and}\quad
    w_{m(i,j)} \coloneqq 2\pi r_j / (2N+2)
,\qquad 
i\in[2N+2],\;
j\in[N+1],
\end{equation}
where $m(i,j)\in [M]$ denotes the index related to the pair $(i, j)$ and $M=2(N+1)^2$.
Setting 
\begin{equation}
    \label{eq:f-samples}
    \zb f
    \coloneqq (f_m)_{m=1}^M
    \coloneqq (f(\bxi^m))_{m=1}^M, 
    \quad
    \zb Y_n^k \coloneqq (Y_n^k(\bxi^m))_{m=1}^M,
    \quad\text{and}\quad 
    \zb w  \coloneqq (w_m)_{m=1}^M,
\end{equation}
we approximate the spherical harmonics coefficients 
$\inn{f, Y_n^k}$ by
$$
\inn{\zb f, \zb Y_n^k}_{\bw}
\coloneqq
\sum_{m=1}^{M} f(\bxi^m)\, \overline{Y_n^k(\bxi^m)}\, w_m,
\qquad  n=0,\dots,N,\ k=-n,\dots,n.
$$
In particular, we have that $\inn{f, Y_n^k} = \inn{\zb f, \zb Y_n^k}_{\bw}$ 
if $f$ is a spherical polynomial of degree $\le N$.
All discrete Fourier coefficients
can be computed efficiently in $\bigo(N^2 \log^2 N)$ arithmetic operations
utilizing the \emph{nonuniform fast spherical Fourier transform} (NFSFT) \cite{kupo02,PlPoStTa18}. 

%---------------------------------------------------------------------------------
\paragraph{Discrete Vertical Slice Transform}
For discretizing $\T\times\II$, we use again
equispaced nodes $\psi_i = i \pi / (N+1)$, $i\in[2N+2]$
and 
Gauss--Legendre nodes $t_j$ and weights $r_j$, $j\in[N+1]$.
We denote the respective quadrature weights on $\T\times\II$ by $\tilde{\zb w}$, where $\tilde w_{\ell(i,j)} = \pi r_j / (N+1)$ and $\ell(i, j) \in [L]$ with $L=2(N+1)^2$ denotes the index related to the pair $(i, j)$.
The quadrature is exact of degree $2N$,
i.e., for all linear combinations of basis functions~$B_n^k$ with $0\le n,\abs{k} \le 2N$.
Using the singular value decomposition \eqref{eq:V_svd},
we discretize $\V$ by
\begin{equation} \label{eq:Vf-d}
\V_D\colon \R^{M} \to \R^{L},\quad
\V_D\zb f 
\coloneqq
\sum_{n=0}^{N} 
\sum_{\substack{k=-n\\n+k \,  \text{even}} }^n 
\mathrm{v}_n^k\, \inn{\zb f, \zb Y_n^k}_{\bw} \,\zb B_n^k,
\end{equation}
where 
$\zb B_n^k \coloneqq (B_n^k(\psi_i,t_j))_{\ell(i,j)=1}^L$ 
and $B_n^k$ is given in \eqref{eq:Bnk}.
Then  $\V_D\zb f$ can be computed
using the fast Fourier transform (FFT) in $\psi$ 
and a fast polynomial transform \cite{postta98} in $t$ in $\bigo(N^2 \log^3 N)$ arithmetic operations.
Based on the quadrature for $\T \times \II$,
we analogously discretize the (truncated) Moore--Penrose pseudoinverse 
\eqref{MP_V} by
\begin{equation}\label{MP_V_approx}
  \V^\dagger_D \colon \R^L \to \R^M, \quad
  \V^\dagger_D \zb g 
  \coloneqq
	\sum_{n =0} ^N
  \sum_{\substack{k=-n\\n+k \,  \text{even}} }^n
  \frac{1}{{\mathrm v}_n^k} \,
  \Bigl\langle\zb g, \,\zb B_n^k \Bigr\rangle_{\tilde {\zb w}} \,
  \zb Y^k_n,
\end{equation}
where $\zb g \coloneqq (g(\psi_i,t_j))_{\ell(i,j)=1}^{L}$ 
consists of samples of $g\colon \T\times\II \to\R$.
For a spherical polynomial $f$ of degree $N$, 
the chosen quadratures ensure $\V^\dagger_D\V_D \zb f = \zb f$.

%------------------------------------------------------------------------------
\paragraph{Discrete Semicircle Transform}
We use quadrature nodes $\bQ_\ell\in\SO$ and weights $\tilde{\zb w} = (\tilde w_{\ell})_{\ell=1}^L$
such that all 
rotational harmonics of degree $\le 2N$ are exactly integrated.
Since it becomes clear from the context which weights are addressed, we use again $\tilde{\zb w}$.
In particular, we consider a quadrature \cite{GrPo09} on $\SO \cong \S^2\times\S^1$ as product of a Gauss-type quadrature on $\S^2$, see \cite{GrPo10}, and an equispaced quadrature on $\T$.
We use this product structure because we can now discretize $\W_{\alpha,\beta}$ on a uniform grid.
Similarly to \eqref{eq:Vf-d}, the singular value decomposition \eqref{eq:9} of $\W\colon L^2(\S^2) \to L^2(\SO)$ can be truncated as
\begin{align} \label{eq:Bf-d}
\mathcal W_D\colon \R^M \to \R^L, \quad
\mathcal W_D\zb f 
&\coloneqq
\sum_{n=0}^N \sum_{j,k=-n}^{n}
\lambda_n^j\, \inn{\zb f, \zb Y_n^k}_{\bw}\, \overline{\zb D_n^{k,j}}, 
\end{align}
where $\zb D_n^{k,j} \coloneqq (D_n^{k,j}(\bQ_\ell))_{\ell=1}^L$.
Then \eqref{eq:Bf-d} can be computed in $\bigo(N^3 \log^2 N + L)$ arithmetic operations
with the \emph{nonuniform fast SO(3) Fourier transform} (NFSOFT) \cite{poprvo07}.
Further, 
we approximate the Moore--Penrose pseudoinverse \eqref{eq:Bdag} of $\mathcal W$ 
 by
\begin{equation} \label{eq:Bdag-d}
    \mathcal W_D^\dagger \colon \R^L\to\R^M, \quad
    \mathcal W_D^\dagger \zb g
    \coloneqq
    \sum_{n=0}^N \sum_{j,k=-n}^{n}
    \frac{1}{(\mathrm{w}_n)^2}\, \lambda_n^j \inn{\zb g, \overline{\zb D_n^{k,j}}}_{\tilde{\zb w}}\,\zb Y_n^k,
\end{equation}
where $\zb g \coloneqq (g(\bQ_\ell))_{\ell=1}^L $ for $g\colon \SO\to\R$.
As above, $\W_D^\dag \zb g$ can be evaluated with NFSFT and NFSOFT algorithms.
For a spherical polynomial $f$ of degree $N$,
we have $\W^\dagger_D \W_D \zb f = \zb f$.

%---------------------------------------------
\subsection{Inversion by Variational Approach} \label{sec:reg}
%---------------------------------------------
The push-forward definitions of
the sliced spherical transforms 
ensure that
probability measures are mapped to probability measures.
In the context of optimal transport,
we require that the inverse transforms have the same behaviour.
Even when restricting to the function setting,
we can however construct functions
with non-trivial negative part
that are transformed into probability densities,
e.g.\ by taking a function that is negative in a sufficiently small spherical cap $\{\bxi\in\S^2 : \xi_3<1-\varepsilon\}$ 
and equals a positive constant otherwise.
Thus the Moore--Penrose pseudoinverse applied to a probability density 
is not necessarily a probability density.
To overcome this issue, 
we consider the inversion of the discretized spherical transforms as inverse problems, which we solve using a variational formulation.

As in \eqref{eq:f-samples},
let $\zb f \in \R^M$ contain the samples of 
the probability density function $f$ on $\S^2$,
and $\zb w \in \R^M$
the respective quadrature weights.
If the quadrature is exact for $f$,
we have
$\zb f\ge 0$ and 
$\int_{\S^2} f \d \sigma_{\S^2} 
= \sum_{m=1}^M w_m f_m = 1$.
Thus $\zb f$ can be interpreted as
probability density function
with respect to the counting measure weighted by $\bw$.
For the numerical inversion,
we handle both transforms simultaneously, 
denoting the discretizations $\V_D$ and $\W_D$ 
by $\mathcal T_D \colon \R^M \to \R^L$.
Let $\zb g \in \R^{L}$ be the samples of the density function $g$ on $\T \times \II$ or $\SO$.
We equip $\R^M$ with the weighted Euclidean inner product $\langle \zb f, \tilde{\zb f} \rangle_{\zb w} 
\coloneqq \sum_{m=1}^M w_m f_m \tilde f_m$,
and, analogously, $\R^L$ with 
the inner product 
$\langle \zb g, \tilde{\zb g} \rangle_{\tilde{\zb w}} 
\coloneqq \sum_{\ell=1}^L \tilde w_\ell g_\ell \tilde g_\ell$,
where $\tilde{\zb w}$ contains the quadrature weights for $\T \times \II$ or $\SO$.
Furthermore, we denote the all-one vector by $\zb 1$.

Now, we aim to find
an approximate solution
$\zb f$ of the inversion problem
$\mathcal T_D \zb f = \zb g$ in the weighted probability simplex
$\Delta_\bw \coloneqq \{ \zb f \in \R^M : \zb f \ge 0, \allowbreak \langle \zb f, \zb 1 \rangle_\bw = 1\}$.
To this end,
we introduce a regularized inverse as
the minimizer of the strictly convex optimization problem
\begin{equation}
    \label{eq:reg-inv}
    \argmin_{\zb f \in \Delta_\bw} \KL_{\tilde{\bw}}(\mathcal T_D \zb f, \zb g) + \rho \KL_{\bw}(\zb f, \zb 1), \quad \rho > 0,
\end{equation}
where %$\Delta_\bw \coloneqq \{ \zb f \in \R^M : \zb f \ge 0, \, \langle \zb f, \zb 1 \rangle_\bw = 1\}$
%denotes the discrete probability simplex, and
$\KL_\bw$ is the discrete \emph{Kullback--Leibler (KL) divergence} on the weighted space $\mathbb R^M$ given by
\begin{equation}   \label{eq:KL-w}
  \KL_\bw( \zb f, \tilde{\zb f})
  \coloneqq
  \langle \zb f, \log \zb f - \log \tilde{\zb f} \rangle_\bw + \langle \tilde{\zb f} -\zb f, \zb 1 \rangle_{\zb w},
 \end{equation}
for $\zb f, \tilde{\zb f} \ge 0$ with $f_m = 0$ whenever $\tilde f_m = 0$,
and $\KL_\bw( \zb f, \tilde{\zb f}) \coloneqq +\infty$ otherwise.
Here the logarithm acts componentwise, 
and we set $0 \log 0 \coloneqq 0$.
Note that $\KL_{\bw}(\zb f, \zb 1)$ is the negative entropy of $\zb f$.
The KL divergence on $\R^L$ is defined analoguously.

To find the minimizer of \eqref{eq:reg-inv},
we employ the primal-dual splitting of Chambolle and Pock \cite{ChPo16a}.
To this end, we reformulate \eqref{eq:reg-inv} as
\begin{equation}     \label{eq:reg-inv_1}
    \argmin_{\zb f, \zb y_1,\zb y_2} \KL_{\tilde{\bw}}(\zb y_1, \zb g) + \rho \KL_{\bw}(\zb y_2, \zb 1) + \chi_{\Delta_\bw} (\zb f)
		\quad\text{s.t.}\quad \mathcal T_D \zb f = \zb y_1,\, \zb f = \zb y_2
\end{equation}
with the \emph{characteristic function}  $\chi_{\Delta_\bw}(\zb f) = 0$ for $\zb f \in \Delta_\bw$ and $\chi_{\Delta_\bw}(\zb f) = +\infty$ else.
For $\theta \in (0,1]$ and $\sigma, \tau >0$ such that $1/\tau \sigma > \lVert I + \mathcal T_D^* \mathcal T_D \rVert$,
the algorithm converges and reads as  
\mathtoolsset{showonlyrefs=false}%
\begin{subequations} \label{eq:PD}
  \begin{align}
    \label{eq:PD:primal}
    \zb f^{k+1}
    &\coloneqq
      \proj_{\Delta_\bw} \bigl(\zb f^k - \tau \mathcal T_D^*\zb y^k_1 - \tau \zb y^k_2 \bigr),\\
    \tilde{\zb f}^{k+1}
    &\coloneqq
      \zb f^{k+1} + \theta (\zb f^{k+1} - \zb f^k),
    \\
    \zb y_1^{k+1} 
		&\coloneqq 
		\prox_{\sigma \KL^*_{\tilde{\zb w}}(\cdot,\zb g) } \bigl(\zb y_1^k + \sigma \mathcal T_D \tilde{\zb f}^{k+1}\bigr)
		\\
    \zb y_2^{k+1}
    &\coloneqq
		\prox_{\sigma (\rho \KL_{\zb w})^*(\cdot,\zb 1) } \bigl(\zb y_2^k + \sigma \, \tilde{\zb f}^{k+1}\bigr).
  \end{align}
  \end{subequations}
  \mathtoolsset{showonlyrefs=true}%
  Here $\proj_{\Delta_\bw}$ is the orthogonal projection onto $\Delta_\bw$.
  Further,
    for a function $h \colon \R^M \to \R$,
    the \emph{proximal operator} with respect to the weight $\bw$ is given by
    \begin{equation}
        \label{eq:prox}
      \prox_{\sigma h}({\zb x})
      \coloneqq
      \argmin_{{\zb y} \in \R^M}
      h({\zb y}) + \tfrac{1}{2\sigma} \, \lVert \zb x - {\zb y} \rVert^2_{\zb w}
    \end{equation}
    and its \emph{Fenchel conjugate} by
    $h^* (\zb y) 
	    \coloneqq 
	    \max_{\zb x \in \R^M}
        \langle \zb x, \zb y\rangle_{{\zb w}} + h(\zb y)$.
    On $\R^L$ with weight $\tilde \bw$, 
    the proximal operator and conjugate are defined similarly.

\begin{proposition}
  \label{prop:proj-ps}
  The orthogonal projection onto $\Delta_\bw$ with respect to the inner product $\inn{\cdot,\cdot}_\bw$ is given by
  \begin{equation}
    \proj_{\Delta_\bw}(\zb f)
    = 
    [\zb f + \lambda \zb 1]_+,
  \end{equation}
  where $[\cdot]_+$ denotes the componentwise positive part, 
  and $\lambda$ is the root of
  $\langle
    \zb 1, [\zb f + \lambda \zb 1]_+
    \rangle_{\zb w} - 1$.
\end{proposition}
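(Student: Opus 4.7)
The plan is to derive the projection formula from the Karush--Kuhn--Tucker (KKT) conditions of the underlying quadratic program. By definition,
\begin{equation}
    \proj_{\Delta_\bw}(\zb f)
    =
    \argmin_{\zb y \in \R^M}
    \tfrac{1}{2} \lVert \zb f - \zb y \rVert_\bw^2
    \quad\text{s.t.}\quad
    \zb y \ge 0,\ \langle \zb y, \zb 1 \rangle_\bw = 1.
\end{equation}
The objective is strictly convex, the feasible set is closed, convex, and nonempty (e.g.\ $\zb y = \tfrac{1}{\langle \zb 1,\zb 1\rangle_\bw}\zb 1$), so the minimizer exists uniquely and Slater's condition holds. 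Thus the KKT conditions are necessary and sufficient.

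Next, I would introduce a multiplier $\lambda \in \R$ for the equality constraint and $\zb \mu \in \R^M_{\ge 0}$ for the positivity constraint and form the Lagrangian
\begin{equation}
    L(\zb y,\lambda,\zb\mu)
    =
    \tfrac12 \lVert \zb f - \zb y\rVert_\bw^2
    - \lambda\left(\langle \zb y,\zb 1\rangle_\bw - 1\right)
    - \langle \zb\mu, \zb y \rangle.
\end{equation}
Stationarity in $y_m$ yields $-w_m(f_m - y_m) - \lambda w_m - \mu_m = 0$, i.e.\ $y_m = f_m + \lambda + \mu_m/w_m$, together with the complementarity $\mu_m y_m = 0$ and $\mu_m, y_m \ge 0$. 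A short case distinction gives the closed form: if $f_m + \lambda > 0$, then necessarily $\mu_m = 0$ and $y_m = f_m + \lambda$; if $f_m + \lambda \le 0$, then $y_m = 0$ with $\mu_m = -w_m(f_m+\lambda) \ge 0$. In both cases $y_m = [f_m+\lambda]_+$, so
\begin{equation}
    \proj_{\Delta_\bw}(\zb f) = [\zb f + \lambda\zb 1]_+.
\end{equation}

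It remains to show that a (unique) $\lambda$ rendering the equality constraint feasible exists. I would argue that the function
\begin{equation}
    \Phi(\lambda) \coloneqq \langle \zb 1, [\zb f + \lambda \zb 1]_+\rangle_\bw - 1
\end{equation}
is continuous and nondecreasing in $\lambda$, with $\Phi(\lambda) = -1$ for $\lambda \le -\max_m f_m$ and $\Phi(\lambda) \to \infty$ as $\lambda \to \infty$. By the intermediate value theorem, $\Phi$ has a root, which is the desired multiplier. The main (minor) obstacle is to handle the nonsmoothness of the positive-part map carefully in the KKT derivation; this is resolved by the explicit case distinction above, and uniqueness of the projection guarantees that any admissible $\lambda$ produces the same vector $[\zb f+\lambda\zb 1]_+$.
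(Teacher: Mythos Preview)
Your proof is correct and follows essentially the same approach the paper has in mind: the paper does not spell out the argument but refers to \cite[Thm~6.27]{Be17}, which is precisely the KKT derivation for the projection onto the standard simplex, and your proposal is the natural adaptation to the weighted inner product $\langle\cdot,\cdot\rangle_{\bw}$. Your treatment of existence of $\lambda$ via the intermediate value theorem is also in line with the paper's subsequent remark that $\Phi$ is monotone and piecewise linear.
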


The statement follows line by line
via incorporating the weighted inner product into the argumentation in \cite[Thm~6.27]{Be17}.
The function in \autoref{prop:proj-ps} 
is monotonically increasing 
and piecewise linear
with finitely many pieces; 
thus
the root can be determined using a bisection method
to identify the piece with sign change
and solving a linear equation.
For the standard probability simplex with $\bw=\zb1$,
there exist several further numerically efficient approaches
\cite{Con16}.

\begin{proposition}
Let $\sigma, a > 0$, and $\zb b \in \R^M$.
On the weighted Euclidean space $\R^M$,
the KL divergence satisfies
$$
\prox_{\sigma (a \KL)^*(\cdot,\zb b)}({\zb x}) 
= \zb x -  a W \bigl( \tfrac{\sigma}{a} \, \zb b \odot
\exp \bigl(\tfrac{1}{a} \, \zb x \bigr) \bigr),
\qquad \bx\in\R^M,
$$
where $W$ denotes the componentwise applied \emph{Lambert's $W$-function} that maps $z$ to the solution $y$ of $y  \exp y = z$
and $\odot$ the componentwise multiplication.
\end{proposition}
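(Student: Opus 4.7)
The plan is to apply Moreau's identity to the weighted Euclidean space $(\R^M,\langle\cdot,\cdot\rangle_{\bw})$, reducing the problem to computing the proximal operator of $a\KL(\cdot,\zb b)$, which then separates coordinatewise and is solved via the defining equation of Lambert's $W$-function.

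First, I would invoke Moreau's decomposition $\zb x = \prox_{\sigma h^*}(\zb x) + \sigma\,\prox_{h/\sigma}(\zb x/\sigma)$ with $h \coloneqq a\,\KL_{\bw}(\cdot,\zb b)$, so that it suffices to evaluate $\zb y \coloneqq \prox_{h/\sigma}(\zb x/\sigma)$. Because the weighted norm $\tfrac12\|\zb z-\zb y\|_{\bw}^2$ and the KL divergence \eqref{eq:KL-w} are both sums of terms indexed by $m$ with the common factor $w_m$ (which then drops out of the first-order condition), the minimization decouples into the $M$ scalar problems
\begin{equation}
\min_{y_m\ge 0}\;\frac{a}{\sigma}\bigl(y_m\log(y_m/b_m)+b_m-y_m\bigr)+\tfrac12\bigl(x_m/\sigma-y_m\bigr)^2.
\end{equation}

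Next, I would write the first-order optimality condition for each coordinate, which, after simplification, reads $\tfrac{a}{\sigma}\log(y_m/b_m)=x_m/\sigma-y_m$, equivalently $y_m = b_m\exp(x_m/a)\exp(-\sigma y_m/a)$. Setting $u_m \coloneqq \sigma y_m/a$ turns this into $u_m\,\exp(u_m) = \tfrac{\sigma b_m}{a}\exp(x_m/a)$, which by the defining property of Lambert's $W$ yields
\begin{equation}
y_m = \tfrac{a}{\sigma}\, W\!\left(\tfrac{\sigma b_m}{a}\exp(x_m/a)\right).
\end{equation}
Finally, substituting back into Moreau's identity gives the claimed formula
\begin{equation}
\prox_{\sigma(a\KL)^*(\cdot,\zb b)}(\zb x) = \zb x - a\,W\!\left(\tfrac{\sigma}{a}\,\zb b\odot\exp(\zb x/a)\right).
\end{equation}

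The only delicate points are (i) verifying that the Moreau identity is valid in the weighted inner product (it is, since the Fenchel conjugate in the proposition is defined with respect to the very same $\langle\cdot,\cdot\rangle_{\bw}$, so the classical identity transfers verbatim), and (ii) handling the boundary case $b_m=0$, where the convention $0\log 0=0$ forces $y_m=0$ and the formula remains consistent via $W(0)=0$. Beyond these bookkeeping matters, the derivation is a routine application of convex duality.
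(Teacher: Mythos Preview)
Your proof is correct and takes a genuinely different route from the paper's. The paper first computes the Fenchel conjugate explicitly, obtaining $(a\KL_{\bw})^*(\zb y,\zb b) = a\langle \zb b,\exp(\zb y/a)\rangle_{\bw}$ up to an additive constant, and then minimizes $h^*(\zb y)+\tfrac{1}{2\sigma}\|\zb x-\zb y\|_{\bw}^2$ directly; setting the gradient to zero gives $\zb b\odot\exp(\zb y/a)=\tfrac{1}{\sigma}(\zb x-\zb y)$, which after the substitution $\zb u=\tfrac{1}{a}(\zb x-\zb y)$ becomes the same Lambert-$W$ equation you reach. Your approach via Moreau's decomposition bypasses the explicit conjugate entirely and reduces instead to the prox of $a\KL$ itself, which is arguably shorter; the paper's route has the incidental benefit of recording the closed form of $(a\KL)^*$. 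Both arguments are equally elementary once the scalar equation $u\,e^u=c$ appears, and your observation that Moreau's identity carries over verbatim to the weighted inner product is exactly what is needed, since both the prox \eqref{eq:prox} and the conjugate in the paper are defined relative to $\langle\cdot,\cdot\rangle_{\bw}$.
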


\begin{proof}
Differentiating the objective of the weighted Fenchel conjugate
and setting to zero yields
\begin{equation}
    (a \KL_{{\zb w}})^* (\zb y, \zb b) 
    = 
    a \, \big\langle \zb b, \exp\big(\tfrac{1}{a} \, \zb y\big) \big\rangle_{{\zb w}} + a \, \langle \zb b, \zb 1 \rangle_{{\zb w}}.
\end{equation}
Inserting the conjugated scaled KL divergence into \eqref{eq:prox},
and setting the derivative again zero, 
we componentwise obtain
\begin{align*}
    \zb b \odot \exp\bigl(\tfrac{1}{a} \, \zb y \bigr)
    = 
    \tfrac{1}{\sigma} \, (\zb x - \zb y)
    &\quad\Leftrightarrow\quad
    \log(\zb b) + \tfrac{1}{a} \, \bx
    =
    \tfrac{1}{a} \, (\bx - \by) + \log \bigl(\tfrac{1}{\sigma} \, (\bx-\by) \bigr)
    \\
    &\quad\Leftrightarrow\quad
    \tfrac{\sigma}{a} \, \zb b \odot \exp\bigl(\tfrac{1}{a} \, \zb x\bigr)
    =
    \tfrac{1}{a} \, (\zb x- \zb y) \odot \exp\bigl( \tfrac{1}{a} \, (\zb x- \zb y) \bigr), 
\end{align*}
which gives the assertion.
\end{proof}

Note that the primal-dual algorithm requires the adjoint $\mathcal T_D^*$ in \eqref{eq:PD:primal}.
Based on the discretized spherical transforms in \autoref{sec:disc-and-inv},
we obtain their adjoint operators 
\begin{equation}
\V_D^* \bg
=
\sum_{n =0} ^N
\sum_{\substack{k=-n\\n+k \,  \text{even}} }^n
{\mathrm v}_n^k \,
\inn{ \zb g, \zb B_n^k }_{\tilde {\bw}} \,
\zb Y^k_n \quad\text{and}\quad
    \W_D^* \zb g
    =
    \sum_{n=0}^N \sum_{k,j=-n}^{n}
    \lambda_n^j 
    \inn{\zb g, \overline{\zb D_n^{k,j}}}_{\tilde\bw} \zb Y_n^k.
\end{equation}
The primal-dual iteration \eqref{eq:PD} may be summarized as follows. 

\begin{algorithm}[Primal-Dual for Regularized Inversion]
  \label{alg:class-PD}
  {\,}\newline
  \emph{Input:} $\bg \in \R^L$, $\theta \in (0,1]$, $\sigma, \tau, \rho >0$.
  \newline
  \emph{Initialization:}
  $\zb f^{0} \coloneqq (4\pi)^{-1}\, \zb1$, 
  $\by^0_1 \coloneqq \zb0$, $\by^0_2 \coloneqq \zb0$.
  \newline
  \emph{Iteration:}
  For $k=0,1,\dots$ until convergence do
  \begin{enumerate}[(a), nosep]
    \item $\zb f^{k+1}
    \coloneqq
      \proj_{\Delta_\bw} (\zb f^k - \tau \mathcal T_D^* \zb y^k_1 - \tau \zb y^k_2)$,
    \item
      $\tilde{\zb f}^{k+1}
    \coloneqq
      \zb f^{k+1} + \theta (\zb f^{k+1} - \zb f^k)$,
    \item
    $\tilde{\zb y}_1^{k+1}
    \coloneqq 
      \zb y_1^k + \sigma \mathcal T_D \tilde{\zb f}^{k+1}$,
    \item
      $\zb y_1^{k+1} 
    \coloneqq
      \tilde{\zb y}_1^{k+1} 
      - W\bigl(\sigma
      \zb g \odot
      \exp\bigl(\tilde{\zb y}_1^{k+1} \bigr)\bigr)$,
    \item
    $\tilde{\zb y}_2^{k+1}
    \coloneqq
      \zb y_1^k + \sigma \, \tilde{\zb f}^{k+1}$,
    \item
      $\zb y_2^{k+1}
    \coloneqq
      \tilde{\zb y}_2^{k+1} - \rho \, 
      W \bigl( \tfrac{\sigma}{\rho } 
      \exp\bigl( \tfrac{1}{\rho }  \tilde{\zb y}_2^{k+1} \bigr) \bigr)$.
  \end{enumerate}
  \emph{Output:} $\zb f \in \R^M$ solving \eqref{eq:reg-inv}.
\end{algorithm}

%---------------------------------------------
\section{Numerical Results} \label{sec:numerics}
%---------------------------------------------
In this section, we provide proof-of-concept examples 
that the sliced spherical transforms can be combined in a meaningful way
with optimal transport on the interval and the circle.
First, we deal with the approximation of Wasserstein barycenters on the sphere.
In particular, this requires the inversion of the sliced spherical transforms.
Second, we show that these transforms combined with optimal transport can be used
for classifying classes of measures.
All numerical tests are performed in Matlab R2022a on an Intel Core i7-10700 CPU with 16\,GB memory.

%---------------------------------------------------------------------
\subsection{Interpolation between Probability Measures}
\label{sec:int-meas}

Given two probability measures on the sphere,
we generate a measure \enquote{between} them,
as proposed in \cite{KolParRoh16} for the Radon transform on $\R^2$.
In particular, we compute the CDT or cCDT of their spherical transform $\V $ or $\W $,
then we interpolate in the CDT space and go back to $\S^2$ via the inverse of the CDT or cCDT and the spherical transforms.

For computing the forward, inverse, and adjoint spherical transforms, we truncate the singular value decomposition at degree $N=44$ and use the software package \cite{nfft3} for the NFSFT and NFSOFT.
We have $M = (2N+2)(N+1) = 4050$ quadrature nodes on the sphere, cf.\ \autoref{sec:disc-and-inv}.

\paragraph{Interpolation between Mises--Fisher Distributions}
As test function on $C(\S^2)$, we choose the density of the \emph{von Mises--Fisher (vMF) distribution}
\begin{equation} \label{eq:vmf}
f_{\kappa,\zb\eta}(\bxi) = c_\kappa\, \e^{\kappa \inn{\zb\eta,\bxi}}
,\quad \bxi\in\S^2,
\end{equation} 
with the mean direction $\zb\eta\in\S^2$ and the concentration $\kappa>0$,
where $c_\kappa$ is chosen such that $\int_{\S^2} f_{\kappa,\zb\eta} \d\sigma_{\S^2} = 1$.
Since $\V$ acts only on even functions,
we make our first tests with symmetrized vMF distributions via $(f_{\kappa,\zb\eta}(\xi_1,\xi_2,\xi_3) + f_{\kappa,\zb\eta}(\xi_1,\xi_2,-\xi_3))/2$, see \autoref{fig:vmf}.

\begin{figure}[htb]
    \centering
    \begin{minipage}{.32\textwidth}
    \includegraphics[width=\textwidth]{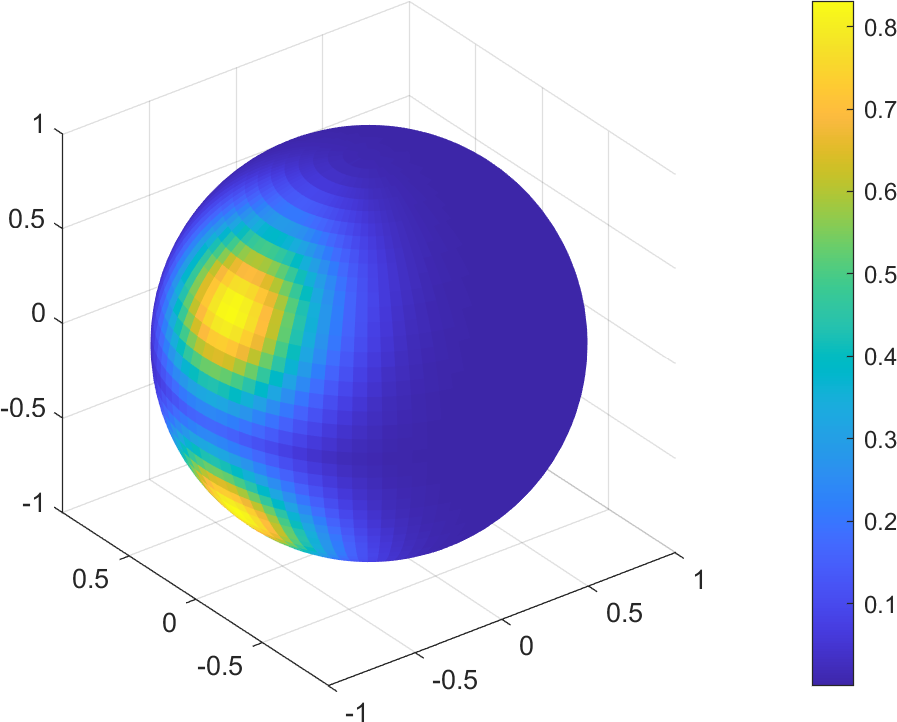}
    \end{minipage} \quad
    \begin{minipage}{.32\textwidth}
    \includegraphics[width=\textwidth]{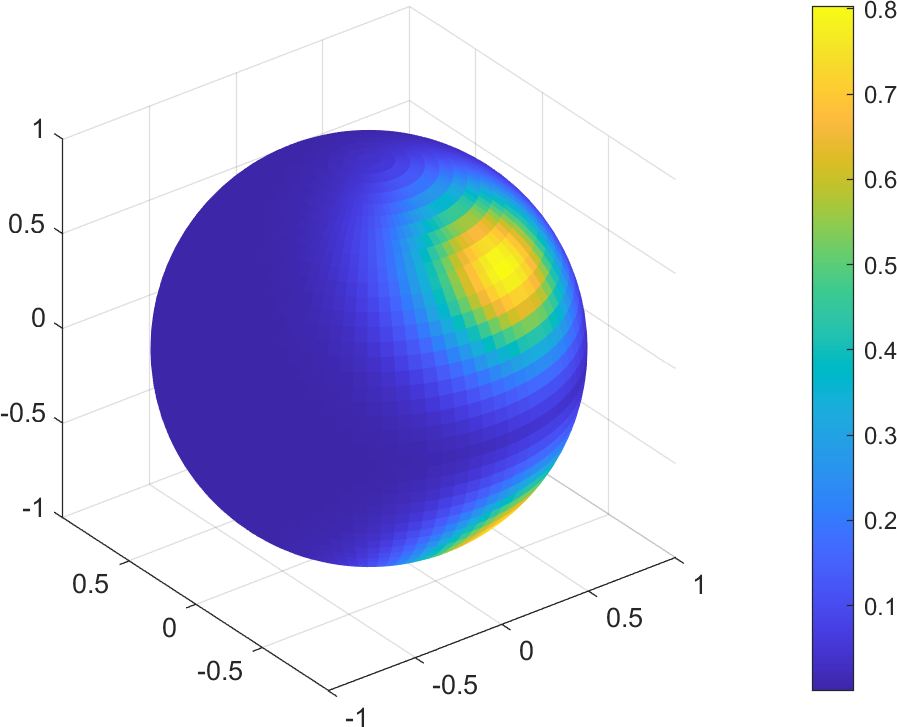}
    \end{minipage}
    
    \caption{Density functions of two symmetrized vMF distributions \eqref{eq:vmf}.}
    \label{fig:vmf}
\end{figure}

Let $\mu,\nu \in \Pac(\S^2)$ be two given measures.
For some $\delta \in[0,1]$, we set the \emph{unregularized $\V$-CDT interpolation} between $\mu$ and $\nu$ as
\begin{equation} \label{eq:V_bary}
\V^\dagger h, \quad \text{where} \quad 
h(\psi,t) = \CDT^{-1}_{\V_\psi \mu}\big[\delta\, \CDT_{\V_\psi \mu}[ \V_\psi \nu]\big](t),
\quad \psi\in\T,\ t\in\II.
\end{equation}
Here, we discretize $\mathcal V_\psi $ via \eqref{eq:Vf-d} with $2(N+1)^2=4050$ quadrature nodes on $\T\times\II$.
Our implementation of the CDT and its inverse is based on \cite{KolParRoh16}.\footnote{See the Python code \url{https://github.com/skolouri/Radon-Cumulative-Distribution-Transform}}

Analogously,
we define the \emph{unregularized $\W$-CDT interpolation} $\W^\dagger h$, where
$$
h(\eul(\alpha,\beta,\gamma))
= \CDT^{-1}_{\W_{\alpha,\beta} \mu}\big[ \delta\, \sCDT_{\W_{\alpha,\beta} \mu}[\W_{\alpha,\beta}\nu]\big](\gamma)
,\quad \eul(\alpha,\beta,\gamma)\in\SO.
$$
Here, the optimal parameter $\theta$ of \eqref{eq:Wp-circle}, which is required for the $\sCDT$,
is determined by the algorithm \cite{DelSalSob10}.\footnote{See the Matlab code \url{https://users.mccme.ru/ansobol/otarie/software.html}}
Moreover, we compute $\W\mu$ and $\W\nu$ by \eqref{eq:Bf-d},
where we use $L=118\,944$ quadrature points $\eul(\alpha,\beta,\gamma)$ on $\SO$, which are obtained as the product of a Gauss-type quadrature\footnote{Quadrature rule on $\S^2$ from \url{http://www.tu-chemnitz.de/~potts/workgroup/graef/quadrature}.} in $\sph(\alpha,\beta) \in \S^2$ 
and a uniform grid in~$\gamma$.

Instead of the Moore--Penrose pseudoinverse $\V^\dagger$ or $\W^\dagger$,
we also apply the primal-dual \autoref{alg:class-PD} to obtain the regularized inverse \eqref{eq:reg-inv} of $h$, which we call the regularized $\V$-CDT  or $\W$-CDT interpolation. Here we choose the regularization parameter $\rho=0.1$ and step sizes $\sigma=1$ and $\tau=1/4$,
and we terminate the algorithm after 200 iterations.
The CDT interpolations for $\delta=0.5$ are plotted in \autoref{fig:vmf_bary}.
While the regularization has a comparably small effect on the $\V$-CDT interpolation,
we note that the unregularized $\W$-CDT interpolation is severely negative in some areas and therefore not a probability density,
which is circumvented by the primal-dual algorithm.

\begin{figure}[htb]
    \centering
    \begin{subfigure}[t]{.32\textwidth}
    \includegraphics[width=\textwidth]{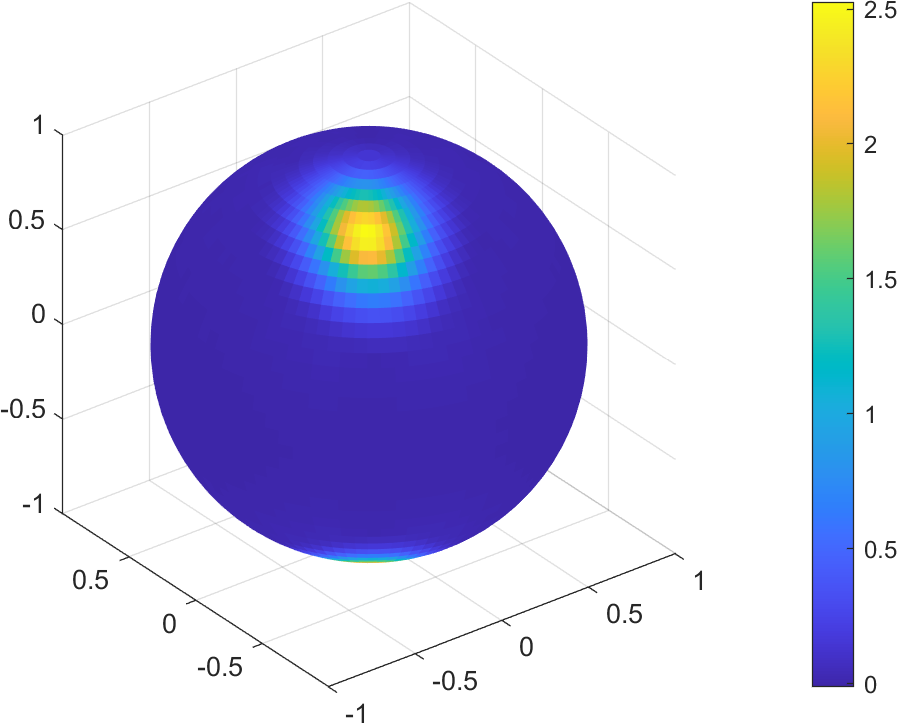}
    \caption{$\V$-CDT interpolation (0.01\,s)}
    \end{subfigure}\hfill
    \begin{subfigure}[t]{.32\textwidth}
    \includegraphics[width=\textwidth]{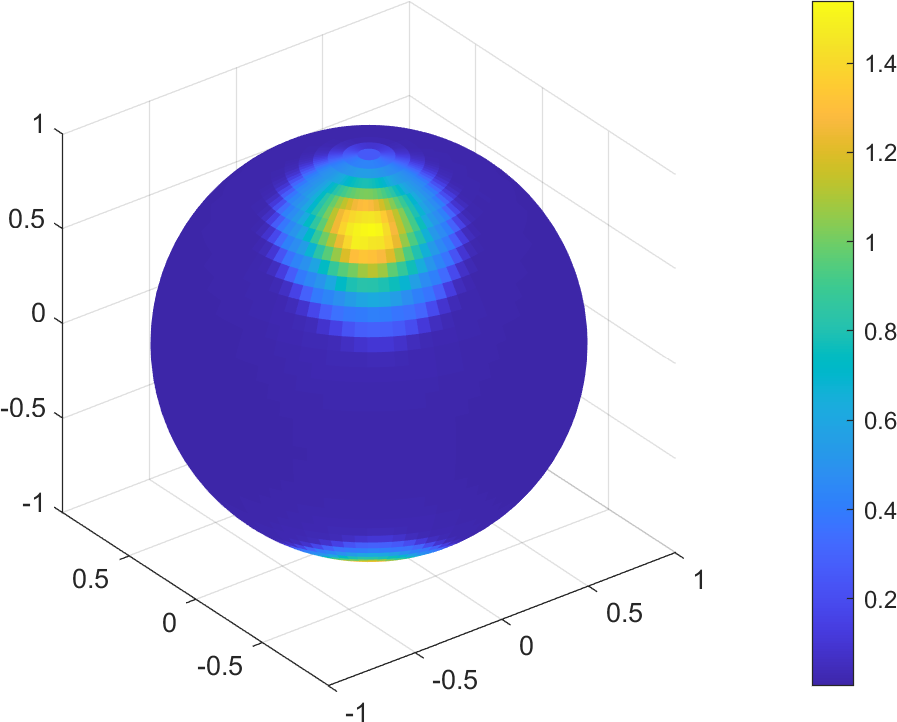}
    \caption{$\V$-CDT interpolation with regularized inverse (0.3\,s) \label{fig:vmf_vreg}}
    \end{subfigure}\hfill
    \begin{subfigure}[t]{.32\textwidth}
    \includegraphics[width=\textwidth]{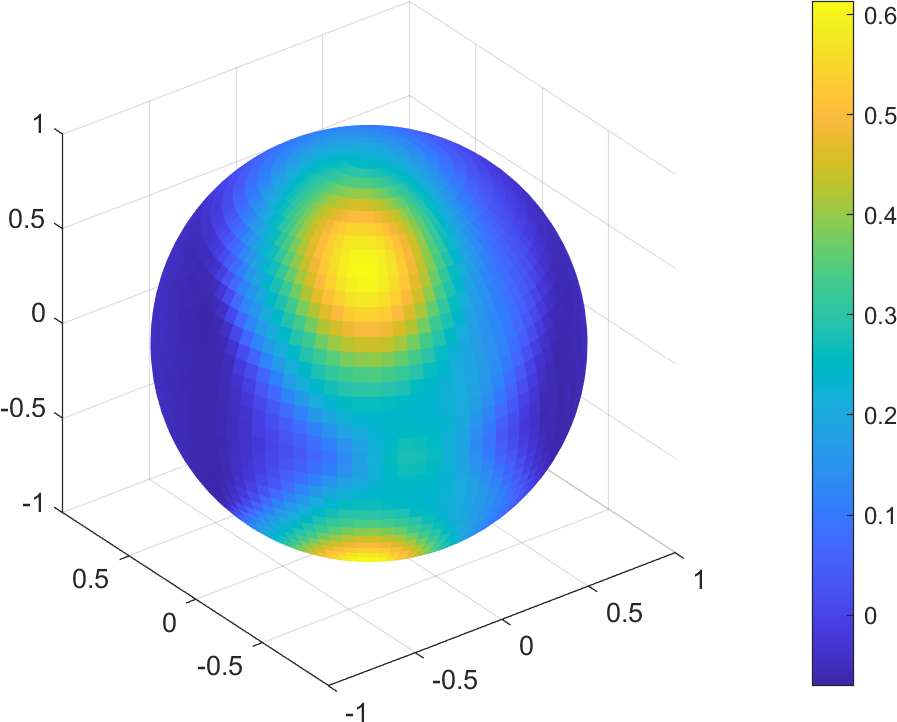}
    \caption{$\W$-CDT interpolation with Moore--Penrose pseudoinverse $\W^\dagger$ (3.2\,s)}
    \end{subfigure}
    \begin{subfigure}[t]{.32\textwidth}
    \includegraphics[width=\textwidth]{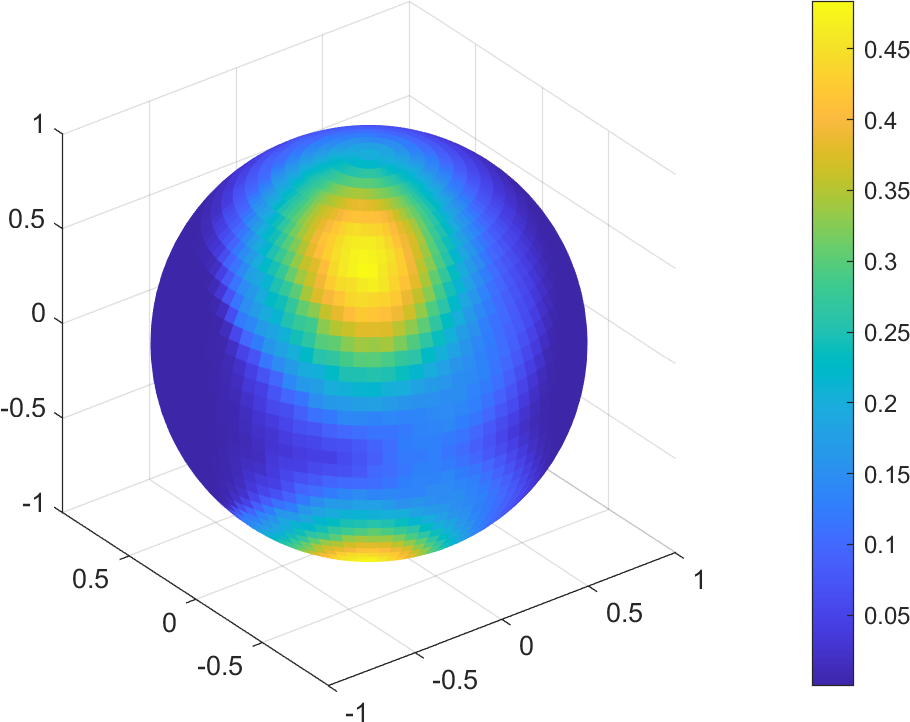}
    \caption{$\W$-CDT interpolation with regularized inverse (128\,s)}
    \end{subfigure} \hfill
    \begin{subfigure}[t]{.32\textwidth}
    \includegraphics[width=\textwidth]{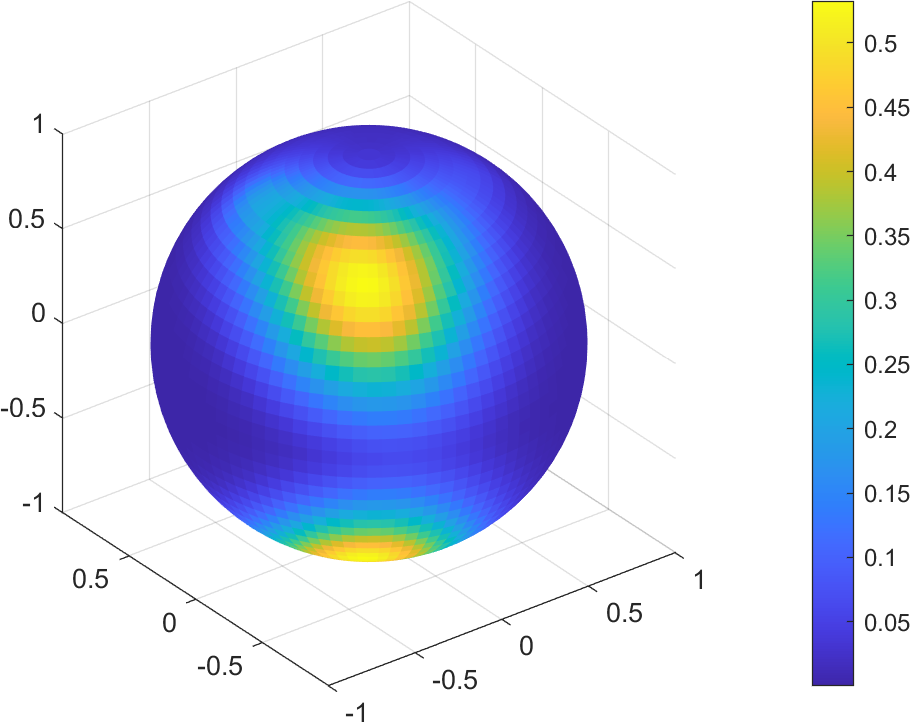}
    \caption{Regularized 2-Wasserstein bary\-center (19\,s)}
    \end{subfigure} \hfill
    \begin{subfigure}[t]{.32\textwidth}
    \includegraphics[width=\textwidth]{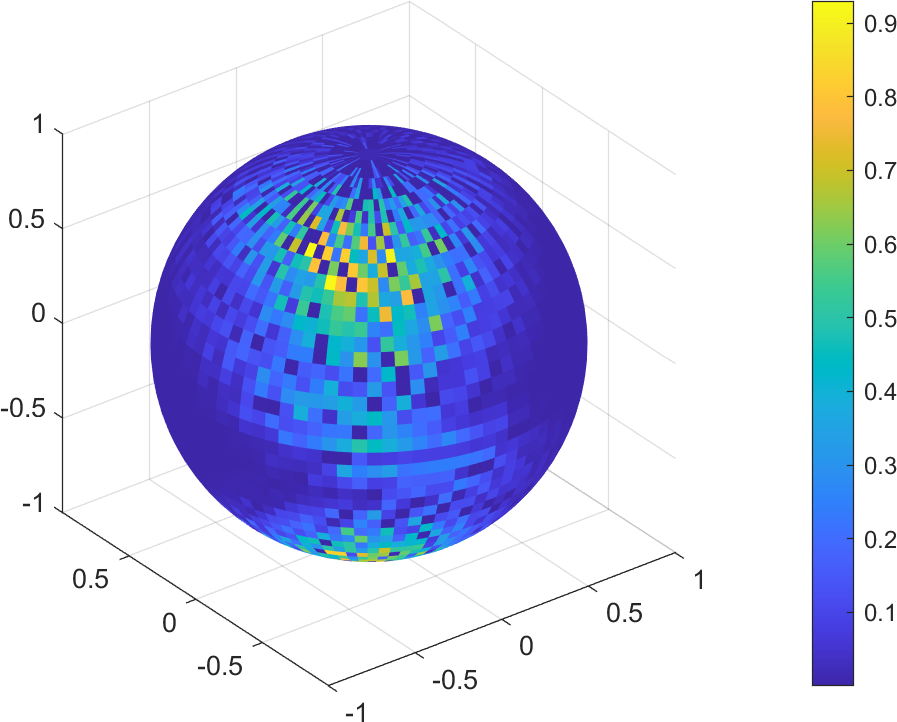}
    \caption{Unregularized 2-Wasserstein barycenter (33\,h)}
    \end{subfigure}
    
    \caption{CDT interpolation with $\delta=0.5$ of the vMF distributions from \autoref{fig:vmf}.}
    \label{fig:vmf_bary}
\end{figure}

As a reference, we consider the spherical 2-Wasserstein barycenter \eqref{eq:W-bary}
and its entropy-regularized counterpart \cite{PeyCut19}, whose computation with the Sinkhorn algorithm \cite{Kni08}
can be implemented efficiently, cf.\ \cite{BaQue22,BenCarCutNenPey15}. 
We apply the Python optimal transport library \cite{POT} for both, 
where the Sinkhorn algorithm uses the regularization parameter $0.01$ and a maximum number of 1000 iterations. 
In our example in \autoref{fig:vmf_bary}, the regularized 2-Wasserstein barycenter looks similar to the $\W$-CDT interpolation, 
while the unregularized barycenter is very noisy and takes very long to compute with a linear program solver.

\paragraph{Interpolation between vMF Distribution and a Mixture}
The $\W$-CDT interpolation of more evolved test functions, which are not symmetric, is depicted in \autoref{fig:smiley}.
We notice that the $\W$-CDT interpolation shows the \enquote{eyes} more clearly than the regularized 2-Wasserstein barycenter.
This might be caused by a too large regularization parameter of the Sinkhorn algorithm,
but when making it smaller the algorithm fails with a division by zero error.

\begin{figure}[htb]
    \centering
    \begin{subfigure}[t]{.4\textwidth} \centering
    \includegraphics[width=.8\textwidth]{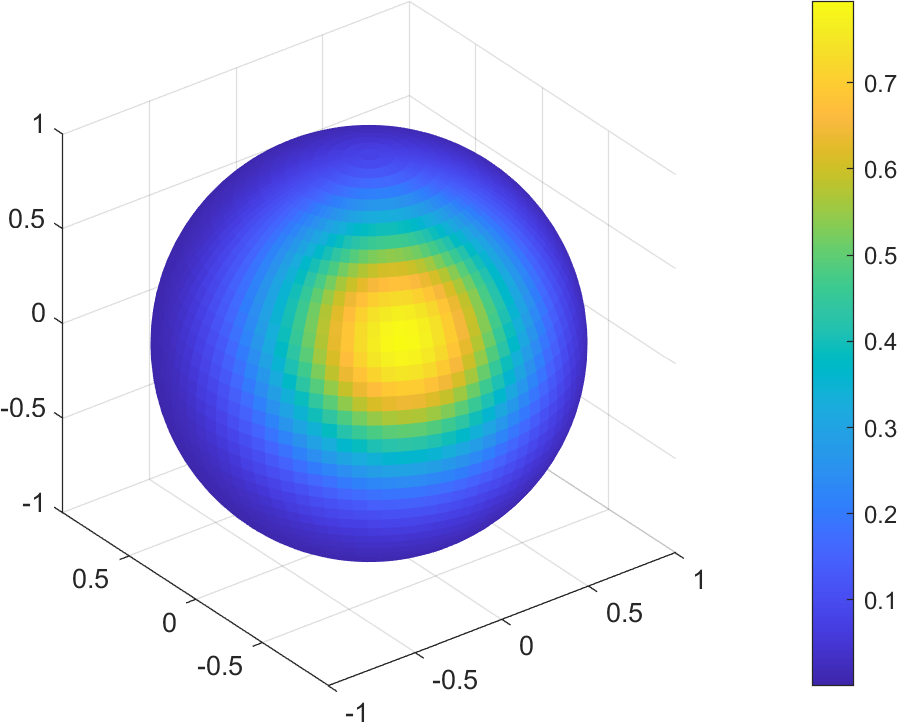}
    \caption{Density of vMF distribution $\mu$}
    \end{subfigure}
    \begin{subfigure}[t]{.4\textwidth} \centering
    \includegraphics[width=.8\textwidth]{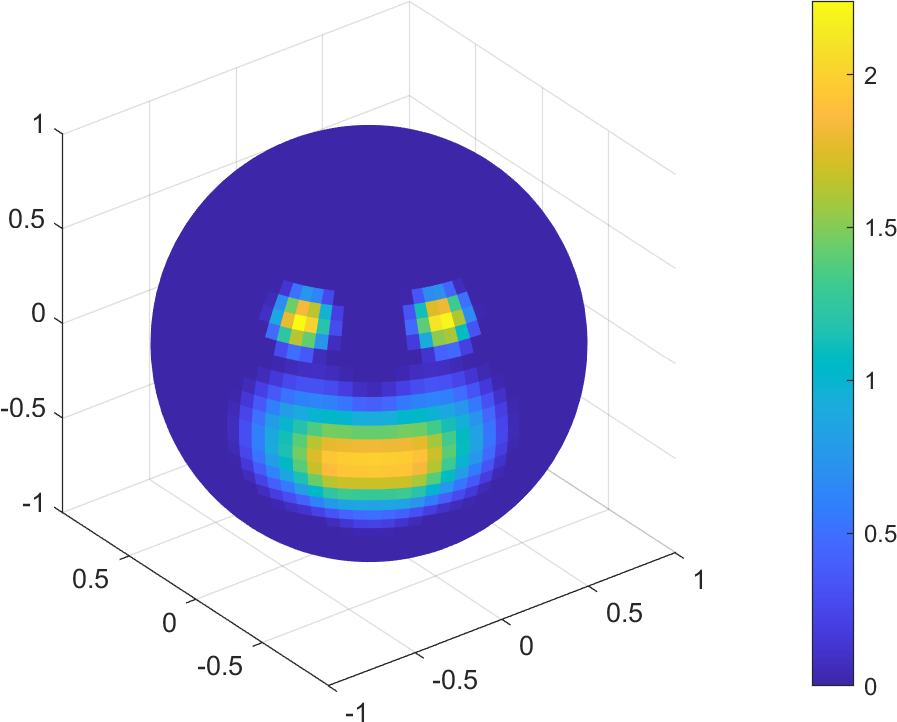}
    \caption{Density of $\nu$ (quadratic spline)}
    \end{subfigure}
    \begin{subfigure}[t]{.32\textwidth}
    \includegraphics[width=\textwidth]{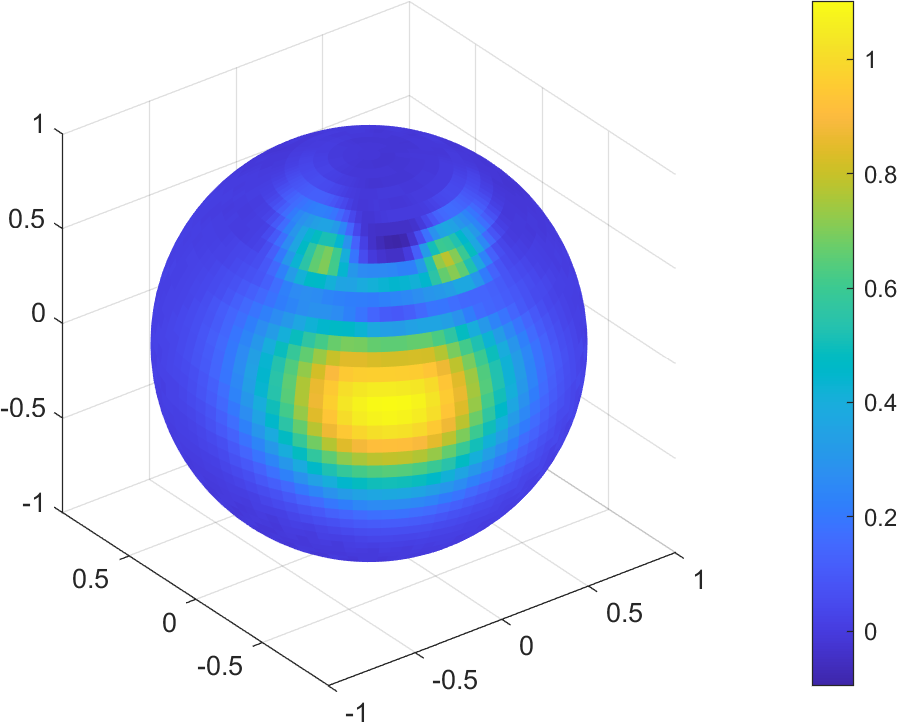}
    \caption{$\W$-CDT interpolation with Moore--Penrose pseudoinverse $\W^\dagger$}
    \end{subfigure}\hfill
    \begin{subfigure}[t]{.32\textwidth}
    \includegraphics[width=\textwidth]{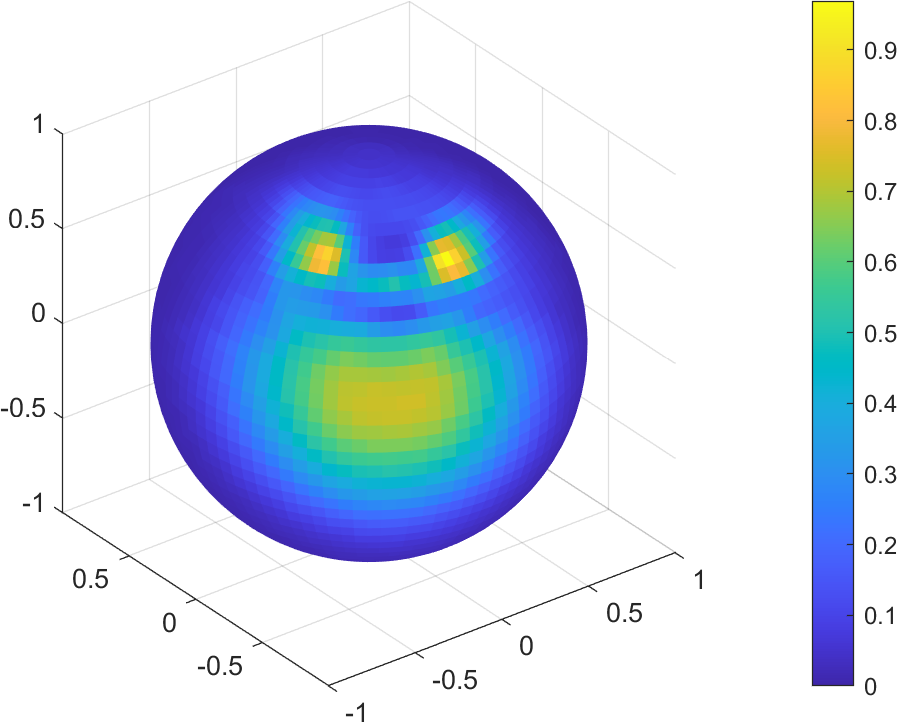}
    \caption{$\W$-CDT interpolation with regularized inverse}
    \end{subfigure} \quad
    \begin{subfigure}[t]{.32\textwidth}
    \includegraphics[width=\textwidth]{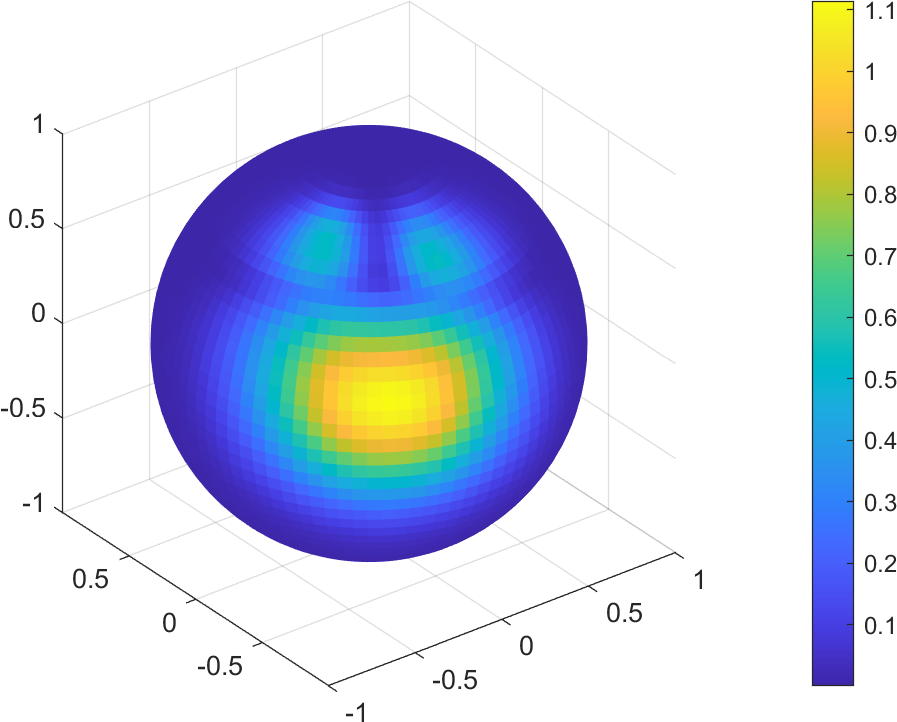}
    \caption{Regularized 2-Wasserstein barycenter }
    \end{subfigure} 
    
    \caption{CDT interpolation of density functions with $\delta=0.5$.}
    \label{fig:smiley}
\end{figure}

\subsection{Classification of Probability Measures}
\label{sec:classification}

In one dimension,
the cumulative distribution transform \eqref{eq:cdt}
is known to increase the separability 
between certain classes of probability measures.
If the considered classes are build from prototypes
using certain transformations 
like shifts or scalings,
the constructed classes are linearly separable 
in the CDT space \cite{MoCl23, PaKoSo18}.
For probability measures on multi-dimensional domains,
the separability of the CDT can be exploited by
transforming the considered measures to a series of line measures 
using the Radon or generalized Radon transform \cite{KolParRoh16,KolNadSimBadRoh19}.
If we replace the Radon transform by
the vertical slice or 
the normalized semicircle transform,
the procedure can be immediately transferred to measures on the sphere.

\begin{table}[htb]
    \caption{Created datasets to study the distinctiveness of linear SVMs
        with respect to the CDT of the vertical slice/semicircle transform
        respectively. 
        Each datum consists of a single 
        or the equally weighted mixture of two vMF distributions
        with fixed concentration $\kappa = 50$
        and randomly generated $\zb \eta \in \S^2$.}
    \label{tab:dataset}
    \centering
    \footnotesize
    \begin{tabular}{l p{0.3\linewidth} p{0.54\linewidth}}
        \toprule
        dataset 
        & 1st class 
        & 2nd class
        \\
        \midrule
        \#1 
        & single vMF distributions 
        & mixtures of two vMFs, means with fixed distance $\pi/2$
        \\
        \#2 
        & single vMF distributions
        & mixtures of two vMFs
        \\
        \#3 
        & single vMF distributions
        & mixtures of two vMFs, means mirrored at equatorial plane
        \\
        \#4 
        & single vMF distributions
        & mixtures of two vMFs, means mirrored at $\xi_3$ axis
        \\
        \#5 
        & mixtures of two vMFs, means\newline\hspace*{3.78pt} mirrored at equatorial plane
        & mixtures of two vMFs, means mirrored at $\xi_3$ axis
        \\
        \bottomrule
    \end{tabular}
\end{table}

\begin{figure}[htb]
    \centering
    \includegraphics[width=.32\textwidth]{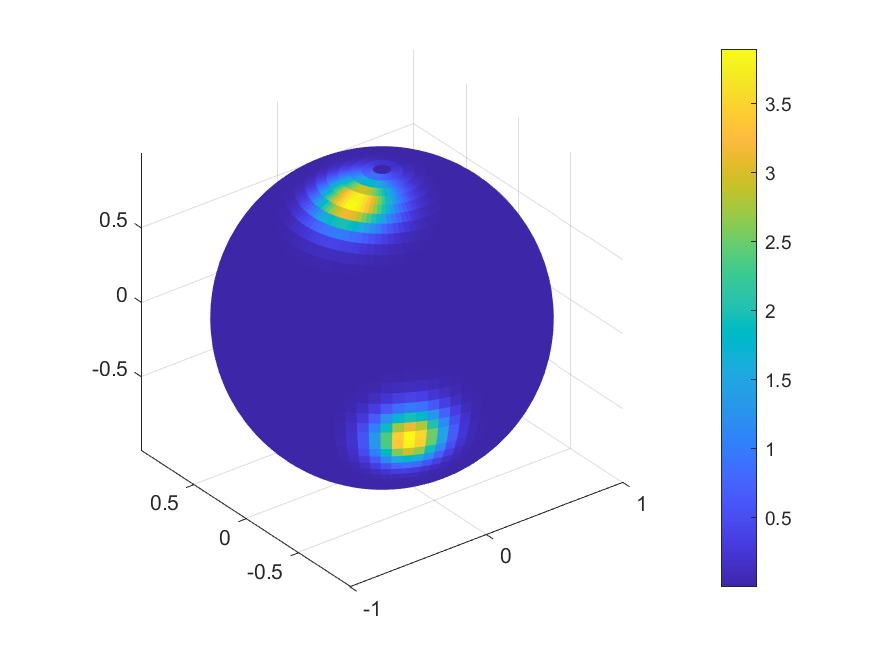}
    \includegraphics[width=.32\textwidth]{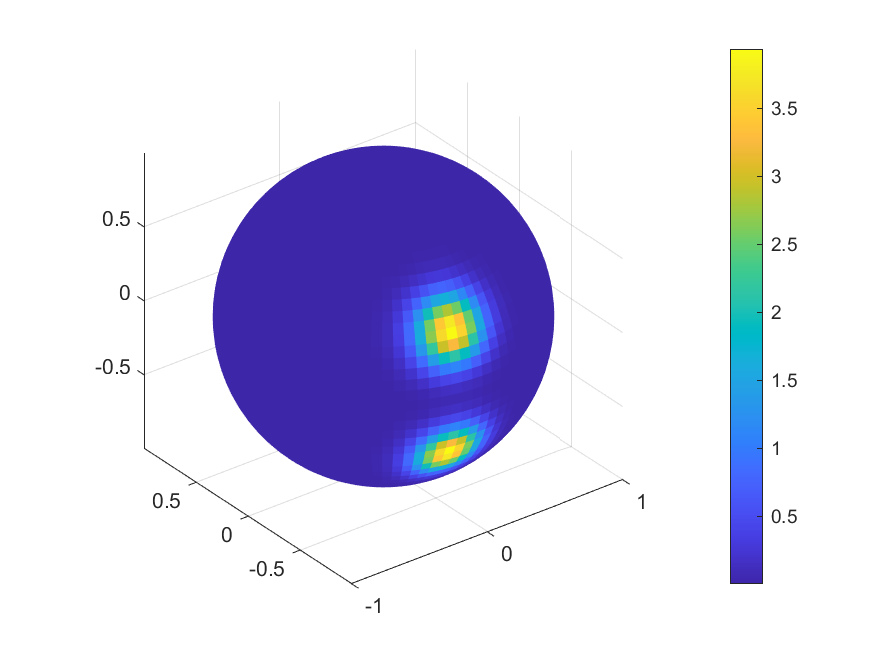}
    \includegraphics[width=.32\textwidth]{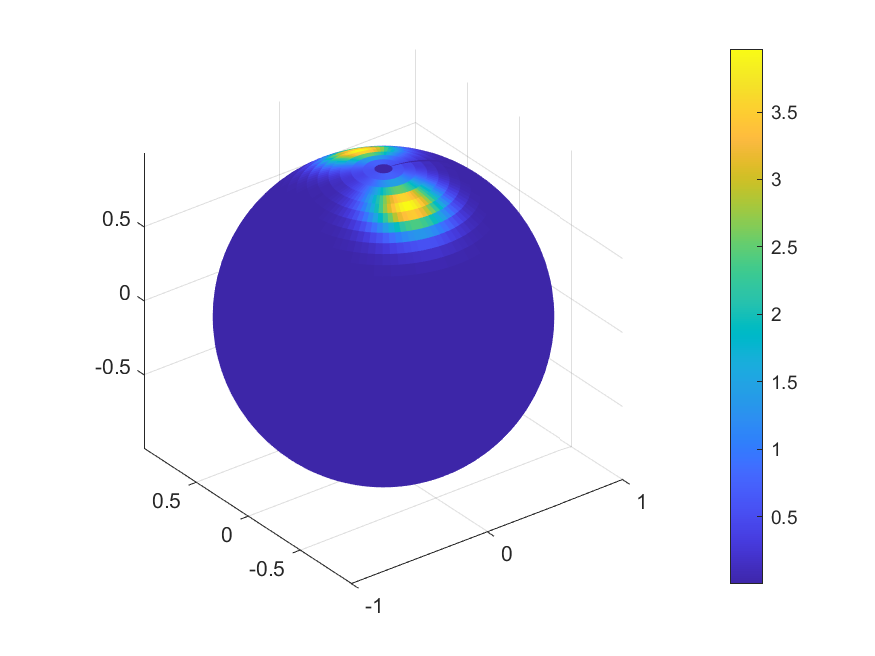}
    \caption{Examples from the generated datasets. From left to right:
    The means are generated with fixed distance $\pi/2$, 
    mirrored at the equator, 
    and mirrored at the $\xi_3$ axis.}
    \label{fig:vMF-mix}
\end{figure}

To show that the vertical slice and semicircle transform
can in principle improve the separability 
between different classes of probability measures,
we built five datasets consisting of 100 measures each.
Each datum represents a (discretized) single 
or mixture density function
of vMF distributions.
The concentration is always chosen as $\kappa = 50$.
The means are randomly generated on $\S^2$
satisfying the restrictions in \autoref{tab:dataset}.
All distributions in the mixtures are equally weighted.
\autoref{fig:vMF-mix} shows some examples of the different classes.
On the basis of these classes,
we train and test linear support vector machines (SVMs)
to study the linear separability after our spherical transformations.
To be more precise,
let $f_\nu$ be the density function of a specific datum. 
This specimen is now transformed into
\begin{equation}
    \bigl( \CDT_{u_\II}[\V_\psi f_\nu] \bigr)_{\psi \in \T}
    \quad\text{and}\quad
    \bigl( \sCDT_{u_\T}[ \W_{\alpha,\beta} f_\nu] \bigr)_{\sph(\alpha,\beta) \in \S^2},
\end{equation}
where $u_\bullet$ denotes the uniform measure.
For the numerical implementation,
we use the quadrature points $\bxi^m$ with $N=44$ from \autoref{sec:int-meas}.

Training and testing of the SVMs is here based on 10-fold cross-validations,
i.e.,
the dataset is divided in 10 subsets
containing equally many samples of each class,
the training is performed on 9 subsets,
and the testing on the remaining. 
The procedure is repeated 10 times 
such that each subset serves one time as testing set.
Before training, the dimension of the training set is reduced to 50
using a principle component analysis.
The success rates of the trained SVMs are given in \autoref{tab:succ-rate}.
Although the vertical slice transform cannot distinguish 
between the upper and lower hemisphere,
the $\V$-CDT approach yields high-quality linear separators
between the classes of single and mixture vMF densities.
The $\V$-CDT approach only fails in experiment \#3,
which is not surprising 
since the samples from the second class are seen
as single vMF densities by $\V$.
The $\W$-CDT approach is useless in \#1 and \#2,
but significantly increases the separability
between single and symmetrized vMF distributions.
These first simulations show that both spherical transforms
can increase the linear separability between certain classes.

\begin{table}
    \caption{Success rates of linear SVMs
    trained and tested directly on the density distributions (---/---),
    the CDT and vertical sliced transformed densities ($\V$-CDT),
    and the $\sCDT$ semicicle transformed densities ($\W$-CDT).
    Mean accuracy and standard deviation are computed 
    with respect to 10-fold cross-validations of the datasets in \autoref{tab:dataset}.}
    \label{tab:succ-rate}
    \centering
    \footnotesize
    \begin{tabular}{lccccc}
         \toprule
         dataset &  \#1 & \#2 & \#3 & \#4 & \#5\\
         \midrule
         ---/--- 
         & $0.555 \pm 0.123$
         & $0.545 \pm 0.145$
         & $0.570 \pm 0.079$
         & $0.585 \pm 0.085$
         & $0.570 \pm 0.136$\\
         $\V$-CDT  
         & $\bm{0.985} \pm 0.024$
         & $\bm{0.985} \pm 0.024$
         & $0.435 \pm 0.111$
         & $\bm{0.995} \pm 0.016$
         & $0.995 \pm 0.016$\\
         $\W$-CDT  
         & $0.465 \pm 0.097$
         & $0.565 \pm 0.125$
         & $\bm{0.865} \pm 0.085$
         & $0.945 \pm 0.037$
         & $\bm{1.000} \pm 0.000$\\
         \bottomrule
    \end{tabular}
\end{table}

\subsubsection*{Acknowledgements}

We thank Sophie Mildenberger for creating an illustration of the semicircle transform.
We gratefully acknowledge the funding by the German Research Foundation (DFG): STE 571/19-1,
project number 495365311, within the SFB F68: \enquote{Tomography Across the Scales}
as well as the BMBF under the project \enquote{VI-Screen} (13N15754).

\appendix

% --------------------------------------------------------------------------
\section{Proof of \autoref{thm:V_inj} } \label{app:thm:V_inj}
% --------------------------------------------------------------------------

Any measure $\mu\in \M_\mathrm{sym}(\S^2)$ is uniquely determined by its application on $C_\mathrm{sym}(\S^2)$,
since, by the definition \eqref{eq:Msym}, we have for any $f\in C(\S^2)$ that
$
\inn{\mu,f}
=
\frac12 \inn{\mu,f+\check f}
$
and $f+\check f \in C_\mathrm{sym}(\S^2)$,
where $\check f(\bxi) = f(\xi_1,\xi_2,-\xi_3)$.
Let $\mu,\nu \in \M_\mathrm{sym}(\S^2)$ such that $\V\mu=\V\nu$. 
Then we obtain for $g\in C(\T\times \II)$ by \autoref{prop:V-as-adj} that
\begin{equation}
  \inn{\mu, \V^*g}
  =
  \inn{\nu, \V^*g}.
\end{equation}
Hence, the claim $\mu=\nu$ holds true if 
$\{\V^*g: g\in C(\T\times \II)\}$ 
is a dense subset of $C_\mathrm{sym}(\S^2)$.
To show this, let $s>2$ and $f\in H_\mathrm{sym}^s(\S^2)$, which is dense in $C_\mathrm{sym}(\S^2)$.
Here we denote by $H_\mathrm{sym}^s(\S^2)$ the subset of even functions of the Sobolev space $H^s(\S^2)$, see \eqref{eq:Hs-norm}.
Since $\V$ is injective by \autoref{prop:V-inj},
we have 
$f = \V^* g$
if and only if
$\V f = \V\V^* g$.
In the following, we show that 
$$
g
\coloneqq
(\V\V^*)^{-1} \V f
$$
is continuous on $\T\times \II$, 
then we obtain $f = \V^* g$, which shows the assertion.
We proceed in a similar manner as for the proof of Sobolev's embedding theorem, cf. \cite[lem.\ 6.14]{Mic13}.

Recall the right singular functions $B_n^k$ of $\V$ from \eqref{eq:Bnk}.
Since $\V^*$ has the same singular functions as $\V$ and the conjugate singular values $\overline{\mathrm{v}_n^k} = \mathrm{v}_n^k$,
we have by \autoref{prop:V-inj}
\begin{equation}
  \V\V^* h 
  =
  \sum_{n=0}^{\infty} \sum_{\substack{k=-n\\n+k\text{ even}}}^{n}
  \abs{\mathrm{v}_n^k}^2\, \, \inn{h, B_n^k}_{L^2(\SO)}\, B_n^k
  ,\qquad \forall h\in L^2(\T\times \II).
\end{equation}
Hence, again by the singular value decomposition of $\V$, we have
\begin{equation} \label{eq:VV}
  (\V \V^*)^{-1} \V f
  =
  \sum_{n=0}^{\infty} \sum_{\substack{k=-n\\n+k\text{ even}}}^{n}
  \frac{1}{\mathrm{v}_n^k}\, \inn{f, Y_n^k}_{L^2(\S^2)}\, B_n^k.
\end{equation}
We want to show that the right hand side of \eqref{eq:VV} converges uniformly on $C(\T\times \II)$.
Let $(\psi,t) \in \T\times \II$.
As the Legendre polynomials satisfy $|P_n(t)|\le 1$ for all $t\in \II$, cf.\ \cite[§ 8.917]{GrRy14}, we have 
\begin{equation}
  \abs{B_n^k(\psi,t)}
  = \abs{\sqrt{\frac{2n+1}{4\pi}}\, P_n(t) \, \e^{\i k \psi}}
  \le \sqrt{\frac{2n+1}{4\pi}} .
\end{equation}
Let $N\in\N$.
Using the bound \eqref{eq:V-sv-bound} on the singular values of $\V$, we see that there exists $C>0$ such that
\begin{align} 
  &\quad
    \abs{\sum_{n=0}^{\infty} \sum_{\substack{k=-n\\n+k\text{ even}}}^{n}
  \frac{1}{\mathrm{v}_n^k}\, \inn{f, Y_n^k}_{L^2(\S^2)}\, B_n^k(\psi,t)
  -\sum_{n=0}^{N-1} \sum_{\substack{k=-n\\n+k\text{ even}}}^{n}
  \frac{1}{\mathrm{v}_n^k}\, \inn{f, Y_n^k}_{L^2(\S^2)}\, B_n^k(\psi,t)
  }
  \\
  &\le
    C \sum_{n=N}^{\infty} \sum_{\substack{k=-n\\n+k\text{ even}}}^{n}
  \left(n+\tfrac12\right)\, \abs{\inn{f, Y_n^k}_{L^2(\S^2)}}
  \\
  &\le
    C \sqrt{\sum_{n=N}^{\infty} \sum_{\substack{k=-n\\n+k\text{ even}}}^{n}
  \left(n+\tfrac12\right)^{2s}\, \abs{\inn{f, Y_n^k}_{L^2(\S^2)}}^2}
  \sqrt{\sum_{n=N}^{\infty} \sum_{{k=-n}}^{n}
  \left(n+\tfrac12\right)^{2-2s} },
\end{align}
where the last line follows by the Cauchy--Schwarz inequality.
In the last equation,
the first root converges to zero for $N\to\infty$ since the Sobolev norm $\norm {f}_{H^s(\S^2)}$ is finite,
and the term under the second root,
\begin{equation}
  \sum_{n=N}^{\infty} \sum_{k=-n}^{n}
  \left(n+\tfrac12\right)^{2-2s}
  =
  4 \sum_{n=N}^{\infty}
  \left(n+\tfrac12\right)^{3-2s},
\end{equation}
also converges since $s>2$.
Hence, the right-hand side of \eqref{eq:VV}, whose summands are continuous themselves,
converges uniformly to a continuous function on $\T\times \II$,
which finally implies that $g$ is continuous.
\hfill $\Box$

\section{Proof of \autoref{thm:svd2}}
\label{sec:W-svd}
1. First we show for $n\in\NN$, $k\in\{-n,\dots,n\}$, and $\bQ\in\SO$ that
\begin{equation} \label{eq:BY}
  \mathcal W Y_n^k (\bQ)
  =
  \sum_{j=-n}^n \lambda_n^j\, \overline{D_n^{k,j}(\bQ)},
\end{equation}
which implies \eqref{eq:B_svd}.
By \eqref{eq:B}, we have
\begin{equation} \label{eq:BYint}
  \mathcal W Y_n^k (\bQ)
  =
  \frac{1}{4\pi}
  \int_{0}^{\pi} Y_n^k(\bQ \sph(0,\vartheta))\,\sin\vartheta \d \vartheta
  \overset{\eqref{eq:Y_D}}=
  {\frac{1}{4\pi}}
  \sum_{j=-n}^n D_n^{j,k}(\bQ^\top) \int_{0}^{\pi} Y_n^j(\sph(0,\vartheta)) \sin\vartheta \d \vartheta.
\end{equation}
Noting that $D_n^{j,k}(\bQ^\top) = \overline{D_n^{-k,-j}(\bQ)}$ by \cite[§ 4.4]{Varsha88} and
performing the substitution $z=\cos\vartheta$, 
we see that \eqref{eq:BY} holds with 
\begin{equation} \label{eq:P_int}
  \lambda_n^j
  =
  \frac{1}{4\pi} \int_{-1}^{1} Y_n^j(\sph(0,\arccos z)) \d z
  \overset{\eqref{eq:Y}}{=}
  \frac{1}{4\pi} \sqrt{\frac{2n+1}{4\pi} \frac{(n-j)!}{(n+j)!}} \int_{-1}^{1} P_n^j(z) \d z.
\end{equation}
If $n=0$, then also $j=0$ and we have $P_0^0 = 1,$
which implies that $\lambda_0^0 = 2 (4\pi)^{-1/2}$.
Let $n\in\N$.
If $j=0$, then $P_n^0$ is the Legendre polynomial of degree $n$
and thus we have $\int_{-1}^1 P_n^0(z) \d z = 0$ for $n\ge1$.
If $n+j$ is odd, then $P_n^j$ is an odd function and hence its integral \eqref{eq:P_int} vanishes.
Let us compute \eqref{eq:P_int} for $n=j \in\NN$.
The substitution $z = \cos\vartheta$ and \eqref{eq:Pnk} yield
\begin{align}
  \int_{-1}^1 P_n^n(z) \d z
  &=
    2(-1)^n (2n-1)!! \int_{0}^1 (1-z^2)^{n/2} \d z
  \\&
  =
  2(-1)^n (2n-1)!! \int_{0}^{\pi/2} (\sin\vartheta)^{n+1} \d\vartheta
  \\& \label{eq:int_Pnn}
  =
  (-1)^n (2n-1)!! \frac{n!!}{(n+1)!!}
  \begin{cases}
    2: & n \text{ even},\\
    {\pi}: & n \text{ odd},
  \end{cases}
\end{align}
where the last equality follows by \cite[§3.621]{GrRy14}.
Let $n+j$ be even and $n\ge2$, $j\ge0$.
We are going to use two recurrence relations from \cite[§8.731]{GrRy14}.
First, we compute the integral of the relation
$$
(n-j) P_{n}^j(z)
=
(z^2-1) \partial_z P_{n-1}^j(z) + n z P_{n-1}^j(z).
$$
Using integration by parts and noting that $ P_{n-1}^j(1) = -P_{n-1}^j(-1)$ yields
\begin{equation} \label{eq:P-recursion1}
  (n-j) \int_{-1}^1 P_{n}^j(z) \d z
  =
  (n-2) \int_{-1}^1 z P_{n-1}^j(z) \d z.
\end{equation}
Second, inserting the integral of the recurrence relation 
$$
(2n-1) z P_{n-1}^j(z)
=
(n-j) P_{n}^j(z) + (n+j-1) P_{n-2}^j(z)
$$
into \eqref{eq:P-recursion1} results in
$$
(n-j) \int_{-1}^1 P_{n}^j(z) \d z
=
\frac{n-2}{2n-1}
\int_{-1}^1 \left( (n-j) P_{n}^j(z) + (n+j-1) P_{n-2}^j(z) \right) \d z
$$
and thus
$$
(n-j) ({n+1}) \int_{-1}^1 P_{n}^j(z) \d z
=
(n-2)(n+j-1) \int_{-1}^1 P_{n-2}^j(z) \d z.
$$
Hence, we obtain by \eqref{eq:int_Pnn} that
\begin{align*}
  \int_{-1}^1 P_{n}^j(z) \d z
  &=
    \frac{(n-2)(n+j-1)}{(n-j)(n+1)} \int_{-1}^1 P_{n-2}^j(z) \d z
  \\&
  =
  \frac{(n-2)!!}{(j-2)!!} \frac{(n+j-1)!!}{(2j-1)!!} \frac{1}{(n-j)!!} \frac{(j+1)!!}{(n+1)!!} \int_{-1}^1 P_{j}^j(z) \d z
  \\&
  =
  (-1)^j \frac{j!! (n-2)!! (n+j-1)!!}{(j-2)!! (n-j)!! (n+1)!!} 
  \begin{cases}
    2: & n \text{ even},\\
    {\pi}: & n \text{ odd}.
  \end{cases}
\end{align*}
Together with \eqref{eq:P_int} and \eqref{eq:Pn-k}, this implies \eqref{eq:B_sv}.
\\[1ex]
2. We show that $\{Z^k_n: n \in \NN, k = -n,\ldots,n\}$ forms an orthonormal system in $L^2(\SO)$.
The orthogonality follows from the orthogonality relation \eqref{eq:D_ortho} of the rotational harmonics and the fact that $Z_n^k$ for different indices $(n,k) \neq (n',k')$ contains disjoint linear combinations of rotational harmonics.
Since $\lambda_n^n\neq0$, this also yields that $\mathrm{w}_n\neq0$ for any $n\in\NN$, and hence the normalization follows by definition of $Z_n^k$.
\pagebreak[1]\\[1ex] 
3. We show the bound \eqref{eq:mu-bound}.
Let $n\in\NN$.
The squared singular values of the operator $\mathcal W$ are
\begin{equation} \label{eq:mu1}
  (\mathrm{w}_n)^2
  =
  \norm{\mathcal W Y_n^k}_{L^2(\S^2)}^2
  =
  {\sum_{j=-n}^n
    \abs{\lambda_n^j}^2 \norm{D_n^{k,j}}^2_{L^2(\SO)} }
  \overset{\eqref{eq:D_ortho}}{=}
  {\sum_{j=1}^n
    \abs{\lambda_n^j}^2 \frac{16\pi^2}{2n+1} },
\end{equation}
where we used the identity $\lvert\lambda_n^j\rvert = \lvert{\lambda_n^{-j}}\rvert$.
We have by \eqref{eq:B_sv} that
\begin{align}
  (\mathrm{w}_n)^2
  &=
    \frac{1}{4\pi}\, 
    \sum_{\substack{j=1\\n+j\text{ even}}}^n
  \frac{(n-j)!}{(n+j)!} 
  \left(\frac{j\, (n-2)!!\, (n+j-1)!!}{ (n-j)!!\, (n+1)!!} \right)^2
  \begin{cases}
    4: & n \text{ even},\\
    {\pi}^2: & n \text{ odd}.
  \end{cases}
  \\
  &=
    \frac{1}{4\pi} 
    \left(\frac{ (n-2)!!}{ (n+1)!!} \right)^2
    \sum_{\substack{j=1\\n+j\text{ even}}}^n
  j^2\,
  \frac{(n-j-1)!!}{(n-j)!!}\, 
  \frac{(n+j-1)!!}{ (n+j)!!} 
  \begin{cases}
    4: & n \text{ even},\\
    {\pi}^2: & n \text{ odd}.
  \end{cases}
\end{align}
We use the fact from \cite[p.\ 9]{HiPoQu18} that 
$$
\frac{(2m-1)!!}{(2m)!!}
= {c_m} ({2m+1})^{-1/2},
\quad
\text{where }
\sqrt{\tfrac{2}{\pi}} \le
c_{m}
\le 1
,\qquad \forall m\in\N.
$$
We perform the proof for the case that $n=2m$ is even, the case of odd $n$ is completely analogous. 
We have 
\begin{align}
  (\mathrm{w}_{2m})^2
  &=
    \frac{1}{\pi} 
    \left(\frac{ (2m-2)!!}{ (2m+1)!!} \right)^2
    \sum_{j=1}^m
    (2j)^2\,
    \frac{(2m-2j-1)!!}{(2m-2j)!!}\, 
    \frac{(2m+2j-1)!!}{ (2m+2j)!!} 
  \\
  &= 
    \frac{ (2m+1)}{\pi\, c_m^2\, (2m+1)^2\,(2m)^2} 
    \sum_{j=1}^m
    \frac{4j^2\, c_{m-j}\, c_{m+j}}{\sqrt{2m-2j+1}\, \sqrt{2m+2j+1}} 
  \\
  &=
    \frac{ 1}{\pi\, c_m^2\, (2m+1)\,(2m)^2} 
    \sum_{j=1}^m
    \frac{4j^2\, c_{m-j}\, c_{m+j}}{\sqrt{(2m+1)^2-(2j)^2}} .
\end{align}
Taking into account the bounds on $c_m$ and noting that the summands increase monotonic with $j$,
we replace the sum by an integral plus the last summand and obtain the upper bound
\begin{align}
  (\mathrm{w}_{2m})^2
  &\le
    \frac{ 1}{8 (2m+1)\,m^2} 
    \left(\int_{0}^{m+1/2}
    \frac{(2x)^2}{\sqrt{(2m+1)^2-(2x)^2}} \d x
    + \frac{4m^2}{\sqrt{4m+1}}\right)
  \\
  &=
    \frac{ 1}{8 (2m+1)\,m^2} 
    \left(\frac{\pi(2m+1)^2}{8} + \frac{4m^2}{\sqrt{4m+1}}\right)
    \in \mathcal O(m^{-1}).
\end{align}
For the lower bound, we analogously see that
\begin{align}
  (\mathrm{w}_{2m})^2
  &\ge
    \frac{ 1}{2\pi^2\, (2m+1)\,m^2} 
    \sum_{j=1}^m
    \frac{4j^2}{\sqrt{(2m+1)^2-(2j)^2}} 
    \\&
    \ge
    \frac{ 1}{2\pi^2\, (2m+1)\,m^2} 
    \int_{1}^m
    \frac{4j^2 \d j}{\sqrt{(2m+1)^2-(2j)^2}}
  \\&=
  \frac{ 1}{2\pi^2\, (2m+1)\,m^2} 
  \left(\frac{(2m+1)^2}{4}  \arcsin\left( \frac{2 m}{2 m+1}\right) -\frac{m}{2} \sqrt{4 m+1} \right)
\end{align}
can be bounded from below by a positive multiple of $m^{-1}$ for $m\to\infty$. \hfill $\Box$

% ---------------------------------------------------------------
\section{Proof of \autoref{thm:B_inj}} \label{app:abc}
% ---------------------------------------------------------------
Let $\mu,\nu \in \M(\S^2)$ such that $\mathcal W \mu = \mathcal W \nu$.
By \autoref{prop:W-adj-meas}, we have 
\begin{equation}
  \inn{\mu, \mathcal W^* g}
  =
  \inn{\nu, \mathcal W^* g}
  ,\qquad \forall g\in C(\SO).
\end{equation}
The claim holds if we can show that $\{\mathcal W^*g: g\in C(\SO)\}$ is a dense subset of $C(\S^2)$.
Let $f\in H^s(\S^2)$ with $s>2$, cf.\ \eqref{eq:Hs-norm},
which is dense in $C(\S^2)$, see \cite[p.\ 121]{AtHa12}.
We show that $g\coloneqq (\mathcal W \mathcal W^*)^{-1} \mathcal W f \in C(\SO)$,
which also implies $f = \mathcal W^*g$ by the injectivity of $\mathcal W$.
We proceed analogously to the proof of Sobolev's embedding theorem \cite[p.\ 122]{AtHa12}.
Since $\mathcal W^*$ has the same singular functions as $\mathcal W$ and the singular values $\mathrm{w}_n = \overline{\mathrm{w}_n}$,
\autoref{thm:B_inj} implies 
$$
g
=
\sum_{n\in\NN} \sum_{k=-n}^{n}
\frac{1}{\mathrm{w}_n} \inn{f,Y_n^k} Z_n^k.
$$
Let $\bQ\in\SO$.
We have by \eqref{eq:Z} and the Cauchy--Schwarz inequality
\begin{equation} \label{eq:Z-bound}
  \abs{Z_n^k(\bQ)}^2
  \le
  \abs{\mathrm{w}_n}^{-2}\, 
  \bigg(\sum_{j=-n}^n  \abs{\lambda_n^j} \abs{D_n^{k,j}(\bQ)} \bigg)^2
  \overset{\eqref{eq:mu1}}{\le} 
  \frac{2n+1}{16\pi^2}
  {\sum_{j=-n}^n \abs{D_n^{k,j}(\bQ)}^2}.
\end{equation}
Again by the Cauchy--Schwarz inequality, we obtain
\begin{align}
  \abs{g(\bQ)}^2
  &\le
    \bigg(\sum_{n\in\NN} \sum_{k=-n}^{n}
    \abs{\mathrm{w}_n}^{-1}\, \abs{\inn{f,Y_n^k}}\, 
    \abs{Z_n^{k}(\bQ)}\bigg)^2
  \\
  &\le 
    {\sum_{n\in\NN} \sum_{k=-n}^{n}
    (n+\tfrac12)^{2s}\, \abs{\inn{f,Y_n^k}}^2}
    {\sum_{n\in\NN} \sum_{k=-n}^{n}
    (n+\tfrac12)^{-2s}\, \abs{\mathrm{w}_n}^{-2}\, 
    \abs{Z_n^{k}(\bQ)}^2}
  \\
  &\overset{\eqref{eq:Z-bound}}{\le} 
    \norm{f}_{H^s(\S^2)}^2
    {\sum_{n\in\NN} 
    (n+\tfrac12)^{-2s}\, \abs{\mathrm{w}_n}^{-2}\, 
    \frac{2n+1}{16\pi^2}
    \sum_{k=-n}^{n}\sum_{j=-n}^n \abs{D_n^{k,j}(\bQ)}^2}.
    \label{eq:sum-sobolev}
\end{align}
Using the addition formula 
$
\sum_{j,k=-n}^n 
\lvert{D_n^{k,j}(\bQ)}\rvert^2
=
n+1,
$
see \cite[p.\ 17]{hiediss07},
and the bound \eqref{eq:mu-bound} of $\mathrm{w}_n$,
we see that the last sum converges uniformly in $\bQ$.
Since the basis functions $Z_n^k$ are continuous, this implies the continuity of $g$, which proves the assertion.
\hfill $\Box$

%\bibliographystyle{abbrvurl}
%\bibliography{literature}

\end{document}